\numberwithin{equation}{section}
\theoremstyle{plain}
\newtheorem{theorem}{Theorem}[section]
\newtheorem{lemma}[theorem]{Lemma}
\newtheorem{proposition}[theorem]{Proposition}
\theoremstyle{definition}
\newtheorem{definition}[theorem]{Definition}
\newtheorem{remark}[theorem]{Remark}
\let\c@equation\c@theorem 
\newcommand{\AAA}{\mathbb{A}}
\newcommand{\BB}{\mathbb{B}}
\newcommand{\CC}{\mathbb{C}}
\newcommand{\DD}{\mathbb{D}}
\newcommand{\EE}{\mathbb{E}}
\newcommand{\FF}{\mathbb{F}}
\newcommand{\GG}{\mathbb{G}}
\newcommand{\HH}{\mathbb{H}}
\newcommand{\II}{\mathbb{I}}
\newcommand{\JJ}{\mathbb{J}}
\newcommand{\KK}{\mathbb{K}}
\newcommand{\LL}{\mathbb{L}}
\newcommand{\MM}{\mathbb{M}}
\newcommand{\NN}{\mathbb{N}}
\newcommand{\OO}{\mathbb{O}}
\newcommand{\PP}{\mathbb{P}}
\newcommand{\QQ}{\mathbb{Q}}
\newcommand{\RR}{\mathbb{R}}
\newcommand{\SSS}{\mathbb{S}}
\newcommand{\TT}{\mathbb{T}}
\newcommand{\UU}{\mathbb{U}}
\newcommand{\VV}{\mathbb{V}}
\newcommand{\WW}{\mathbb{W}}
\newcommand{\XX}{\mathbb{X}}
\newcommand{\YY}{\mathbb{Y}}
\newcommand{\ZZ}{\mathbb{Z}}
\newcommand{\LIST}{\mathcal{LIST}}
\DeclareMathOperator{\Ext}{Ext}
\DeclareMathOperator{\gr}{gr}
\DeclareMathOperator{\Aut}{Aut}
\DeclareMathOperator{\GKdim}{GKdim}
\DeclareMathOperator{\Spec}{Spec}
\DeclareMathOperator{\End}{End}
\DeclareMathOperator{\Hom}{Hom}
\begin{document}

\title
{Double Extension Regular Algebras of Type (14641)}

\author{James J. Zhang and Jun Zhang}

\address{Department of Mathematics, Box 354350,
University of Washington, Seattle, WA 98195, USA}

\email{zhang@math.washington.edu}
\email{junz@math.washington.edu}

\begin{abstract}
We construct several families of Artin-Schelter regular 
algebras of global dimension four using double Ore extension 
and then prove that all these algebras are strongly noetherian, 
Auslander regular, Koszul and Cohen-Macaulay domains.
Many regular algebras constructed in the paper are new and 
are not isomorphic to either a normal extension or an Ore 
extension of an Artin-Schelter regular algebra of global 
dimension three. 
\end{abstract}

\subjclass[2000]{16W50, 14A22, 16A62, 16E70}

\keywords{Ore extension, double extension, global dimension, 
Artin-Schelter regular, Auslander regular, Cohen-Macaulay}

\maketitle

%\tableofcontents

\setcounter{section}{-1}
\section{Introduction}
\label{xxsec0}

One of the most important projects in noncommutative algebraic 
geometry is the classification of noncommutative projective 
3-spaces, or quantum ${\mathbb P}^3$s. An algebraic version of
this project is the classification of Artin-Schelter regular 
algebras of global dimension four. There has been extensive 
research on Artin-Schelter regular algebras of global dimension 
four; and many families of regular algebras have 
been discovered in recent years \cite{LSV, LPWZ, Sk1, Sk2, SS, 
VV1, VV2, VVW, Va1, Va2}. The main goal of this paper is 
to construct and study a large class of new Artin-Schelter 
regular algebras of dimension four, called double Ore extensions.

The notion of double Ore extension (or double extension for the
rest of the paper) was introduced in \cite{ZZ}. A double 
extension of an algebra $A$ is denoted by 
$A_P[y_1,y_2;\sigma, \delta,\tau]$ and the meanings of DE-data 
$\{P,\sigma,\delta, \tau\}$ will be reviewed in Section 
\ref{xxsec1}. A more general way of building regular algebra 
of dimension four was presented by Caines in his Thesis \cite{Ca}. 
In principle, all double extensions in this paper are also 
``skew-polynomial rings'' in the sense of Caines. The idea
of double extensions were used by Patrick \cite{Pa} and 
Nyman \cite{Ny} in a different context. As a generalization of 
the classical Ore extension \cite{Or} the method of double Ore 
extension is simple and effective. By using the double extension
we construct regular algebras of global dimension four explicitly.

Many researchers have noted that the data associated to 
the whole class of regular algebras of dimension four are 
tremendous; and one needs to introduce some invariants to 
distinguish these algebras. For simplicity we only consider 
regular algebras of dimension four that are generated 
in degree 1. By the work of \cite{LPWZ}, such an algebra $B$ is 
generated by either 2, or 3, or 4 elements and the projective 
resolution of the trivial module $k_B$ is given in 
\cite[Proposition 1.4]{LPWZ}. When $B$ is generated by 4 elements, 
then the projective resolution of the trivial module $k_B$ is
of the form
$$0\to B(-4)\rightarrow B(-3)^{\oplus 4}\rightarrow B(-2)^{\oplus 6}
\rightarrow B(-1)^{\oplus 4}\rightarrow B\rightarrow k_B\to 0.
$$
Suggested by the form of the above resolution, we say such an 
algebra is {\it of type (14641)}. In this paper we mainly 
deal with algebras of type (14641). An algebra of type (14641)
is Koszul. 

With some help of Mathematical software Maple, we are able to 
classify all double extensions $A_{P}[y_1,y_2;\sigma]$ (with 
$\delta=0$ and $\tau=(0,0,0)$) of type (14641). Since Ore extensions 
and normal extensions of regular algebras of dimension three are 
well-understood and well-studied \cite{LSV}, we omit some of 
those from our classification. Our then ``partial'' classification 
consists of 26 families of regular algebras of type (14641); and 
it provides enough information to prove part (a) of the following 
theorem. 

\begin{theorem}
\label{xxthm0.1}
Let $B$ be a connected graded algebra generated by 
four elements of degree 1. Suppose that $B$ is a double extension 
$A_P[y_1,y_2;\sigma,\delta,\tau]$ where $A$ is an Artin-Schelter 
regular algebra of dimension 2.
\begin{enumerate}
\item
$B$ is a strongly noetherian, Auslander regular and 
Cohen-Macaulay domain.
\item
$B$ is of type (14641). As a consequence, $B$ is Koszul. 
\item
If $B$ is not isomorphic to an Ore extension of an 
Artin-Schelter regular algebra
of dimension three, then the trimmed double
extension $A_P[y_1,y_2;\sigma]$ (by setting $\delta=0$ and
$\tau=(0,0,0)$) is isomorphic to one of 26 
families listed in Section 4.
\end{enumerate}
\end{theorem}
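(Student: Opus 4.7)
The plan is to handle parts (a) and (b) by combining general double extension results from \cite{ZZ} with a Hilbert series computation, and then address (c) as an explicit classification aided by Maple.

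For part (b), I would first determine the Hilbert series. Since $A$ is AS-regular of dimension 2 generated in degree 1, we have $H_A(t)=1/(1-t)^2$. A double extension adjoins two degree-1 variables with a PBW-style basis $\{x^{\alpha}y_1^{i}y_2^{j}\}$, so $H_B(t)=H_A(t)/(1-t)^2=1/(1-t)^4$. By the general theory of double extensions, $B$ has global dimension $\gldim A + 2 = 4$ and is AS-Gorenstein, so the minimal graded free resolution of $k_B$ has length $4$, begins with $B$, ends with $B(-s)$ for some $s$, and its first syzygy is $B(-1)^{\oplus 4}$ since $B$ has exactly $4$ degree-$1$ generators. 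The Euler relation $H_B(t)\sum_i(-1)^i P_i(t)=1$ forces $\sum_i(-1)^iP_i(t)=(1-t)^4$; matching this against the classification of possible resolutions in \cite[Proposition~1.4]{LPWZ} pins the resolution to the linear shape $(1,4,6,4,1)$, whence $B$ is Koszul.

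For part (a), the properties are inherited from $A$ through the double extension construction of \cite{ZZ}. The domain property for the trimmed extension $A_P[y_1,y_2;\sigma]$ is a direct consequence of the PBW basis, and then the general $(\delta,\tau)$ deformation preserves it since its associated graded is the trimmed algebra. Strong noetherianness and Auslander regularity pass from $A$ to $B$ by the same filtration plus the analogue for double extensions of the classical Ore-extension results; the Cohen-Macaulay property follows once Auslander regularity and the Hilbert series $1/(1-t)^4$ are in place (since $\GKdim B = 4 = \injdim B$ and the double extension preserves GK-pure behavior).

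For part (c), the classification is reduced to solving the defining system from \cite{ZZ} for DE-data $(P,\sigma)$ when $\delta=0$ and $\tau=(0,0,0)$. Here $A$ is one of the two AS-regular dimension-2 algebras generated in degree 1 (quantum or Jordan plane), and $\sigma\colon A\to M_2(A)$ together with $P=(p_{ij})$ must satisfy the compatibility and regularity constraints translating into a polynomial system in the structure constants. I would feed this system into Maple, split by the type of $A$ and by the vanishing/non-vanishing of key parameters, solve each branch, and then quotient by change-of-variables and by the sub-locus where $B$ is an iterated Ore extension of an AS-regular dimension-3 algebra. Collecting the surviving branches is expected to yield exactly the 26 families listed in Section~4.

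The main obstacle is part (c): the parameter space of DE-data is large and the polynomial system has many branches, so both the exhaustive enumeration and the identification of the Ore-extension locus require careful symbolic computation and a disciplined case analysis to guarantee that no family is missed or counted twice. Parts (a) and (b), by contrast, are essentially formal once the general machinery of \cite{ZZ} and the Hilbert series are in hand.
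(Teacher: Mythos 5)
Your outline of parts (b) and (c) matches the paper: (b) is essentially the citation of \cite[Proposition 1.4]{LPWZ} (your Hilbert-series/Gorenstein symmetry argument is the content behind that citation), and (c) is exactly the paper's program of solving the constraint system for $(P,\sigma)$ by Maple, reducing modulo linear and twist equivalence, and discarding the Ore-extension locus (Proposition \ref{xxprop4.4}, Sections 3--4).

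The genuine gap is in part (a). You assert that strong noetherianity, Auslander regularity and the Cohen-Macaulay property ``pass from $A$ to $B$'' by ``the analogue for double extensions of the classical Ore-extension results.'' No such analogue exists: \cite{ZZ} (Theorem \ref{xxthm5.1} here) yields only Artin-Schelter regularity of a double extension, and it is not known in general that a double extension of a (strongly) noetherian algebra is noetherian, let alone Auslander regular or Cohen-Macaulay. This is precisely why the paper needs part (c) \emph{before} part (a): after using \cite[Lemma 4.4]{ZZ} and Lemma \ref{xxlem5.18} to reduce to the trimmed extension $A_P[y_1,y_2;\sigma]$, one either lands in the iterated Ore extension case (Lemma \ref{xxlem5.2}) or in one of the 26 families $\AAA$--$\ZZ$, and for those families the properties are verified case by case in Section 5 (Propositions \ref{xxprop5.3}, \ref{xxprop5.11}--\ref{xxprop5.16}, Theorem \ref{xxthm5.17}) by exhibiting explicit normal elements, passing to Veronese subrings, and exploiting graded twists and $\Sigma$-$M$-duality; the paper states explicitly that no uniform method is known. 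Declaring (a) ``essentially formal'' therefore skips the main body of the proof. A second, smaller gap: the domain property is not ``a direct consequence of the PBW basis'' (a free $A$-module basis does not preclude zero divisors involving $y_1,y_2$); in the paper it comes from regularity together with the noetherian property, via the standard fact for noetherian Artin-Schelter regular algebras of dimension four (cf.\ the use of \cite[Theorem 3.9]{ATV2} in Theorem \ref{xxthm2.9}), so it too ultimately rests on the case-by-case work of Section 5.
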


These 26 families of algebras in part (b) are labeled by 
${\mathbb A}, {\mathbb B}, \cdots, {\mathbb Z}$. Let
$\LIST$ denote the class consisting of all algebras in
the families from $\AAA$ to $\ZZ$. 
Regular algebras of dimension three are well-understood 
\cite{AS,ATV1,ATV2}. Hence, in theory, Ore extensions of regular 
algebra of dimension three are well understood. That is our 
rational to omit Ore extensions in part (b). Besides, there 
are too many double extensions 
$A_{P}[y_1,y_2,\sigma,\delta,\tau]$ to list if we want to 
include all Ore extensions. On the other hand, in these 26 
families, many of the double extensions $A_P[y_1,y_2;\sigma]$ 
are still Ore extensions. The reason that we do not remove 
those Ore extensions is that there might be nonzero $\delta$ 
and $\tau$ such that $A_P[y_1,y_2;\sigma,\delta,\tau]$ 
(with the same $(P,\sigma)$) is not an Ore extension. In other 
words our classification is basically the classification of 
$(P,\sigma)$ so that $A_{P}[y_1,y_2;\sigma,\delta,\tau]$ is 
not an Ore extension for possible $(\delta,\tau)$. 

We want to remark that the software Maple is used in an 
elementary way only to reduce the length of the computation 
and all computation can be done by hand without assistant of 
Maple. Further, the regularity and other properties of every 
algebra is verified rigorously by other means. 

As a consequence of Theorem \ref{xxthm0.1}, for any algebra 
$A$ in the $\LIST$, the scheme of point modules (respectively, 
line modules) over $A$ is a genuine commutative projective
scheme \cite[Corollary E4.11]{AZ}. It would be very interesting to work out
geometric properties and geometry invariants (such as 
the point-scheme and the line-scheme) associated to $A$. 
There are also various algebraic questions we do not pursue
in this paper. For example, for any algebra $A$ in the 
$\LIST$, one may ask:
\begin{enumerate}
\item[(a)]
Is $A$ primitive? Does $A$ satisfies a polynomial identity?
What is the prime spectrum $\Spec A$?
\item[(b)]
What is the group of graded algebra automorphisms of $A$
(denoted by $\Aut(A)$)? Is there a non-trivial 
finite subgroup $G\subset \Aut(A)$ such that
$A^G$ is Artin-Schelter regular?
\item[(c)]
What invariants can be defined for the quotient division 
algebra of $A$? Is the quotient division algebra of $A$ 
always generated by two elements?
\end{enumerate}
Some of these questions are easy for each individual algebra; 
however, it could be a challenge to find a general approach that 
works for all algebras. Question (b) leads to finding more 
regular algebras of dimension four that may not generated in 
degree 1. 

Double extensions appear naturally in some slightly different 
contexts about regular algebras of type (14641). 

\begin{theorem}
\label{xxthm0.2}
Let $B$ be a noetherian Artin-Schelter regular algebra of type 
(14641). Suppose that $B$ is ${\mathbb Z}^2$-graded with a 
decomposition $B_1=B_{01}\oplus B_{10}$ where $B_{01}$ and 
$B_{10}$ are nonzero ${\mathbb Z}^2$-homogeneous components.
\begin{enumerate}
\item
If $\dim B_{01}=1$ or $\dim B_{10}=1$, then $B$ is isomorphic
to an Ore extension $A[y;\sigma]$ for some Artin-Schelter 
regular algebra $A$ of dimension three.
\item
If $\dim B_{01}=\dim B_{10}=2$, then $B$ is isomorphic to a 
trimmed double extension $A_{P}[y_1,y_2;\sigma]$ for some 
Artin-Schelter regular algebra $A$ of dimension two.
\end{enumerate}
\end{theorem}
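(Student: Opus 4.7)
The plan is to use the $\ZZ^2$-grading to decompose the six quadratic relations of $B$ by bidegree, and then extract the double extension (or Ore extension) data from those relations. Since $B$ is of type (14641), it is Koszul with quadratic relation space $R\subseteq V\otimes V$ of dimension $6$, where $V=B_1=B_{10}\oplus B_{01}$. The $\ZZ^2$-grading splits $R=R_{20}\oplus R_{11}\oplus R_{02}$; I write $r_{ij}=\dim R_{ij}$. First, I would observe that the Koszul dual $B^!$ is Frobenius with Hilbert series $(1+t)^4$ and with its bigraded top socle in bidegree $(\dim B_{10},\dim B_{01})$; Frobenius duality on $B^!$ then pairs $B^!_{2,(i,j)}$ with $B^!_{2,(\dim B_{10}-i,\,\dim B_{01}-j)}$. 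In case (a), after relabeling so that $\dim B_{10}=3$ and $\dim B_{01}=1$, the duality forces $r_{02}=0$ and $r_{20}=r_{11}=3$. In case (b), with $\dim B_{10}=\dim B_{01}=2$, it gives $r_{20}=r_{02}$ and $2r_{20}+r_{11}=6$.

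Next, I would introduce $A:=k\langle B_{10}\rangle\subseteq B$ and note that because $B_{n,0}$ is spanned by products of elements of $B_{10}$, one has $A_n=B_{n,0}$, so $H_A(s)=H_B(s,0)$. The bigraded Koszul Euler identity expresses $H_B(s,t)^{-1}$ as the alternating sum of Hilbert series of the $\Tor^B_i(k,k)$, and combined with the first paragraph this determines $H_B(s,t)$ up to the remaining $r_{ij}$. Specializing at $t=0$ would yield $H_A(s)=1/(1-s)^3$ in case (a) and $H_A(s)=1/(1-2s+r_{20}s^2)$ in case (b). In case (b), positivity of the power series coefficients rules out $r_{20}\ge 2$, while $r_{20}=0$ would force $A$ to be free on two generators---impossible since $A\subseteq B$ and $\dim B_n$ grows only polynomially. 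So $r_{20}=1$ and $H_A(s)=1/(1-s)^2$. In both cases $A$ would then be generated by $\dim B_{10}$ elements with $\binom{\dim B_{10}}{2}$ quadratic relations and the predicted Hilbert series, making $A$ Artin--Schelter regular of the expected dimension.

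To conclude, I would choose bases $\{a_i\}$ of $B_{10}$ and $\{y_j\}$ of $B_{01}$. In case (b), the $4$ relations in $R_{11}$ can be written uniquely as $y_ja_i=\sum_{k,\ell}\sigma^{k\ell}_{ij}a_ky_\ell$, and the single $R_{02}$ relation as $y_1y_2=p_{12}y_2y_1+p_{11}y_1^2+p_{22}y_2^2$; these supply the DE-data $(P,\sigma)$ of \cite{ZZ}. The associativity conditions for the trimmed double extension $A_P[y_1,y_2;\sigma]$ hold automatically, since they are relations already satisfied in $B$, and the natural surjection $A_P[y_1,y_2;\sigma]\twoheadrightarrow B$ must then be an isomorphism because both sides have Hilbert series $1/(1-t)^4$. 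Case (a) is analogous: the $3$ relations in $R_{11}$ yield $ya_i=\sigma(a_i)y$ for a linear map $\sigma$ on $B_{10}$, which extends to a graded algebra endomorphism of $A$ via $f(\sigma(a))y=yf(a)=0$ for any $A$-relation $f$, using that $y$ is a non-zero-divisor in $B$.

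The main obstacle will be the determination of $r_{20}$ in case (b), for which Frobenius duality and the dimension count alone leave $r_{20}\in\{0,1,2,3\}$ and one must combine them with the bigraded Koszul Hilbert series identity and the growth bound coming from $A\subseteq B$. A secondary delicate point will be verifying that $y$ (respectively each $y_j$) is a non-zero-divisor in $B$, which is needed to promote the linear-algebra data to a compatible algebra endomorphism of $A$; this should follow from the Cohen--Macaulay property guaranteed by $B$ being a noetherian Artin--Schelter regular algebra of type (14641).
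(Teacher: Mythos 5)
Your plan is a genuinely different route from the paper's (bigraded Koszul/Frobenius duality and Hilbert--series bookkeeping instead of the paper's use of \cite[Theorem 3.9]{ATV2}, \cite[Proposition 3.5]{KKZ} and Theorem \ref{xxthm2.8}), but as written it skips exactly the step that carries the content of the theorem. The decisive gap is the sentence ``the $4$ relations in $R_{11}$ can be written uniquely as $y_ja_i=\sum_{k,\ell}\sigma^{k\ell}_{ij}a_ky_\ell$'' (and its analogue $ya_i=\sigma(a_i)y$ in case (a)). That assertion says precisely that $R\cap(B_{10}\otimes B_{01})=0$ and that the projection of $R_{11}$ onto $B_{01}\otimes B_{10}$ is bijective. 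Nothing in your dimension count rules out, say, a relation $a_1y_1-a_2y_2=0$ lying entirely in $B_{10}\otimes B_{01}$; if such a relation occurs, no homomorphism $\sigma$ can be read off and the monomials $ay_1^{n_1}y_2^{n_2}$ need not span $B$. Establishing this straightening property is what the paper's Lemma \ref{xxlem2.5} ($VW=WV$) and Lemmas \ref{xxlem2.6}, \ref{xxlem2.7} ($p_{12}\neq 0$) do, via the matrices $F$ and $G$ of the resolution in Lemma \ref{xxlem2.1}, its Gorenstein dual, Lemma \ref{xxlem2.2}(c), and the domain property of $B$. The same problem affects your $R_{02}$ relation: you allow a $p_{22}y_2^2$ term, which is not of the form \eqref{R1} required by Definition \ref{xxdefn1.3}, and you never argue that the coefficient of the monomial you solve for is nonzero; normalizing to \eqref{R1} with $p_{12}\neq 0$ requires knowing the algebra on the $y$'s is a two-dimensional regular algebra (a domain), which Hilbert series alone do not give. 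Without these points the final ``surjection with equal Hilbert series'' argument has nothing to start from.

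Two further gaps. First, the claim that the socle of $B^{!}$ sits in bidegree $(\dim B_{10},\dim B_{01})$ is asserted, not proved, and it is the linchpin of your computation of the $r_{ij}$; it is in fact provable inside your framework (the perfect pairings together with positivity of the coefficients of $H_B(s,0)$, $H_B(0,t)$ and the polynomial growth of $B$ exclude the other bidegrees -- e.g.\ with $\dim B_{10}=\dim B_{01}=2$ a socle in bidegree $(3,1)$ would force $H_B(s,0)=1/(1-2s+3s^2-2s^3)$, whose $s^3$-coefficient is $-2$), but as written it is an unproven assumption. Second, the inference ``generated by $d$ elements with $\binom{d}{2}$ quadratic relations and Hilbert series $(1-s)^{-d}$, hence Artin--Schelter regular'' is false: $k\langle x,y\rangle/(xy)$ and $k\langle x,y,z\rangle/(yx,zx,zy)$ have the right counts and series but are not regular. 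You need that $B$, hence $A$, is a domain (the paper quotes \cite[Theorem 3.9]{ATV2} and then the two-generator classification used in Lemma \ref{xxlem2.4}; in case (a) regularity of $A$ is best deduced only after the Ore-extension structure, from $A\cong B/(y)$ with $y$ regular normal). Your only nod to regular elements, attributed vaguely to the Cohen--Macaulay property at the very end, comes too late to support these earlier steps.
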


Since all double extensions $A_{P}[y_1,y_2;\sigma]$ (that are 
not Ore extension of regular algebras of dimensional three) are 
classified in Section 4, Theorem \ref{xxthm0.2} gives a 
classification of (non-trivially) ${\mathbb Z}^2$-graded noetherian 
regular algebras of type (14641). Various other properties related 
to Artin-Schelter regular algebras are studied. Here is another 
characterization of the double extensions in Theorem \ref{xxthm0.1}. 

\begin{proposition}
\label{xxprop0.3}  
Let $B$ be an Artin-Schelter regular domain of global dimension 
four generated by four degree 1 elements. Then $B$ is a double 
extension if and only if there are $x_1,x_2\in B_1$ such that 
\begin{enumerate}
\item
$B$ has a quadratic relation involving only $x_1,x_2$,
\item
$B/(x_1,x_2)$ is Artin-Schelter regular of dimension two.
\end{enumerate}
\end{proposition}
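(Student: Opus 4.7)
The forward direction is essentially unwinding the definition of a double extension. First, take minimal generators $x_1,x_2$ of the subalgebra $A$: since $A$ is Artin-Schelter regular of dimension $2$ with two generators, it has a unique quadratic relation $f(x_1,x_2)=0$ which persists in $B$, giving (1). Second, the Ore-like relations $y_j a = \sum_i \sigma_{ij}(a)\, y_i + \delta_j(a)$ for $a\in A_1$ force $BA_+ \subseteq A_+ B$, so the two-sided ideal $(x_1,x_2)$ of $B$ equals $A_+ B$. Freeness of $B$ over $A$ with basis $\{y_1^i y_2^j\}$ identifies $B/(x_1,x_2) \cong B\otimes_A k$ with $k\langle y_1,y_2\rangle / (y_2 y_1 - p_{12} y_1 y_2 - p_{11} y_1^2)$; since $p_{12}\ne 0$ is built into the double extension data, this quotient is Artin-Schelter regular of dimension $2$, giving (2).

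For the converse, assume (1) and (2). I would set $W = kx_1 + kx_2 \subseteq B_1$, let $A \subseteq B$ be the subalgebra generated by $W$, and pick $y_1,y_2 \in B_1$ whose images form a basis of $(B/(x_1,x_2))_1$; write $U = ky_1 + ky_2$, so $B_1 = W\oplus U$. Via the general characterization of double extensions in \cite{ZZ}, the converse reduces to two claims: (a) $A$ is Artin-Schelter regular of dimension $2$; and (b) $B$ is free as a left $A$-module with basis $\{y_1^i y_2^j\}_{i,j\ge 0}$. Once (a) and (b) are in place, the Ore-like data $(\sigma,\delta,\tau,P)$ drops out of the relation structure of $B$.

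Since $B$ is of type $(14641)$ by \cite{LPWZ}, it has $\dim R = 6$ quadratic relations $R\subseteq V\otimes V$ with $V = B_1$. Hypothesis (2) says the image of $R$ under $\pi\colon V\otimes V \to U\otimes U$ is $1$-dimensional, so $\dim(R\cap\ker\pi)=5$. For (a), $B$ being a domain forces $A$ to be a domain, so the relation $f$ in (1) cannot factor as a product of two linear forms in $W$. A Hilbert-series count using $H_B = (1-t)^{-4}$ and $H_{B/(x_1,x_2)} = (1-t)^{-2}$ pins down $\dim(R \cap (W\otimes W)) = 1$ and $H_A = (1-t)^{-2}$, whence $A \cong k\langle x_1,x_2\rangle/(f)$ is Artin-Schelter regular of dimension $2$. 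For (b), the four remaining relations in $R\cap\ker\pi$ beyond $kf$ must lie in $(U\otimes W) + (W\otimes U) + (W\otimes W)$ with nonzero $U\otimes W$-components, encoding normal-form reductions $y_j x_i = \sum_k \sigma_{kj}(x_i)\, y_k + \delta_j(x_i)$ with $\sigma_{kj}(x_i)\in W$ and $\delta_j(x_i)\in A_2$. These rewrite any monomial of $B$ as an $A$-linear combination of $\{y_1^i y_2^j\}$; the numerical identity $H_B = H_A \cdot (1-t)^{-2}$ forces uniqueness, giving freeness. The remaining sixth relation then supplies the $y$-relation $y_2 y_1 = p_{12} y_1 y_2 + p_{11} y_1^2 + \tau_1 y_1 + \tau_2 y_2 + \tau_0$ with $P = (p_{11},p_{12})$ and $\tau_0\in A_2, \tau_1,\tau_2\in A_1$, completing the identification $B = A_P[y_1,y_2;\sigma,\delta,\tau]$.

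The main obstacle will be the dimension bookkeeping underlying (a) and (b): I must rule out degenerate configurations of the $6$ quadratic relations --- e.g., extra relations lying purely in $W\otimes W$ (which would make $A$ too small or non-regular), or mixed relations lacking a $U\otimes W$-component (which would obstruct the Ore-like normal form $y_jx_i \rightsquigarrow A_+ + W\cdot U$). Getting the split ``$1 + 4 + 1$'' along the decomposition $W\otimes W,\, (U\otimes W)+(W\otimes U)+(W\otimes W),\, \pi^{-1}(\text{the one $y$-relation})$ requires interlocking the type-$(14641)$ Koszul resolution of $B$, the domain property (constraining $f$), and the Artin-Schelter regularity of $B/(x_1,x_2)$ (controlling $\pi(R)$).
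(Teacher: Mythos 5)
Your forward direction is essentially the paper's: it quotes the structure of $B/(A_{\geq 1})$ as $k\langle y_1,y_2\rangle/(y_2y_1-p_{12}y_1y_2-p_{11}y_1^2)$ with $p_{12}\neq 0$ (the paper cites \cite[Proposition 1.14]{ZZ} for this), and the existence of a quadratic relation in $x_1,x_2$ follows from the fact that the subring $A$ has Hilbert series $(1-t)^{-2}$ (Lemma \ref{xxlem2.4}); this half is fine modulo those citations.

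The converse, however, has a genuine gap, and it sits exactly where you yourself flag ``the main obstacle.'' The whole content of the implication is the claim that the four relations in $R\cap\ker\pi$ other than $f$ can be taken in the form $y_jx_i=\sum_k \sigma_{kj}(x_i)y_k+\delta_j(x_i)$, i.e.\ that their components in $U\otimes W$ span all of the $4$-dimensional space $U\otimes W$ (note that ``nonzero $U\otimes W$-components,'' which is all you assert, is not enough: the four projections could be proportional, and then you could not move every $y_jx_i$ across). Equivalently, one must prove $UW\subseteq WU+W\otimes W$ (and its mirror image), which is Lemma \ref{xxlem2.5}(a) ($VW=WV$ in the paper's notation). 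Nothing in your Hilbert-series bookkeeping yields this: the hypotheses only control $\pi(R)\subset U\otimes U$ and the one relation in $W\otimes W$, and a priori the remaining relations could, say, all lie in $W\otimes U+W\otimes W$, in which case your rewriting procedure never starts. The paper's proof of this step is the real work of Section 2: it uses the full type $(14641)$ resolution, the matrices $F$ and $G$ with $GF=0$, Gorenstein duality of the resolution, the domain hypothesis, and the rank estimate of Lemma \ref{xxlem2.2}(c) on the columns of $G$ to exclude precisely the degenerate configurations you defer. (Two smaller omissions: having obtained the right double extension structure, you still need the two-sidedness, which the paper gets from $p_{12}\neq 0$ and the invertibility of $\sigma$ via \cite[Lemma 1.9 and Proposition 1.13]{ZZ}; and the spanning-plus-Hilbert-series argument for freeness should be run as in Lemma \ref{xxlem2.5}(c), after $H_A=(1-t)^{-2}$ is known from the quadratic relation and the domain property.) As written, the proposal is a correct reduction plus an accurate diagnosis of the missing lemma, but the missing lemma is the theorem.
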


Here is an outline of the paper. In Section 1 we review some
basic definitions. Theorem \ref{xxthm0.2} and Proposition 
\ref{xxprop0.3} are proved in Section 2. Sections 3 and 4 are
devoted to the classification that is unfortunately very tedious.
Our main theorem \ref{xxthm0.1} is proved in Section 5.

\section{Definitions}
\label{xxsec1}

Throughout $k$ is a commutative base field, that is algebraically 
closed. Everything is over $k$; in particular, an algebra or a ring 
is a $k$-algebra. An algebra $A$ is called {\it connected graded} if 
$$A=k\oplus A_1\oplus A_2\oplus \cdots$$
with $1\in k=A_0$ and $A_iA_j\subset A_{i+j}$ for all $i,j$. If $A$ 
is connected graded, then $k$ also denotes the trivial graded module 
$A/A_{\geq 1}$. In this paper we are working on connected graded 
algebras. One basic concept we will use is the Artin-Schelter 
regularity, which we now review. A connected graded algebra $A$ is 
called {\it Artin-Schelter regular} or {\it regular} for short if 
the following three conditions hold.
\begin{enumerate}
\item[(AS1)]
$A$ has finite global dimension $d$, and
\item[(AS2)] 
$A$ is {\it Gorenstein}, namely, there is an
integer $l$ such that,
$$\Ext^i_A({_Ak}, A)=\begin{cases} k(l) & \text{ if }
i=d\\
                                0   & \text{ if }
i\neq d
\end{cases}
$$
where $k$ is the trivial $A$-module; and the same condition holds 
for the right trivial $A$-module $k_A$.
\item[(AS3)]
$A$ has finite Gelfand-Kirillov dimension, i.e., there
is a positive number $c$ such that $\dim A_n< c\; n^c$ for
all $n\in \mathbb{N}$.
\end{enumerate}
If $A$ is regular, then the global dimension of $A$ is called the
{\it dimension} of $A$. The notation $(l)$ in (AS2) is the $l$-th 
degree shift of graded modules.

\begin{definition}
\label{xxdefn1.1}
Let $A, B$ and $C$ be connected graded algebras. 
The algebra $B$ is called an {\it extension} of $(A|C)$, if there 
is a sequence of graded maps
$$0\to A\xrightarrow{f} B\xrightarrow{g}C\to 0$$
satisfying the following conditions:
\begin{enumerate}
\item
Both $f$ and $g$ are graded algebra homomorphisms.
\item
$B$ contains $A$ as a graded subalgebra via $f$.
\item
$A_{\geq 1}B=BA_{\geq 1}$ and the map $g$ induces an isomorphism
of graded algebras $B/(A_{\geq 1})\cong C$.
\item
There is a vector space $\bar{C}\subset B$ such that $g:\bar{C}
\to C$ is an isomorphism of graded vector space and $B$ is a left 
and a right free $A$-module with basis $\bar{C}$.
\end{enumerate}
If $C$ is a regular algebra of dimension $n$, we call $B$
an $n$-extension of $A$. If $A$ is a regular algebra of 
dimension $n$, then we call $B$ an $n$-co-extension of $C$. 
\end{definition}

The following lemma characterize $n$-(co)-extensions for 
$n=0,1$.

\begin{lemma}
\label{xxlem1.2} 
\begin{enumerate}
\item
$B$ is a $0$-extension of $A$ if and only if $f:A\to B$
is an isomorphism.
\item
$B$ is a $0$-co-extension of $C$ if and only if $g:B\to C$
is an isomorphism.
\item
Suppose $A$ is generated in degree 1. Then $B$ is a 
$1$-extension of $A$ if and only if $B$ is an Ore
extension of $A$.
\item
$B$ is a $1$-co-extension of $C$ if and only if $B$ is 
a normal extension of $C$, namely, there is a normal
regular element $t$ such that $B/(t)\cong C$.
\end{enumerate}
\end{lemma}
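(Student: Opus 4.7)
My plan for parts (a) and (b) is to unwind Definition 1.1 directly, using that the only Artin-Schelter regular algebra of dimension zero is $k$ itself. In (a), $C=k$ forces $\bar C$ to be one-dimensional, so $B=A\cdot 1=A$ via $f$; the converse is immediate on setting $C:=B/(A_{\geq 1})=k$. Part (b) is dual: $A=k$ forces $A_{\geq 1}=0$, so $g:B\to C$ must be an isomorphism, and conversely if $g$ is an isomorphism then the injection $f$ has image in $\ker g=0$ on positive degrees, forcing $A=k$.

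For (c), I will use that an Artin-Schelter regular algebra of dimension one is a polynomial ring $C=k[\bar y]$ with $\deg \bar y=m$ for some $m\geq 1$. In the forward direction, the plan is to lift $\bar y$ to some $y\in\bar C$, then show by triangular change of basis (using that $y^{i}$ agrees with the $i$-th basis element of $\bar C$ modulo $A_{\geq 1}B=\ker g$) that $\{1,y,y^{2},\ldots\}$ is a free basis of $B$ both as left and as right $A$-module. Since $A$ is generated in degree one, $A_{\geq 1}=A_{1}A=AA_{1}$ and the normality condition reduces to $A_{1}B=BA_{1}$. For $x\in A_{1}$, expanding $yx$ in the left basis yields (by degree count) $yx=\delta(x)+\sigma(x)y+\gamma y^{2}$ with $\gamma\in k$ possibly nonzero only when $m=1$; the containment $yx\in BA_{1}=A_{\geq 1}B$ then forces $\gamma\in k\cap A_{\geq 1}=0$, giving $yx=\sigma(x)y+\delta(x)$ with $\sigma(x),\delta(x)\in A$. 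A mirror calculation on the right basis produces $xy=y\sigma'(x)+\delta'(x)$; comparing these with the identity $xy=x\cdot y$ forces $\sigma\sigma'=\sigma'\sigma=\id$ on $A_{1}$, so $\sigma$ extends to a graded automorphism of $A$. Associativity then forces $\delta$ to be a $\sigma$-derivation, and iterating along products of elements of $A_{1}$ (using $A=k\langle A_{1}\rangle$) extends the relation to all of $A$, giving $B\cong A[y;\sigma,\delta]$. The converse is routine verification directly from the construction of an Ore extension.

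For (d), I use that an Artin-Schelter regular algebra of dimension one is $A=k[t]$, whence $A_{\geq 1}=(t)$; the condition $A_{\geq 1}B=BA_{\geq 1}$ becomes $tB=Bt$, i.e., $t$ normal, and freeness of $B$ over $A$ forces $t$ to be a non-zero-divisor, while $B/(t)\cong C$ is condition (c) of Definition 1.1. Conversely, given a normal non-zero-divisor $t$ with $B/(t)\cong C$, the plan is to pick any graded complement $\bar C$ of $tB=Bt$ in $B$ and iterate $B=\bar C\oplus tB$; the non-zero-divisor property of $t$ guarantees that the resulting decomposition $B=\bigoplus_{i\geq 0}t^{i}\bar C$ is direct, and normality gives the mirror right decomposition $B=\bigoplus_{i\geq 0}\bar C t^{i}$ on the same $\bar C$.

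I expect the hard part to be the forward direction of (c): ruling out the spurious $y^{2}$-term in $yx$ for $x\in A_{1}$ when $\deg y=1$, and establishing invertibility of $\sigma$. Both are handled by exploiting the left/right symmetry of the free basis together with the normality condition $A_{\geq 1}B=BA_{\geq 1}$.
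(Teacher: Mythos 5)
Your proposal is correct and follows essentially the same route as the paper's proof: parts (a), (b) and the converse directions are definition-unwinding, and the substance of (c) is exactly the paper's argument --- expand the product of a degree-one element of $A$ with the lifted generator in the free $A$-basis, use the degree bound to confine the expansion to $A + Ay + ky^2$, invoke $A_{\geq 1}B = BA_{\geq 1}$ to kill the scalar coefficient of the quadratic basis element, and read off $\sigma$ and $\delta$. The extra steps you spell out (the triangular change of basis to the powers $y^i$, the left/right comparison giving invertibility of $\sigma$, and the iterated complement decomposition in the converse of (d)) are precisely the details the paper dismisses as ``routine'' or ``clear.''
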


\begin{proof} (a,b) Trivial.

(c) If $B$ is an Ore extension $A[x;\sigma,\delta]$, then
it is easy to check that $B$ is a $1$-extension of $A$
(without assuming that $A$ is generated in degree 1).

Now we assume $B$ is a $1$-extension of $A$ and $A$ 
is generated by $A_1$. In this case $C=k[x]$. Let 
$\bar{C}=\bigoplus_{i\geq 0}k x_i$ such that $g(x_i)=x^i$. 
For every $a\in A_1$, $ax_1\in A=A\oplus 
\bigoplus_{i\geq 1} x_i A$ and we can write
$$ax_1=b_0+\sum_{i\geq 1}x_i b_i.$$
Since $\deg (ax_1)=1+\deg x_1\leq \deg x_2<\deg x_i$ for all
$i>2$, $b_i=0$ for all $i>2$ and $b_2\in k$. Since $A_{\geq 1}B
=BA_{\geq 1}$, $b_2=0$. Thus $ax_1\in A\oplus x_1 A$. This
implies that $Ax_1\subset A\oplus x_1A$. By symmetry,
$x_1A\subset A\oplus Ax_1$. So there are maps $\sigma$ and 
$\delta$ such that
$$x_1 r=\sigma(r) x_1+\delta(r)$$
for all $r\in A$. It is easy to see that 
$\sigma$ is an algebra automorphism of $A$ and $\delta$ is a 
$\sigma$-derivation of $A$. Finally it is routine to check 
that $B=A[x_1;\sigma,\delta]$.

(d) In this case $A=k[t]$. Since $B$ is a free $A$-module
on both sides, $t$ is regular on both sides. By (b), $A_{\geq 1}B=
BA_{\geq 1}$. This implies that $tB=Bt=(A_{\geq 1})$. So $t$ is a 
regular normal element of $B$ and $B/(t)=C$. This says that
$B$ is a normal extension of $C$. The converse is clear.
\end{proof}

If $A$ is not generated in degree 1, then it is possible 
that a $1$-extension is not an Ore extension. Most 
algebras in this paper will be generated in degree 1.
Some basic properties of Ore extensions can be found in
\cite[Chapter 1]{MR}.

Next we will show that the definition of a $2$-extension is
equivalent to that of a double extension introduced in 
\cite{ZZ}. We first review the definition of a 
double extension in the connected graded case.

\begin{definition}\cite[Definition 1.3]{ZZ}
\label{xxdefn1.3}
Let $A$ be a connected graded algebra and $B$ be another
connected graded algebra containing $A$ as a graded subring. 
\begin{enumerate}
\item
We say $B$ is a {\it right double extension} of $A$ if the 
following conditions holds.
\begin{enumerate}
\item[(ai)]
$B$ is generated by $A$ and two variables 
$y_1$ and $y_2$ of positive degree.
\item[(aii)]
$\{y_1,y_2\}$ satisfies a homogeneous relation
\begin{equation}
\label{R1}
y_2y_1=p_{12}y_1y_2+p_{11}y_1^2+\tau_{1}y_1+
\tau_{2}y_2+\tau_{0}
\tag{R1}
\end{equation}
where $p_{12},p_{11}\in k$ and $\tau_{1}, \tau_{2}, 
\tau_{0} \in A$. 
\item[(aiii)]
As a left $A$-module, $B=\sum_{n_1,n_2\geq 0} 
Ay_1^{n_1}y_2^{n_2}$ and it is a left free 
$A$-module with a basis $\{y_1^{n_1}y_2^{n_2}
\;|\; n_1\geq 0,n_2\geq 0\}$.
\item[(aiv)]
$y_1 A+y_2 A\subseteq A y_1 + A y_2 + A$
\end{enumerate}
Let $P$ denote the set of scalar parameters 
$\{p_{12},p_{11}\}$ and  let $\tau$ denote 
the set $\{\tau_{1},\tau_{2},\tau_{0}\}$. 
\item
We say $B$ is a {\it left double extension} of 
$A$ if the following conditions holds.
\begin{enumerate}
\item[(bi)]
$B$ is generated by $A$ and two variables 
$y_1$ and $y_2$.
\item[(bii)]
$\{y_1,y_2\}$ satisfies a homogeneous relation
\begin{equation}
\label{L1}
y_1y_2=p_{12}'y_2y_1+p_{11}'y_1^2+y_1\tau_{1}'+
y_2\tau_{2}'+\tau_{0}'
\tag{L1}
\end{equation}
where $p_{12}',p_{11}'\in k$ and $\tau_{1}',\tau_{2}', 
\tau_{0}' \in A$. 
\item[(biii)]
As a right $A$-module, $B=\sum_{n_1,n_2\geq 0} 
y_2^{n_1}y_1^{n_2}A$ and it is a right free 
$A$-module with a basis $\{y_2^{n_1}y_1^{n_2}
\;|\; n_1\geq 0,n_2\geq 0\}$.
\item[(biv)]
$Ay_1 +Ay_2 A\subseteq y_1 A +  y_2 A + A$.
\end{enumerate}
\item
We say $B$ is a {\it double extension} if it is a left and
a right double extension of $A$ with the same 
generating set $\{y_1,y_2\}$.
\end{enumerate}
\end{definition}

If $B$ is a double extension of $A$, then $p_{12}p'_{12}=1$
and hence $p_{12}\neq 0$. Both Definitions \ref{xxdefn1.1}
and \ref{xxdefn1.3} are abstract. To study 
extensions we need to find more precise information 
about these algebras. The condition in Definition 
\ref{xxdefn1.3}(aiv) can be written as follows:
\begin{equation}
\label{R2}
\begin{pmatrix} y_1\\y_2\end{pmatrix} r
:= \begin{pmatrix} y_1 r\\y_2 r\end{pmatrix} =
\begin{pmatrix} \sigma_{11}(r)& \sigma_{12}(r)\\
\sigma_{21}(r)&\sigma_{22}(r)\end{pmatrix} 
\begin{pmatrix} y_1\\y_2\end{pmatrix}
+\begin{pmatrix} \delta_1(r)\\ \delta_2(r)
\end{pmatrix}
\tag{R2}
\end{equation}
for all $r\in A$. Here $\sigma(r):=
\begin{pmatrix} \sigma_{11}(r)&
\sigma_{12}(r)\\
\sigma_{21}(r)&\sigma_{22}(r)\end{pmatrix}$ is an
algebra homomorphism from $A$ to $M_2(A)$ and
$\delta(r):=
\begin{pmatrix} \delta_1(r)\\ \delta_2(r)
\end{pmatrix}$ 
is a $\sigma$-derivation from $A$ to $A^{\oplus 2}:=
\begin{pmatrix} A\\A\end{pmatrix}$. By \cite[Section 1]{ZZ}, 
$\sigma$ and $\delta$ are uniquely determined. Together with 
Definition \ref{xxdefn1.3}, all symbols in the DE-data 
$\{P,\sigma,\delta,\tau\}$ are defined, and the double 
extension $B$ in Definition \ref{xxdefn1.3} is denoted by 
$A_P[y_1,y_2;\sigma,\delta,\tau]$. We call 
$\sigma$ a {\it homomorphism}, $\delta$ 
a {\it derivation}, $P$  a {\it parameter} and 
$\tau$ a {\it tail}. A double extension 
$A_P[y_1,y_2;\sigma,\delta,\tau]$ is called {\it trimmed}
if $\delta=0$ and $\tau=\{0,0,0\}$.

\begin{lemma}
\label{xxlem1.4}
Suppose $A$ and $B$ are generated in degree 1. Then $B$ is a 
$2$-extension of $A$ if and only if $B$ is a double extension 
of $A$.
\end{lemma}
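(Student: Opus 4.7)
The plan is to verify both implications directly from Definitions~\ref{xxdefn1.1} and~\ref{xxdefn1.3}. Two standard inputs will be used throughout: (a) every Artin-Schelter regular algebra $C$ of dimension 2 generated in degree~1 has two degree-1 generators and a single quadratic relation which, after a linear change of basis, can be put in the form $y_2 y_1 = p_{12} y_1 y_2 + p_{11} y_1^2$ with $p_{12}\neq 0$, and admits $\{y_1^a y_2^b:a,b\geq 0\}$ as a $k$-basis; (b) graded Nakayama: if $B$ is $A$-free on some basis and $V\subset B$ is a graded subspace with $V\oplus A_{\geq 1}B = B$, then $V$ itself is a free $A$-basis of $B$.

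For $(\Leftarrow)$, suppose $B = A_P[y_1,y_2;\sigma,\delta,\tau]$. Set $C := B/(A_{\geq 1})$. A short induction on monomial length in $y_1,y_2$, using (aiv) and (biv), yields $A_{\geq 1}B = BA_{\geq 1}$, so $(A_{\geq 1})$ is a two-sided ideal and the quotient map $g:B\to C$ is a graded algebra homomorphism. Reducing (R1) modulo $A_{\geq 1}$ presents $C$ as $k\langle y_1,y_2\rangle/(y_2y_1 - p_{12}y_1y_2 - p_{11}y_1^2)$, which, since $p_{12}\neq 0$, is isomorphic to a quantum or Jordan plane and hence Artin-Schelter regular of dimension 2. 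Taking $\bar C = \mathrm{span}_k\{y_1^a y_2^b\}\subset B$: left-freeness on $\bar C$ is (aiii), and right-freeness on the same $\bar C$ follows because (R1) with $p_{12}\neq 0$ produces a triangular invertible change of basis between $\{y_1^a y_2^b\}$ and the right basis $\{y_2^a y_1^b\}$ from (biii).

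For $(\Rightarrow)$, choose $y_1,y_2\in B_1$ projecting to a basis of $C_1$ in which the relation of $C$ takes the standard form of (a). Since $A$ and $B$ are generated in degree~1, one has $B_1 = A_1 \oplus ky_1 \oplus ky_2$ and $B$ is generated by $A\cup\{y_1,y_2\}$, giving (ai) and (bi). The subspace $V := \mathrm{span}_k\{y_1^a y_2^b\}\subset B$ projects isomorphically onto $C$ in each degree (both have dimension $n+1$ in degree $n$), so $V\oplus A_{\geq 1}B = B$, and by (b) applied to the basis $\bar C$ of Definition~\ref{xxdefn1.1}(4), $\{y_1^a y_2^b\}$ is itself a left $A$-basis, establishing (aiii). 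In this basis $(A_{\geq 1}B)_2 = A_2 \oplus A_1 y_1 \oplus A_1 y_2$, since no nonzero $k$-multiple of a monomial $y_1^a y_2^b$ with $a+b=2$ can appear ($A_{\geq 1}\cap A_0 = 0$). Lifting the $C$-relation into this slice yields (R1)/(aii), and the same slice contains $y_i r$ for every $r\in A_1$, giving the base case $y_i r\in A + Ay_1 + Ay_2$ of (aiv). The general case of (aiv) follows by induction on $n=\deg r$: write $r=r's$ with $r'\in A_1$ and $s\in A_{n-1}$ (possible since $A$ is generated in degree~1), and expand $y_i r = (y_i r')s = (a' + b_1 y_1 + b_2 y_2)s$; the inductive hypothesis applied to $y_1 s$ and $y_2 s$ completes the step. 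The left double extension conditions (bi)--(biv) follow by the entirely symmetric argument using right-freeness and right-multiplication.

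The main obstacle is the straightening condition (aiv) in the $(\Rightarrow)$ direction. Its base case is surprisingly rigid: eliminating the \emph{a priori} possible coefficients $c\cdot y_1^a y_2^b$ with $a+b\geq 2$ in the expansion of $y_i r$ for $r\in A_1$ rests on the graded observation that such a coefficient $c$ would have to lie in $A_{\geq 1}\cap A_0 = 0$. The induction step then critically uses the hypothesis that $A$ is generated in degree~1, which is why this hypothesis appears in the lemma.
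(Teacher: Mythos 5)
Your proposal is correct, and it works with the same ingredients as the paper's proof (the normal form and PBW basis of the dimension-two regular quotient $C$ via \cite{Zh2}, the freeness and Hilbert-series information coming from Definition \ref{xxdefn1.1}(d), induction on degree using generation in degree 1, and symmetry for the left-sided conditions), but it routes through them in a noticeably different order. In the forward direction the paper first lifts the relation of $C$ to the two-sided form (E1.4.1), then derives the straightening condition (aiv) and its mirror (E1.4.2) from $B=A\otimes\bar C=\bar C\otimes A$, and only afterwards gets monomial spanning and left-freeness (aiii) from the Hilbert series; you invert this, obtaining (aiii) first from $V\oplus A_{\geq 1}B=B$ plus graded Nakayama (so without using (aiv) at all), after which (R1) already in one-sided form, and the base case of (aiv), drop out of the degree-two slice $A_2\oplus A_1y_1\oplus A_1y_2$ --- this avoids the paper's handling of the extra tails $y_1\tau_1'+y_2\tau_2'$ in (E1.4.1). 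In the converse direction you are self-contained where the paper cites \cite[Proposition 1.14]{ZZ} (your induction giving $A_{\geq 1}B=BA_{\geq 1}$ from (aiv)/(biv)), and you explicitly confront the point the paper dismisses with ``follows from (aiii,biii)'', namely that Definition \ref{xxdefn1.1}(d) requires one space $\bar C$ that is simultaneously a left and a right basis. The one loose joint is exactly there: ``triangular invertible change of basis between $\{y_1^ay_2^b\}$ and $\{y_2^ay_1^b\}$'' is not literally a change of $k$-basis of a single subspace, since rewriting via (R1) creates tails with coefficients in $A$ that leave the monomial span; the honest version is that (R1) together with (biv) shows $\{y_1^ay_2^b\}$ right-spans $B$, and right-freeness then follows from the Hilbert series supplied by (biii) --- the same triangularity idea, with one extra step. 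With that caveat (and noting that the quotient presentation of $B/(A_{\geq 1})$ in your converse deserves the one-line dimension count from (aiii)), the argument is sound; your ordering buys a cleaner forward direction, while the paper's buys brevity by outsourcing the converse to \cite{ZZ}.
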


\begin{proof} If $B$ is a double extension of $A$, then by
\cite[Proposition 1.14]{ZZ}, $A_{\geq 1}B=BA_{\geq 1}$,
which is denoted by $(A_{\geq 1})$, and 
$$B/(A_{\geq 1})=k\langle Y_1,Y_2\rangle/
(Y_2Y_1-p_{12}Y_1Y_2-p_{11}Y_1^2)$$ 
where $Y_i$ is the image of $y_i$ in $B/(A_{\geq 1})$.
So $C=k\langle Y_1,Y_2\rangle/
(Y_2Y_1-p_{12}Y_1Y_2-p_{11}Y_1^2)$ and it is regular of 
dimension 2. Condition (d) in the definition of $2$-extension 
follows from Definition \ref{xxdefn1.3}(aiii,biii).

Converse, we assume that $B$ is a $2$-extension. So there is
an ``exact sequence''
$$0\to A\to B\to C\to 0$$
where $C$ is regular of dimension 2. Since $C$ is generated in degree
1, $C$ is isomorphic to $k\langle Y_1,Y_2\rangle/
(Y_2Y_1-p_{12}Y_1Y_2-p_{11}Y_1^2)$ 
for some $p_{12},p_{11}\in k$ and $p_{12}\neq 0$ 
\cite[Theorem 0.2]{Zh2}.
Lifting $Y_1$ and $Y_2$ to $y_1$ and $y_2$ in $B$. 
Then $y_1$ and $y_2$ satisfy a relation
\begin{equation}
\label{E1.4.1}
y_2y_1=p_{12}y_1y_2+p_{11}y_1^2+\tau_{1}y_1+\tau_{2}y_2
+y_1\tau_{1}'+y_2\tau_{2}'+\tau_{0}
\tag{E1.4.1}
\end{equation}
for some $\tau_{i},\tau_{i}'\in A$. 
By Definition \ref{xxdefn1.1}(d), $B=A\otimes \bar{C}=\bar{C}\otimes A$
as left and right free $A$-modules respectively. Thus 
$$A_1y_1+A_1y_2\subset B_2=(\bar{C}\otimes A)_2=\bar{C}_2+y_1A_1+y_2A_2
+A_2.$$
By Definition \ref{xxdefn1.1}(c), $A_{\geq 1}B=BA_{\geq 1}$. Because
$\bar{C}_2\cap (A_{\geq 1})=\{0\}$,
$$A_1y_1+A_1y_2\subset y_1A_1+y_2A_2 +A_2.$$
Since $A$ is generated by $A_1$, by induction on the degree of 
elements in $A$, we obtain that
$$Ay_1+Ay_2\subset y_1A+y_2A+A.$$
Similarly, $y_1A+y_2A \subset Ay_1+Ay_2+A$. Consequently,
\begin{equation}
\label{E1.4.2}
Ay_1+Ay_2+A=y_1A+y_2A+A
\tag{E1.4.2}
\end{equation}
and it is a free $A$-module of rank $3$ by Definition \ref{xxdefn1.1}(d).
Using \eqref{E1.4.2}, we can re-write \eqref{E1.4.1} as
$$y_2y_1=p_{12}y_1y_2+p_{11}y_1^2+\tau_{1}y_1+\tau_{2}y_2
+\tau_{0}.$$
Therefore (R1) and Definition \ref{xxdefn1.3}(aii) holds. 
Definition \ref{xxdefn1.3}(ai) is clear and Definition 
\ref{xxdefn1.3}(aiv) is \eqref{E1.4.2}. Next we show 
Definition \ref{xxdefn1.3}(aiii). By using \eqref{E1.4.1}
and \eqref{E1.4.1}, every element in $B$ can be written as
$\sum_{n_1,n_2\geq 0}a_{n_1,n_2}y_1^{n_1}y_2^{n_2}$ for
$a_{n_1,n_2}\in A$. This implies that $B=
\sum_{n_1,n_2\geq 0}Ay_1^{n_1}y_2^{n_2}$. Since $B$ is a
2-extension, by Definition \ref{xxdefn1.1}(d), the Hilbert
series of $B$ is  $H_B(t)=H_A(t)H_C(t)$. Thus $B$ is a
left $A$-module with basis $\{y_1^{n_1}y_2^{n_2}| n_1,n_2\geq 0\}$.
So Definition Definition \ref{xxdefn1.3}(aiii) holds, and 
$B$ is a right double extension of $A$. By symmetry, 
$B$ is a left double extension of $A$ with the same 
generating set $\{y_1,y_2\}$. Therefore $B$ is a double extension.
\end{proof}

The following definition is given in \cite[Definition 1.8]{ZZ}.

\begin{definition}\cite[Definition 1.8]{ZZ}
\label{xxdefn1.5}
Let $\sigma: A\to M_2(A)$ be an algebra homomorphism.
We say $\sigma$ is {\it invertible} if there is 
an algebra homomorphism
$$\phi=\begin{pmatrix} \phi_{11}&\phi_{12}\\
\phi_{21}&\phi_{22}\end{pmatrix}: A\to M_2(A)$$
satisfies
the following conditions:
$$\sum_{k=1}^2 \phi_{jk}(\sigma_{ik}(r))=
\begin{cases} r& \text{if  } i=j\\
               0& \text{if  } i\neq j
\end{cases}
\quad \text{and}\quad
\sum_{k=1}^2 \sigma_{kj}(\phi_{ki}(r))=
\begin{cases} r& \text{if  } i=j\\
               0& \text{if  } i\neq j
\end{cases}
$$
for all $r\in A$, or equivalently,
$$\begin{pmatrix} \phi_{11}&\phi_{12}\\
\phi_{21}&\phi_{22}\end{pmatrix}
\bullet
\begin{pmatrix} \sigma_{11}&\sigma_{21}\\
\sigma_{12}&\sigma_{22}\end{pmatrix}
=
\begin{pmatrix} \sigma_{11}&\sigma_{21}\\
\sigma_{12}&\sigma_{22}\end{pmatrix}
\bullet
\begin{pmatrix} \phi_{11}&\phi_{12}\\
\phi_{21}&\phi_{22}\end{pmatrix}
=\begin{pmatrix} Id_A&0\\
0&Id_A\end{pmatrix}$$
where $\bullet$ is the multiplication of
the matrix algebra $M_2(\End_k(A))$. The
multiplication of $\End_k(A)$ is the 
composition of $k$-linear maps.
The map $\phi$ is called the {\it inverse} 
of $\sigma$. 
\end{definition}

By \cite[Lemma 1.9]{ZZ} if $B=A_P[y_1,y_2;
\sigma,\delta,\tau]$ is a double extension of
$A$, then $\sigma$ is invertible in the sense of
Definition \ref{xxdefn1.5}. As in \cite[Section 3]{ZZ}, 
one can define the determinant of $\sigma$, 
denoted by $\det \sigma$. By \cite[Section 4]{ZZ} 
$\det \sigma$ plays an essential role in the proof 
of regularity of a double extensions.

Next we will list the relations (or the constraints)
between the DE-data that come from commuting $r\in A$ 
with (R1). The collection of the following six relations 
is called (R3) for short.

\bigskip
\bigskip

\centerline{Relations (R3)}
\begin{align}
\label{R3.1}
\sigma_{21}&(\sigma_{11}(r))+p_{11}\sigma_{22}(\sigma_{11}(r))
\tag{R3.1}\\
\notag
&=p_{11}\sigma_{11}(\sigma_{11}(r))+p_{11}^2\sigma_{12}(\sigma_{11}(r))
+p_{12}\sigma_{11}(\sigma_{21}(r))+p_{11}p_{12}\sigma_{12}(\sigma_{21}(r))
\end{align}

\begin{align}
\label{R3.2}
\sigma_{21}&(\sigma_{12}(r))+p_{12}\sigma_{22}(\sigma_{11}(r))
\tag{R3.2}\\
\notag
&=
p_{11}\sigma_{11}(\sigma_{12}(r))+p_{11}p_{12}\sigma_{12}(\sigma_{11}(r))
+p_{12}\sigma_{11}(\sigma_{22}(r))+p_{12}^2\sigma_{12}(\sigma_{21}(r))
\end{align}

\begin{align}
\label{R3.3}
\sigma_{22}&(\sigma_{12}(r))
\tag{R3.3}\\
&=p_{11}\sigma_{12}(\sigma_{12}(r))+p_{12}\sigma_{12}(\sigma_{22}(r))
\qquad\qquad\qquad\qquad\qquad\qquad\qquad\qquad\quad
\notag
\end{align}

\begin{align}
\label{R3.4}
\sigma_{20}&(\sigma_{11}(r))+\sigma_{21}(\sigma_{10}(r))+\tau_{1}
\sigma_{22}(\sigma_{11}(r))\qquad\qquad\qquad\qquad\qquad\qquad
\qquad \qquad
\tag{R3.4}\\
&=p_{11}[\sigma_{10}(\sigma_{11}(r))
+\sigma_{11}(\sigma_{10}(r))
+\tau_{1}\sigma_{12}(\sigma_{11}(r))]
\notag\\
&
\quad +p_{12}[\sigma_{10}(\sigma_{21}(t))
+\sigma_{11}(\sigma_{20}(r))
+\tau_{1}\sigma_{12}(\sigma_{21}(r))]
+\tau_{1}\sigma_{11}(r)+\tau_{2}\sigma_{21}(r)
\notag
\end{align}

\begin{align}
\label{R3.5}
\sigma_{20}&(\sigma_{12}(r))
+\sigma_{22}(\sigma_{10}(r))+\tau_{2}
\sigma_{22}(\sigma_{11}(r))
\qquad\qquad\qquad\qquad\qquad\qquad\qquad
\tag{R3.5}\\
&=p_{11}[\sigma_{10}(\sigma_{12}(r))
+\sigma_{12}(\sigma_{10}(r))
+\tau_{2}\sigma_{12}(\sigma_{11}(r))]
\notag\\
&
\quad +p_{12}[\sigma_{10}(\sigma_{22}(r))+
\sigma_{12}(\sigma_{20}(r))
+\tau_{2}\sigma_{12}(\sigma_{21}(r))]
+\tau_{1}\sigma_{12}(r)+\tau_{2}\sigma_{22}(r)
\notag
\end{align}

\begin{align}
\label{R3.6}
\sigma_{20}&(\sigma_{10}(r))+
\tau_{0}\sigma_{22}(\sigma_{11}(r))
\tag{R3.6}\\
&=p_{11}[\sigma_{10}(\sigma_{10}(r))+
\tau_{0}\sigma_{12}(\sigma_{11}(r))]
\qquad\qquad\qquad\qquad\qquad\qquad\qquad \quad \quad
\notag\\
&\quad +p_{12}[\sigma_{10}(\sigma_{20}(r))+
\tau_{0}\sigma_{12}(\sigma_{21}(r))]
+\tau_{1}\sigma_{10}(r)+\tau_{2}\sigma_{20}(r)+\tau_{0}r.
\notag
\end{align}

\bigskip

The following is a combination of \cite[Propositions 1.11 and 1.13]{ZZ}.

\begin{proposition}
\label{xxprop1.6} 
Let $A$ be an algebra. 
Suppose $\{P,\sigma,\delta,\tau\}$ be a set of data such that
$\sigma: A\to M_2(A)$ is an algebra homomorphism and  
$\delta: A\to A^{\oplus 2}$ is a $\sigma$-derivation and that 
$P=\{p_{12},p_{11}\}\subseteq k$ and 
$\tau=\{\tau_{1},\tau_{2},\tau_{0}\}\subseteq A$. 
\begin{enumerate}
\item
Assume that \textup{(R3)} holds for all $r\in X$ where $X$ is a set 
of generators of $A$. Let $B$ be the algebra generated by $A$ and 
$y_1,y_2$ subject to the relations \textup{(R1)} and 
\textup{(R2)} for generators $r\in X$. Then $B$ is a right double  
extension of $A$. Namely, $B$ is a left free $A$-module with a basis
$\{y_1^{n_1}y_2^{n_2}\;|\; n_1,n_2\geq 0\}$.
\item
If further $B$ is connected graded, $p_{12}\neq 0$ and $\sigma$
is invertible, then $B$ is a double extension of $A$.
\end{enumerate}
\end{proposition}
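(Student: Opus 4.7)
My plan is to prove part (a) by first propagating the compatibility conditions (R3) from the generating set $X$ to all of $A$, and then deriving the normal form via the Diamond Lemma. Part (b) then follows by symmetric manipulations using $p_{12} \neq 0$ and the invertibility of $\sigma$.

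For the first step, let $S \subseteq A$ be the subset of $r$ for which all six relations (R3.1)--(R3.6) hold. Since each (R3.$i$) is $k$-linear in $r$, the set $S$ is a linear subspace. The key point is multiplicative closure: given $r, s \in S$, expand both sides of (R3.$i$) applied to $rs$ using the identity $\sigma_{ij}(rs) = \sum_k \sigma_{ik}(r) \sigma_{kj}(s)$ coming from $\sigma$ being an algebra homomorphism, together with the Leibniz identity $\delta_i(rs) = \delta_i(r) s + \sum_k \sigma_{ik}(r) \delta_k(s)$ (which in the $\sigma_{i0}$ notation reads $\sigma_{i0}(rs) = \sigma_{i0}(r) s + \sum_k \sigma_{ik}(r) \sigma_{k0}(s)$). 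The resulting terms reorganize into $A$-linear combinations of (R3) applied separately to $r$ and to $s$, so $rs \in S$. Because $X \subseteq S$ generates $A$, we conclude $S = A$, so (R3) holds identically.

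Next I apply the Diamond Lemma to the reduction system whose rules are $y_j r \to \sigma_{j1}(r) y_1 + \sigma_{j2}(r) y_2 + \delta_j(r)$ for $j = 1, 2$ and $r \in A$, together with $y_2 y_1 \to p_{12} y_1 y_2 + p_{11} y_1^2 + \tau_1 y_1 + \tau_2 y_2 + \tau_0$, using the ordering that declares normal monomials to have the form $a \cdot y_1^{n_1} y_2^{n_2}$. There are exactly two families of overlap ambiguities: the ambiguity $(y_j r) s$ versus $y_j (rs)$ resolves because $\sigma$ is a homomorphism and $\delta$ is a $\sigma$-derivation, and the ambiguity $(y_2 y_1) r$ versus $y_2 (y_1 r)$ resolves precisely because (R3) holds for every $r \in A$, now established by Step 1. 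Hence every element of $B$ reduces uniquely to normal form, so $B$ is left free over $A$ on the basis $\{y_1^{n_1} y_2^{n_2}\}$, giving Definition \ref{xxdefn1.3}(aiii); the conditions (ai), (aii), (aiv) are immediate from the presentation. A concrete alternative to the Diamond Lemma, in the spirit of the classical Ore extension proof, is to take the polynomial bimodule $M = \bigoplus_{n_1, n_2 \geq 0} A \cdot y_1^{n_1} y_2^{n_2}$ with its obvious left $A$-module structure, define right actions of $y_1, y_2$ forced by (R1) and (R2), and check directly that associativity of the resulting multiplication is equivalent to (R3).

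For part (b), once (a) is established, $B$ is a right double extension. The hypothesis $p_{12} \neq 0$ lets us solve (R1) for $y_1 y_2$ to produce a relation of the form (L1). The invertibility of $\sigma$ in the sense of Definition \ref{xxdefn1.5} lets us invert the linear system (R2) to yield $r y_j \in y_1 A + y_2 A + A$ for $j = 1, 2$, giving the left analogue (biv). The right-free basis $\{y_2^{n_1} y_1^{n_2}\}$ demanded by (biii) then follows either by rerunning the Diamond-Lemma argument with the reversed ordering, or, in the connected graded setting, by a Hilbert series comparison with the left-free basis from (a). The main obstacle is Step 1: the multiplicative-closure calculation involves many terms, though the conceptual content is clean --- (R3) encodes exactly the formal compatibility between (R1) and (R2), and once it holds on the generating set, the homomorphism identity for $\sigma$ and the Leibniz rule for $\delta$ force it to propagate to all products.
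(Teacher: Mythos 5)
The paper contains no proof of Proposition \ref{xxprop1.6} at all: it is imported from \cite[Propositions 1.11 and 1.13]{ZZ}, so there is nothing internal to compare your argument with, and I judge it on its own. Your outline is the natural Ore-style construction and is sound. The one step that carries real weight is your Step 1, the propagation of (R3) from $X$ to all of $A$, which you assert after a one-sentence description of how the terms reorganize. The claim is true, and you can make it rigorous cleanly: let $W$ be the free left $A$-module on the monomials $y_iy_j$, $y_i$, $1$ of degree at most two in $y_1,y_2$, with the right $A$-action obtained by expanding $y_iy_j\,r$ and $y_i\,r$ through \eqref{R2} without reordering; the identity $(w\cdot r)\cdot s=w\cdot(rs)$ holds precisely because $\sigma$ is an algebra homomorphism and $\delta$ a $\sigma$-derivation. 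With $w_0=y_2y_1-p_{12}y_1y_2-p_{11}y_1^2-\tau_1y_1-\tau_2y_2-\tau_0$, the six constraints (R3) at $r$ are the componentwise form of the single condition $w_0\cdot r\in A\,w_0$, the factor being forced to equal $(\det\sigma)(r)$, the coefficient of $y_2y_1$ (up to the placement of the $\tau_i$ factors in the displayed \eqref{R3.4}--\eqref{R3.6}, which is immaterial for the trimmed extensions used later). Closure under products is then one line: $w_0\cdot(rs)=(w_0\cdot r)\cdot s=(\det\sigma)(r)\,(w_0\cdot s)=(\det\sigma)(r)(\det\sigma)(s)\,w_0$. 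Alternatively, Step 1 can be bypassed entirely by running the diamond lemma over the free algebra on $X\cup\{y_1,y_2\}$ (with a complete reduction system for $A$ adjoined): the only overlaps involving the rule for $y_2y_1$ are the words $y_2y_1x$ with $x\in X$, so (R3) is needed only on generators. If you keep your version, note that Bergman's lemma is stated over a field, so letting all of $A$ serve as coefficients requires either a $k$-basis of $A$ as letters or your free-module alternative, which is the classical Ore argument and is what I would write out.

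Two small points in part (b). Putting the relation into the form \eqref{L1} (condition (bii)) already uses the invertibility of $\sigma$, not only $p_{12}\neq 0$: solving \eqref{R1} for $y_1y_2$ leaves the tail $p_{12}^{-1}(\tau_1y_1+\tau_2y_2)$ with coefficients on the left, and rewriting it as $y_1\tau_1'+y_2\tau_2'+\tau_0'$ needs $Ay_k\subseteq y_1A+y_2A+A$, which comes from the inverse $\phi$ of Definition \ref{xxdefn1.5} via $ry_k=\sum_i y_i\phi_{ik}(r)-\sum_i\delta_i(\phi_{ik}(r))$. Also, the Hilbert-series argument for (biii) should be preceded by the spanning statement $B=\sum_{n_1,n_2}y_2^{n_1}y_1^{n_2}A$, proved by induction on the $y$-degree using (biv) and the reordered relation, and it uses local finiteness of the connected grading; spanning together with the dimension count from the left basis of part (a) then gives independence. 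With these glosses your proof is complete and is, in all likelihood, parallel to the omitted one in \cite{ZZ}.
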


\section{Regular algebras of dimension four}
\label{xxsec2} 

In this section we discuss some homological properties of 
(Artin-Schelter) regular algebras. We assume 
that all graded algebras in this section are generated in 
degree 1. 

The definition of regularity is recalled in Section 1.
If $B$ is regular, then by \cite[Proposition 3.1.1]{SZ}, 
the trivial left $B$-module $_Bk$ has a minimal free 
resolution of the form
\begin{equation}
\label{E2.0.1}
0\to P_{d}\to \cdots P_{1}\to P_{0}\to k_B\to 0
\tag{E2.0.1}
\end{equation}
 where $P_{w}=\oplus_{s=1}^{n_w}B(-i_{w,s})$ for some
finite integers $n_w$ and $i_{w,s}$. The Gorenstein condition 
(AS2) implies that the above free resolution is symmetric in the 
 sense that the dual complex of \eqref{E2.0.1} is a
free resolution of the trivial right $B$-module (after a 
degree shift). As a consequence,
we have $P_0=B$, $P_{d}=B(-l)$, $n_w=n_{d-w}$, and
$i_{w,s}+ i_{d-w, n_w-s+1}=l$ for all $w,s$.

Regular algebras of dimension three have been classified by 
Artin, Schelter, Tate and Van den Bergh \cite{AS,ATV1,ATV2}. 
If $B$ is a regular algebra of dimension three, then it is generated by 
either two or three elements. If $B$ is generated by three elements, 
then $B$ is Koszul and the trivial $B$-module $k$ has a minimal free 
resolution  of form
$$0\to B(-3)\to B(-2)^{\oplus 3}\to B(-1)^{\oplus 3}\to B\to k\to 0.$$
If $B$ is generated by two elements, then $B$ is not Koszul and the 
trivial $B$-module $k$ has a minimal free resolution of the form
$$0\to B(-4)\to B(-3)^{\oplus 2}\to B(-1)^{\oplus 2}\to B\to k\to 0.$$

If $B$ is a noetherian regular algebra of (global) dimension four, 
then $B$ is generated by either 2, or 3 or 4 elements
\cite[Proposition 1.4]{LPWZ}. Minimal free resolutions of the 
trivial module $k$ is listed in \cite[Proposition 1.4]{LPWZ}. 
The following lemma is well-known. The transpose of a matrix $M$ 
is denoted by $M^T$.

\begin{lemma}
\label{xxlem2.1} 
Let $B$ be a regular graded domain of dimension four. 
Suppose $B$ is generated by elements $x_1,x_2,x_3,x_4$ (of degree 1). 
\begin{enumerate}
\item
$B$ is of type (14641), namely, the trivial left $B$-module $k$ has 
a free resolution
\begin{equation}
\label{E2.1.1}
0\to B(-4) \xrightarrow{\partial_4} B^{\oplus 4}(-3)
\xrightarrow{\partial_3} B^{\oplus 6}(-2)
\xrightarrow{\partial_2} B^{\oplus 4}(-1)
\xrightarrow{\partial_1} B
\xrightarrow{\partial_0} k\to 0
\tag{E2.1.1}
\end{equation}
where $B^{\oplus n}$ is the free left $B$-module written as an 
$1\times n$ matrix.
\item
$\partial_0$ is the augmentation map with
$\ker \partial_0=B_{\geq 1}$.
\item
$\partial_1$ is given by the right multiplication by
$(x_1,x_2,x_3,x_4)^T$.
\item
$\partial_2$ is the right multiplication by a $6\times
4$-matrix
$F=(f_{ij})_{6\otimes 4}$ such that $f_i:=\sum_{j=1}^4
f_{ij} x_j$,
for $i=1,2,3,4,5,6$, are the 6 relations of $B$.
\item
$\partial_3$ is the right multiplication by a $4\times
6$-matrix
$G=(g_{ij})_{4\times 6}$.
\item
$\partial_4$ is the right multiplication by
$(x'_1,x'_2,x'_3,x'_4)$
where $\{x'_1,x'_2,x'_3,x'_4\}$ is a set of generators
of $B$.
\textup{(}So each $x'_i$ is a $k$-linear combination
of $\{x_i\}_{i=1}^4$.
\textup{)}
\item 
$F(x_1,x_2,x_3,x_4)^T=0$, $GF=0$,
$(x'_1,x'_2,x'_3,x'_4)G=0$.
\end{enumerate}
\end{lemma}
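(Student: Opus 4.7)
The plan is to invoke \cite[Proposition 1.4]{LPWZ} for the overall shape of the resolution and then unpack the individual boundary maps using minimality and the Gorenstein self-duality of regular algebras of global dimension four. Throughout, I will use that $B$ is generated in degree $1$ so that the minimal resolution of $k_B$ is linear, meaning every nonzero entry of each matrix $\partial_i$ is homogeneous of degree $1$.

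For part (a), since $B$ is generated by four degree-$1$ elements, the relevant case of \cite[Proposition 1.4]{LPWZ} forces the Betti numbers of $k_B$ to be $(1,4,6,4,1)$ with shifts $(0,-1,-2,-3,-4)$, which is precisely \eqref{E2.1.1}. (The Gorenstein shift $l=4$ is consistent with the last term $B(-4)$.) Parts (b)--(e) are then straightforward consequences of minimality. Part (b) is the definition of the augmentation. For (c), $\mathrm{im}\,\partial_1=B_{\geq 1}$ and, by minimality, the four basis vectors of $B^{\oplus 4}(-1)$ map to a minimal left generating set of $B_{\geq 1}$; after a linear change of basis we may take this set to be $\{x_1,x_2,x_3,x_4\}$, so $\partial_1$ is right multiplication by $(x_1,x_2,x_3,x_4)^T$. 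For (d), the entries $f_{ij}$ of the matrix $F$ are degree-$1$ elements, and the six elements $f_i=\sum_j f_{ij}x_j\in\ker\partial_1$ form a minimal generating set of the module of relations (of cardinality six by the Betti vector), so they are the six defining quadratic relations of $B$. Part (e) merely states that a $B$-linear map $B^{\oplus 4}(-3)\to B^{\oplus 6}(-2)$ is recorded by a $4\times 6$ matrix with degree-$1$ entries.

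Part (f) uses the Gorenstein condition. Applying $\Hom_B(-,B)$ to \eqref{E2.1.1} yields a complex of free right $B$-modules whose cohomology, by (AS2), is concentrated in the top degree and equals $k_B(4)$. Because the original resolution is minimal, so is its $B$-dual; hence after a degree shift the dualized complex \emph{is} a minimal free resolution of the trivial right module $k_B$. In this dualized resolution, the transpose of $\partial_4$ plays the role of the ``augmentation-by-generators'' map, so its single column must be a minimal right generating set of $B_{\geq 1}$. Since $B$ is generated in degree $1$, this column is $(x'_1,x'_2,x'_3,x'_4)^T$ for some choice of four degree-$1$ generators $x'_i$ of $B$, each being a $k$-linear combination of the $x_j$; therefore $\partial_4$ is right multiplication by the row $(x'_1,x'_2,x'_3,x'_4)$. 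Finally, part (g) is immediate: the three identities $F(x_1,x_2,x_3,x_4)^T=0$, $GF=0$, and $(x'_1,x'_2,x'_3,x'_4)G=0$ are just $\partial_1\partial_2=0$, $\partial_2\partial_3=0$, and $\partial_3\partial_4=0$, respectively.

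The only mildly subtle step is (f), where one must carefully track the identifications $\Hom_B(B(-i),B)\cong B(i)$ and check that the Gorenstein shift $l=4$ is compatible with the leftmost term $B(-4)$ in \eqref{E2.1.1}; minimality of the dual resolution together with the uniqueness (up to $\GL_4(k)$) of a minimal right generating set of $B_{\geq 1}$ then uniquely determines $\partial_4$ up to such a change of basis. No use is made of the domain hypothesis in this argument; it is simply inherited from the standing assumption on $B$.
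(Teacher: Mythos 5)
Your proposal is correct and follows essentially the same route as the paper: part (a) is taken from \cite[Proposition 1.4]{LPWZ}, parts (b)--(e) and (g) are read off from minimality and the complex property, and part (f) is obtained exactly as in the paper by applying $\Hom_B(-,B)$ and using (AS2) to see that the dual complex is a (minimal) free resolution of $k_B(4)$, so that the transpose of $\partial_4$ must record a generating set of $B_{\geq 1}$. The paper merely states this duality observation and calls the remaining parts clear, so your write-up is just a more detailed version of the same argument.
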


The dual complex of \eqref{E2.1.1} is obtained by 
applying the functor $(-)^\vee:=\Hom_B(-,B)$ to 
\eqref{E2.1.1}. Condition (AS2) implies that 
the dual complex of \eqref{E2.1.1} is a free 
resolution of the right $B$-module $k(4)$:
$$0\leftarrow k_B(4) \leftarrow B(4)
\xleftarrow{\partial_4^\vee} 
B^{\oplus 4}(3)
\xleftarrow{\partial_3^\vee} B^{\oplus 6}(2)
\xleftarrow{\partial_2^\vee} B^{\oplus 4}(1)
\xleftarrow{\partial_1^\vee} B
\leftarrow 0.$$
Lemma \ref{xxlem2.1}(f) follows from this 
observation. Other parts of Lemma 
\ref{xxlem2.1} are clear. 

\begin{lemma}
\label{xxlem2.2}
Let $B$ be as in Lemma \ref{xxlem2.1}.
\begin{enumerate}
\item
Each column and each row of $F$ and $G$ is nonzero.
\item
If $\alpha$ is a nonzero row vector in $k^4$
and $\beta$ is a nonzero row  vector in $k^6$, then
$F\alpha^T\neq 0$, $\alpha G\neq 0$, $\beta F\neq 0$ and 
$G\beta^T\neq 0$.
\item
The subspace spanned by elements in a fix column (or
row) of either $F$ or $G$ has dimension at least 2.
\end{enumerate}
\end{lemma}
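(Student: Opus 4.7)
The plan is to prove (b) first via Gorenstein duality, derive (a) as an immediate corollary, and obtain (c) by combining (b) with the relations $Fx^T=0$, $GF=0$, $x'G=0$ from Lemma \ref{xxlem2.1}(g) together with the domain hypothesis on $B$.

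First I would invoke the Gorenstein condition (AS2): applying $\Hom_B(-,B)$ to the resolution \eqref{E2.1.1} yields, up to the degree shift $l=4$, a minimal free resolution of the trivial right module $k_B(4)$. A direct bookkeeping (treating elements of $B^{\oplus n}$ as column vectors and right $B$-modules) identifies this dual resolution as
\[
0\to B\xrightarrow{\,x^T\,} B^{\oplus 4}(1)\xrightarrow{\,F\,} B^{\oplus 6}(2)\xrightarrow{\,G\,} B^{\oplus 4}(3)\xrightarrow{\,x'\,} B(4)\to k_B(4)\to 0,
\]
where each arrow is left multiplication by the indicated matrix (or by the row/column of generators), and exactness follows from $\Ext^i_B(k,B)=0$ for $i<4$.

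For (b), suppose $F\alpha^T=0$ with $\alpha\in k^4\setminus\{0\}$. Exactness of the dual complex at $B^{\oplus 4}(1)$ produces $b\in B$ with $\alpha^T=x^Tb$. Comparing degrees---the left side has scalar entries while the right side has entries of degree $\geq 1$ unless $b=0$---we are forced to take $b=0$ and hence $\alpha=0$, a contradiction. The three remaining non-vanishing statements are handled identically, invoking exactness of either \eqref{E2.1.1} or of its dual at the appropriate position. Part (a) then follows by specializing $\alpha$ or $\beta$ to a standard basis vector, since a vanishing column or row would immediately contradict (b).

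For (c) I argue by contradiction in four parallel cases. Suppose a column of $F$, say column $j$, is $k$-spanned in $B_1$ by a single element, so it equals $cf$ with $c\in k^6\setminus\{0\}$ and $f\in B_1\setminus\{0\}$. From $GF=0$ we get $0=GF^j=(Gc)f$; since $B$ is a domain and $f\neq 0$, this forces $Gc=0$, contradicting (b). A row of $F$, written $fd$ with $f\in B_1\setminus\{0\}$ and $d\in k^4\setminus\{0\}$, when combined with $Fx^T=0$ yields $f(dx^T)=0$ and hence $\sum_j d_j x_j=0$, impossible by linear independence of the generators. The column case for $G$ mirrors the row case for $F$, using $x'G=0$ together with the fact from Lemma \ref{xxlem2.1}(f) that the $x'_i$ are a (linearly independent) generating set; the row case for $G$ mirrors the column case for $F$, again via $GF=0$ combined with (b). I do not expect any serious obstacle; the only step requiring genuine care is setting up the dual resolution so that $F$ and $G$ reappear as left-multiplication maps, after which all three parts drop out from exactness, degree comparison, and the domain hypothesis.
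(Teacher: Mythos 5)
Your proof is correct, and it uses the same basic toolkit as the paper (the Gorenstein dual complex \eqref{E2.2.1}, minimality/degree counting, the relations of Lemma \ref{xxlem2.1}(g), and the domain hypothesis), but it organizes the argument differently. The paper proves (a) first, by observing that a zero row (resp.\ column) of $F$ or $G$ creates a superfluous free summand in the kernel of a differential of \eqref{E2.1.1} (resp.\ of its dual), and then deduces (b) from (a) by a change-of-generators trick: replacing $X$ by $MX$ with $\alpha^T$ a column of an invertible $M$ turns $F$ into $FM$, so $F\alpha^T=0$ would produce a zero column, contradicting (a). You instead prove (b) directly: a scalar vector killed by $F$ (or $G$, or killed on the other side) would, by exactness of \eqref{E2.1.1} or of its dual at the relevant spot, lie in the image of the adjacent differential, which sits in degrees $\geq 1$; this is a clean syzygy-plus-degree argument, and (a) drops out by specializing to standard basis vectors. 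For (c) the two column cases (columns of $F$ via $GF=0$, columns of $G$ via $x'G=0$) coincide with the paper's; for the row cases the paper simply says ``use the dual complex,'' whereas you give explicit arguments from $Fx^T=0$ (rows of $F$, using linear independence of $x_1,\dots,x_4$ in $B_1$) and from $GF=0$ together with (b) (rows of $G$), each time cancelling the common degree-one factor by the domain property. The net effect is that your version avoids the change-of-basis step and makes the ``dual complex'' cases explicit, at the cost of having to set up the dual resolution carefully as left multiplication by $x^T$, $F$, $G$, $x'$ --- which you do correctly, noting that only $\Ext^i_B(k,B)=0$ for $i\leq 3$ is needed for the interior exactness you use.
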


\begin{proof} (a) If a row of $F$ is zero, then $\ker
\partial_2$
contains a copy of $B(-2)$ and \eqref{E2.1.1} is not
exact;
that is a contradiction. So any row of $F$ is nonzero.
Suppose now 
a column of $F$ is zero. We consider the dual complex
of \eqref{E2.1.1}:
\begin{equation}
\label{E2.2.1}
0\leftarrow k_B(4) \leftarrow B(4)
\xleftarrow{\partial_4^\vee} 
B^{\oplus 4}(3)
\xleftarrow{\partial_3^\vee} B^{\oplus 6}(2)
\xleftarrow{\partial_2^\vee} B^{\oplus 4}(1)
\xleftarrow{\partial_1^\vee} B
\leftarrow 0
\tag{E2.2.1}
\end{equation}
where each $B^{\oplus n}$ is an $n$-column right free
$B$-module.
This complex is a free resolution of $k_B(4)$ by the
Gorenstein
condition (AS2). The map $\partial_2^\vee$ is the left
multiplication 
by $F^T$. If some column of $F$ is zero, then some
row of $F^T$
is zero and $\ker \partial_2^\vee$ contains some a
copy of 
$B(1)$. So complex \eqref{E2.2.1} is not exact 
at $B^{\oplus 4}(1)$, a contradiction. 

The same proof works for $G$.

(b) Let $M$ be a $4\times 4$ non-singular matrix such
that $\alpha^T$
is the first column of $M$. Replace
$X:=\{x_1,x_2,x_3,x_4\}^T$ by 
another generating set $X':=M X$ will change $F$ to
$F':=FM$. The 
first column of $F'$ is zero if $F\alpha^T=0$.
This contradicts
with part (a). So $F\alpha^T\neq 0$. Similarly,
$G\beta^T\neq 0$.
For $\beta F\neq 0$ and $\alpha G\neq 0$ we use the
dual
complex of \eqref{E2.1.1}.

(c) If the dimension of the subspace spanned by the first
column of $F$ is 1 (which can not be zero by part (b)), 
then $f_{i1}=\beta_i v$ for some
$\beta_i\in k$ and $0\neq v\in B_1$. Then $G\beta^T
v=0$ for 
$\beta=(\beta_1, \cdots,\beta_6)$. Since $B$ is a
domain, we have 
$G\beta^T=0$, which contradicts with part (b). 

If the dimension of the spanned by the first column of
$G$ is 1,
then $g_{i1}=\alpha_i v$ for some $\alpha_i\in k$ and
$0\neq v\in B_1$. 
Then $(x'_1,x'_2,x'_3,,x'_4)\alpha^T v=0$. Since
$B$ is a 
domain $(x'_1,x'_2,x'_3,,x'_4)\alpha^T=0$. This
implies that
$x'_1,x'_2,x'_3,,x'_4$ are $k$-linearly dependent, a
contradiction.

By using the dual complex of \eqref{E2.1.1} we can
prove the assertion
for the rows of $F$ and $G$. 
\end{proof}

Lemma \ref{xxlem2.2} can be used to show some graded 
algebras are not regular. Here is an example.

\begin{proposition}
\label{xxprop2.3} Let $B$ be a graded domain generated by
elements $x_1,x_2,x_3,x_4$. Suppose $B$ has 
the 6 quadratic relations of following form:
\begin{align}
x_1x_4&=qx_4x_1, q\in k;\notag\\
x_4^2&=f(x_1,x_2,x_3)\neq g(x_1,x_2,x_3) x_1;\notag\\
{\text {and }} 4 &\text{    other relations only involving
}x_1,x_2,x_3.\notag
\end{align}
Then $B$ is not regular of dimension four.
\end{proposition}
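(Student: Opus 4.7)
The plan is to argue by contradiction: suppose $B$ is Artin-Schelter regular of dimension four. Then by Lemma~\ref{xxlem2.1}, $B$ is of type $(14641)$ with matrices $F$ ($6\times 4$) and $G$ ($4\times 6$) in its minimal free resolution satisfying $GF=0$, and by Lemma~\ref{xxlem2.2}(c), every column of $G$ must span a subspace of $B_1$ of dimension at least two. The contradiction will come from showing that the second column of $G$ is forced to lie in the one-dimensional subspace $kx_1$.

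First I would compute the fourth column of $F$ explicitly from the given relations. Writing each relation as $r_i=\sum_j f_{ij}x_j$ with $f_{ij}\in B_1$, only $r_1$ and $r_2$ contribute to $f_{i4}$: they give $f_{14}=x_1$ and $f_{24}=x_4$, while $f_{i4}=0$ for $i=3,4,5,6$ since those relations do not involve $x_4$. Hence the fourth column of $F$ equals $(x_1,x_4,0,0,0,0)^T$, and for each row $\gamma_i=(g_{i1},\ldots,g_{i6})$ of $G$, the equation $\gamma_i F=0$ applied to this column yields the identity $g_{i1}x_1+g_{i2}x_4=0$ in $B$.

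Next I would lift this identity to $T(B_1)_2$ and express the left-hand side as a $k$-linear combination of the six relations. The monomials $x_2x_4$ and $x_3x_4$ do not occur in any of the six relations, precisely because $r_3,\ldots,r_6$ involve only $x_1,x_2,x_3$ and $r_1,r_2$ contain the $x_4$-monomials $x_1x_4$, $x_4x_1$, $x_4^2$ only. Tracking the coefficients of $x_2x_4$ and $x_3x_4$ forces the $x_2$- and $x_3$-components of $g_{i2}$ to vanish, so $g_{i2}\in kx_1+kx_4$. Writing $g_{i2}=bx_1+cx_4$ and substituting $x_1x_4=qx_4x_1$ and $x_4^2=f$ reduces the identity to $(g_{i1}+bqx_4)x_1=-cf$ in $B$.

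The main obstacle, and the step where the hypothesis $f\ne g(x_1,x_2,x_3)x_1$ is essential, is ruling out $c\ne 0$. If $c\ne 0$, then $f=hx_1$ in $B$ for some $h\in B_1$; splitting $h=h'+dx_4$ with $h'\in\mathrm{span}(x_1,x_2,x_3)$ and expanding yields $f=h'x_1+dx_4x_1$ in $B_2$. Using the basis decomposition $B_2=A_2\oplus\mathrm{span}(x_2x_4,x_3x_4,x_4x_1,x_4x_2,x_4x_3)$, where $A$ denotes the subalgebra generated by $x_1,x_2,x_3$, and noting that $f\in A_2$ while $x_4x_1$ lies in the complementary $x_4$-part, one concludes $d=0$, hence $f=h'x_1$ with $h'\in A_1$, contradicting the hypothesis. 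Therefore $c=0$ for every row of $G$, the second column of $G$ is contained in $kx_1$, and this violates Lemma~\ref{xxlem2.2}(c), completing the contradiction.
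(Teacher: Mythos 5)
Your proposal is correct and takes essentially the same route as the paper's own proof: read off the fourth column of $F$ as $(x_1,x_4,0,0,0,0)^T$, use $GF=0$ to get $g_{i1}x_1+g_{i2}x_4=0$, deduce $g_{i2}\in kx_1+kx_4$ from the shape of the six relations, use the hypothesis on $f$ to kill the $x_4$-component, and contradict Lemma \ref{xxlem2.2}(c) via the second column of $G$. Your explicit decomposition of $B_2$ merely spells out a step the paper leaves implicit (with the tiny addendum that $dx_4x_1=0$ forces $d=0$ because $B$ is a domain), so there is no substantive difference.
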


\begin{proof} Suppose on the contrary that $B$ is regular of 
dimension four. Then we can use Lemma \ref{xxlem2.2} and 
use the notations introduced there. The form of the relations 
implies that
$$F=\begin{pmatrix}
qx_4& 0 & 0 & x_1\\
*   & * & * & x_4\\
*   & * & * & 0  \\
*   & * & * & 0  \\
*   & * & * & 0  \\
*   & * & * & 0  
\end{pmatrix}$$
where all $*$ are in $kx_1+kx_2+kx_3$. Since $GF=0$,
the 4th column of this matrix equation gives
$g_{i1}x_1+g_{i2}x_4=0$ for all $i=1,2,3,4$. Looking
back at the 6 relations of $B$, one sees that only
the first two relations have the term $gx_4$. This
implies that $g_{i2}=a_ix_1+b_ix_4$ for 
some $a_i,b_i\in k$. But by the first two relations
$$(a_ix_1+b_ix_4)x_4=a_iqx_4x_1+b_if(x_1,x_2,x_3)$$
where the right-hand side is not of the form $g'x_1$ 
unless $b_i=0$ for all $i$. Therefore 
$g_{i1}x_1+g_{i2}x_4=0$ implies that $b_i=0$ for all $i$. 
Now the space spanned by the second row of $G$ is $kx_1$,
which is 1-dimensional. We obtain a contradiction by
Lemma \ref{xxlem2.2}(c). Therefore $B$ is not regular of
dimension four. 
\end{proof}

Next we want to show that if a right double extension
is a regular domain of type (14641), then it is automatic
a double extension.

\begin{lemma}
\label{xxlem2.4}
Let $A$ be a connected graded algebra and let 
$B=A_P[y_1,y_2;\sigma,\delta,\tau]$ be a right double
extension of $A$. If $B$ is a regular domain of type 
(14641). Then $A$ is a regular domain of dimension 2 
with Hilbert series $(1-t)^{-2}$, namely, $A$ is 
isomorphic to either $k_{p}[x_1,x_2]$ or $k_{J}[x_1,x_2]$.  
\end{lemma}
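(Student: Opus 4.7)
The strategy is a Hilbert series computation followed by an appeal to the classification of Artin--Schelter regular algebras of global dimension two. Since $B$ is of type (14641), Lemma \ref{xxlem2.1} gives that $B$ is generated by four degree-one elements, so $\dim B_1 = 4$, and the alternating Euler characteristic of \eqref{E2.1.1} yields $H_B(t) = (1-t)^{-4}$.

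The first task will be to show $\deg y_1 = \deg y_2 = 1$. By Definition \ref{xxdefn1.3}(aiii), $B$ is left free over $A$ on the basis $\{y_1^{n_1} y_2^{n_2}\}$, so
$$B_1 = A_1 \oplus \bigoplus_{i:\, \deg y_i = 1} k y_i.$$
If some $\deg y_i$ were strictly greater than one, then $y_i$ would be a basis element not appearing in $B_1$ and would therefore lie outside the subalgebra of $B$ generated by $B_1$ (which is contained in the subalgebra generated by $A$ together with those $y_j$ of degree one). This would contradict the fact that $B$ is generated in degree one. Hence $\deg y_1 = \deg y_2 = 1$, and $\dim A_1 = \dim B_1 - 2 = 2$.

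With the degrees pinned down, the left free $A$-module structure factors the Hilbert series as $H_B(t) = H_A(t)(1-t)^{-2}$, so $H_A(t) = (1-t)^{-2}$. Being a subring of the domain $B$, $A$ is itself a connected graded domain, and by the section-wide hypothesis it is generated in degree one. Since it has exactly two generators, a dimension count in degree two (where $\dim A_2 = 3$ and $\dim T(A_1)_2 = 4$) produces a unique up-to-scalar quadratic defining relation.

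The final step will invoke the classification of Artin--Schelter regular algebras of global dimension two generated in degree one \cite[Theorem 0.2]{Zh2}: a connected graded domain with two degree-one generators and Hilbert series $(1-t)^{-2}$ is such an algebra, and must be isomorphic either to $k_p[x_1, x_2]$ for some nonzero $p \in k$, or to the Jordan plane $k_J[x_1, x_2]$. The delicate point will be the degree-one argument for $y_1, y_2$; once that is established, the Hilbert series factorization and the invocation of the dimension-two classification finish the lemma.
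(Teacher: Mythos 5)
Your reduction to $\deg y_1=\deg y_2=1$ is the step that fails. From \eqref{R2}, for $r\in A$ one has $y_jr=\sigma_{j1}(r)y_1+\sigma_{j2}(r)y_2+\delta_j(r)$, so the subalgebra generated by $A$ together with the degree-one $y_j$'s is \emph{not} contained in $\sum_n Ay_j^n$: right multiplication of a degree-one $y_j$ by elements of $A_1$ produces a term $\sigma_{ji}(r)y_i$, and if $\deg y_i=2$ the coefficient $\sigma_{ji}(r)$ has degree $0$, i.e.\ can be a nonzero scalar. In that case $y_i=\sigma_{ji}(r)^{-1}\bigl(y_jr-\sigma_{jj}(r)y_j-\delta_j(r)\bigr)$ lies in the subalgebra generated by $B_1$, so ``basis element not appearing in $B_1$'' does not imply ``outside the subalgebra generated by $B_1$,'' and no contradiction with generation in degree one is obtained. (Your argument does dispose of the case where both $y_i$ have degree $\geq 2$, since then the subalgebra generated by $B_1$ sits inside $A$; the mixed case is exactly the one it cannot handle.) The paper does not attempt this: it reads Definition \ref{xxdefn1.3}(aiii) with $y_1,y_2\in B_1$ — which is the situation in every application of Lemma \ref{xxlem2.4} — so that $H_B(t)=H_A(t)(1-t)^{-2}$ is immediate. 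If you insist on allowing arbitrary positive degrees for the $y_i$, you need a genuinely different argument (or an extra hypothesis), not the one you gave; since you yourself flag this as the delicate point on which the rest rests, it is a real gap in your write-up.

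Once $\deg y_i=1$ is granted, your route is essentially the paper's: $H_A(t)=(1-t)^{-2}$, then identify $A$ as a quantum or Jordan plane. Two further points of comparison. First, the paper does not assume $A$ is generated in degree $1$: it introduces the subalgebra $A'$ generated by $A_1$, shows that a graded \emph{domain} on two degree-one generators with a quadratic relation is $k\langle x_1,x_2\rangle/(x_2x_1-p_{12}x_1x_2-p_{11}x_1^2)$ with $p_{12}\neq 0$ (citing \cite[Lemma 3.7]{KKZ}), hence $H_{A'}(t)=(1-t)^{-2}=H_A(t)$ and $A=A'$. You instead invoke the section-wide convention; that is defensible but weaker, and it sidesteps a point the authors deliberately prove, since the lemma is applied (e.g.\ in Theorem \ref{xxthm2.8}(i)) to a base ring that is only assumed connected graded. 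Second, the fact you need at the end is not really \cite[Theorem 0.2]{Zh2}, which classifies algebras already known to be regular of dimension two; it is the KKZ statement just quoted, combined with the Hilbert series to rule out higher relations (this is also what makes your quadratic relation ``defining''). These last two items are easily repaired; the degree-one argument is the one that needs to be replaced or removed.
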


\begin{proof} Since $H_B(t)=(1-t)^{-4}$ and $B$ is a
free $A$-module with basis 
$\{y_1^{n_1}y_2^{n_2}\}_{n_1,n_2\geq 0}$,
we see that $H_A(t)=(1-t)^{-2}$.

Let $A'$ be the subalgebra of $A$ 
generated by the elements of degree 1. Then $A'$ is a 
domain and generated by two elements of degree 1, say
$x_1,x_2$. Since $H_A(t)=(1-t)^{-2}$, $A$ (and hence 
$A'$) has at least one relation in degree 2. Any graded 
domain generated by two elements with at least one 
relation in degree 2 is isomorphic to $k\langle
x_1,x_2\rangle/(x_2x_1-p_{12}x_1x_2-p_{11}x_1^2)$
for some $p_{ij}\in k$ with $p_{12}\neq 0$ (this 
is well-known and a proof is given in \cite[Lemma 3.7]{KKZ}). 
Therefore $A'=k\langle x_1,x_2\rangle/(x_2x_1-p_{12}x_1x_2
-p_{11}x_1^2)$ and hence $H_{A'}(t)=(1-t)^{-2}$.
Thus $A$ and $A'$ has the same Hilbert series, whence 
$A=A'$. In particular, $A$ (and hence $B$) has a
relation
$$x_2x_1=p_{12}x_1x_2+p_{11}x_1^2.$$
After a linear transformation $(p_{12},p_{11})$ can be
chosen to be either $(p,0)$ or $(1,1)$ that corresponds
to regular algebras $k_{p}[x_1,x_2]$ and $k_{J}[x_1,x_2]$ 
respectively.
\end{proof}

Let $V$ denote the vector space $kx_1+kx_2$ and $W$ denote 
the vector space $V+ky_1+ky_2$. By Lemma \ref{xxlem2.4},
$B$ contains a relation in $V\otimes V$. Recall that a 
right double extension $B$ has a relation (R1):
$$y_2y_1=p_{12}y_1y_2+p_{11}y_1^2+\tau_{1}y_1+\tau_{2}y_2
+\tau_{0}.$$
The condition (R2) implies that there are 4 relations
in $V\otimes W+W\otimes V$.

\begin{lemma}
\label{xxlem2.5}
Let $B$ be a regular domain of type (14641).
Suppose that $B$ is generated by $x_1,x_2,y_1,y_2$ 
satisfying the following quadratic relations:
\begin{enumerate}
\item[(i)]
$f_1=0$ for some $f_1\in V\otimes V$.
\item[(ii)] 
four relations $f_i=0$ for $i=2,3,4,5$ where 
$f_i\in V\otimes W+W\otimes V$.
\item[(iii)]
one relation $f_6$ of the modified form of 
\textup{(R1)} 
\begin{equation}
\label{E2.5.1}
y_2y_1=p_{12}y_1y_2+p_{11}y_1^2+\tau_{1}y_1+\tau_{2}y_2
+y_1\tau_{1}'+y_2\tau_{2}'+\tau_{0}
\tag{E2.5.1}
\end{equation}
with $p_{12},p_{11}\in k$ and $0\neq p_{12}$, and where 
$\tau_{1},\tau_{2},\tau_{1}',\tau_{2}'\in V$ and 
$\tau_{0}\in V\otimes V$.
\end{enumerate}
Then
\begin{enumerate}
\item
$VW=WV$ in $B$.
\item
Let $A$ be the subalgebra of $B$ generated by $V$.
Then $Ay_1+Ay_2+A=y_1A+y_2A+A$.
\item
$B$ is a double extension $A_P[y_1,y_2;\sigma,\delta,\tau]$.
In particular, $\sigma$ is invertible.
\end{enumerate}
\end{lemma}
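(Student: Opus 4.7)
The plan is to establish part~(a) via Hilbert-series/dimension arguments in $B_2$, then derive (b) by induction on degree, and (c) by checking Definition~\ref{xxdefn1.3}.

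For (a), first $\dim B_2=10$ from type~(14641). The quotient $B/(x_1,x_2)$ kills $f_1,\ldots,f_5$ (which lie in $V\otimes W+W\otimes V$) and all the tails of $f_6$ (since $\tau_i,\tau_i'\in V$ and $\tau_0\in V^2$), leaving only $y_2y_1-p_{12}y_1y_2-p_{11}y_1^2$; hence $B/(x_1,x_2)\cong k\langle y_1,y_2\rangle/(y_2y_1-p_{12}y_1y_2-p_{11}y_1^2)$ is Artin--Schelter regular of dimension~$2$ with Hilbert series $(1-t)^{-2}$, so $\dim(x_1,x_2)_2=7$. In the decomposition $T(W)_2=V^{\otimes 2}\oplus V{\otimes}Y\oplus Y{\otimes}V\oplus Y^{\otimes 2}$, the $Y^{\otimes 2}$-projection of $f_6$ is nonzero (as $p_{12}\neq 0$), so any combination of $\{f_i\}$ lying in $V^{\otimes 2}+V{\otimes}Y+Y{\otimes}V$ has zero $f_6$-coefficient; therefore $R\cap(V^{\otimes 2}+V{\otimes}Y+Y{\otimes}V)=\operatorname{span}(f_1,\ldots,f_5)$ is $5$-dimensional, giving $\dim(VW+WV)=12-5=7=\dim(x_1,x_2)_2$, so $VW+WV=(x_1,x_2)_2$.

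The crux is upgrading this to $VW=WV$. Since both $\dim VW,\dim WV\leq 7$ and $\dim(VW+WV)=7$, it suffices to show $\dim VW=7$, equivalently that $\pi_{Y{\otimes}V}(f_2),\ldots,\pi_{Y{\otimes}V}(f_5)\in Y{\otimes}V\cong k^4$ are linearly independent; the symmetric claim for $\pi_{V{\otimes}Y}$ follows identically. Suppose not: then some nontrivial $r=\sum_{i=2}^5\alpha_if_i$ lies in $V^{\otimes 2}+V{\otimes}Y$. A rank-$1$-tensor argument---every $2$-dimensional subspace of $V\otimes V\cong k^4$ meets the Segre locus $\PP V\times\PP V$ in $\PP^3$ over $\bar k$, producing a nonzero rank-$1$ tensor $v\otimes v'\in V^{\otimes 2}\cap R$ with $vv'=0$ in $B$, contradicting $B$ being a domain---forces $\dim(V^{\otimes 2}\cap R)\leq 1$, so $V^{\otimes 2}\cap R=kf_1$. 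Consequently $r\notin V^{\otimes 2}$, and writing $r=x_1u_1+x_2u_2$ with $u_1,u_2\in W$, both $u_i$ must be nonzero by the domain property. The remaining step---ruling out the existence of such a mixed $r$---is the main obstacle; it is obtained by combining this factoring picture with the column-independence statement in Lemma~\ref{xxlem2.2}(b) applied to the syzygy matrix $F$, exploiting the specific forms of $f_1$ and $f_6$ (in particular the fact that $f_{1,1},f_{1,2}$ are linearly independent in $V$ by Lemma~\ref{xxlem2.2}(c), and that the rows of $F$ corresponding to columns $3,4$ modulo $V$ are supported only in row~$6$). Once the independence is established, $\dim VW=\dim WV=7$ and so $VW=WV$.

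Part~(b) follows by induction on $\deg a$: for $a\in V$, $y_ia\in Y{\cdot}V\subseteq WV=VW\subseteq A+Ay_1+Ay_2$ by (a); for $a=a'x$ with $a'\in A_{n-1}$ and $x\in V$, the inductive hypothesis gives $y_ia'\in A_n+A_{n-1}y_1+A_{n-1}y_2$, and right-multiplying by $x$ and reapplying (a) yields the claim. The opposite inclusion follows symmetrically. For (c), use (b) to rewrite $y_1\tau_1'+y_2\tau_2'$ in the form $\tau_1''y_1+\tau_2''y_2+\tau_0''$ (as in the proof of Lemma~\ref{xxlem1.4}), putting $f_6$ into the clean form~\eqref{R1}. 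The four relations $f_2,\ldots,f_5$, after using (a), take the~\eqref{R2} form and define maps $\sigma$ and $\delta$; the compatibility relations (R3) on generators of $A$ hold automatically because $f_1,\ldots,f_6=0$ in $B$. Applying Proposition~\ref{xxprop1.6}(a) gives that $B$ is a right double extension of $A$, and the symmetric argument on the left yields a left double extension with the same generating set $\{y_1,y_2\}$. Hence $B=A_P[y_1,y_2;\sigma,\delta,\tau]$ is a double extension, and $\sigma$ is invertible by \cite[Lemma~1.9]{ZZ}.
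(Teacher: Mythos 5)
Your reduction of part (a) to a degree-two dimension count is fine as far as it goes: $\dim B_2=10$, the space of quadratic relations has dimension $6$, the intersection of that space with $V\otimes W+W\otimes V$ is $\operatorname{span}(f_1,\dots,f_5)$ because $p_{12}\neq 0$ forces the $f_6$-coefficient to vanish, hence $\dim(VW+WV)=7$; the Segre-line argument giving $R\cap V^{\otimes 2}=kf_1$ in a domain is also correct (the aside that $B/(x_1,x_2)$ is Artin-Schelter regular of dimension two is not a hypothesis of this lemma and is unjustified where you state it, but your $12-5=7$ count makes it unnecessary). The valid reformulation is then: $VW=WV$ holds iff the projections of $f_2,\dots,f_5$ to $Y\otimes V$, and symmetrically to $V\otimes Y$, are linearly independent. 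But your argument stops exactly where the real content of the lemma lies. Ruling out a nontrivial combination $r=\sum_{i=2}^5\alpha_if_i\in V\otimes W$ is not a formal consequence of Lemma \ref{xxlem2.2}(b) and the shape of $F$: a relation $x_1u_1+x_2u_2=0$ with $u_1,u_2\in W$ nonzero is not by itself absurd in a domain, and the sentence ``it is obtained by combining this factoring picture with the column-independence statement in Lemma \ref{xxlem2.2}(b)'' does not produce a contradiction. This is precisely the step the paper's proof is devoted to: assuming, say, $Vy_1+Vy_2+V^2\subsetneq y_1V+y_2V+V^2$, one introduces the maximal subspace $V_0\subset V$ with $V_0y_1+V_0y_2\subset WV$ (so $\dim V_0\leq 1$) and uses the \emph{second} syzygy matrix $G$ via $GF=0$: the third and fourth columns give $g_{i6}(p_{12}y_1+\tau_2)\in WV$ and $g_{i6}(-y_2+p_{11}y_1+\tau_1)\in WV$, one checks $g_{i6}\in V$ (only $f_6$ involves $y_1y_2$), and since $p_{12}\neq 0$ this forces $g_{i6}\in V_0$ for all $i$, so the sixth column of $G$ spans a space of dimension at most one, contradicting Lemma \ref{xxlem2.2}(c). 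Some argument of this kind, exploiting the Gorenstein symmetry of the resolution through $G$ and not just $F$, is needed; as written, part (a) -- and with it (b) and (c) -- is not established.

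A secondary gap is in (c): to invoke Proposition \ref{xxprop1.6}(a) you must first know that $\sigma$ and $\delta$ are well defined maps on $A$, which requires the sum $A+Ay_1+Ay_2$ to be direct so that the coefficients in $y_ir=\sigma_{i1}(r)y_1+\sigma_{i2}(r)y_2+\delta_i(r)$ are unique, and your claim that the constraints (R3) ``hold automatically'' in $A$ likewise presupposes such uniqueness; moreover Proposition \ref{xxprop1.6}(a) yields freeness of the abstractly presented algebra, not of $B$ itself, so a Hilbert-series comparison is still needed to identify the two. The paper sidesteps all of this by showing directly that the monomials $y_1^{n_1}y_2^{n_2}$ span $B$ over $A$, comparing $H_B(t)=(1-t)^{-4}$ with $H_A(t)=(1-t)^{-2}$ to obtain freeness on both sides, and then citing \cite[Lemma 1.9, Proposition 1.13]{ZZ} for invertibility of $\sigma$ and the two-sided double extension structure.
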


\begin{proof} (a) The assertion is equivalent to
equation 
$$Vy_1+Vy_2+V^2=y_1V+y_2 V+V^2.$$ 
Suppose this is 
not true. Then we have the following two cases: either
$$Vy_1+Vy_2+V^2\subsetneq y_1V+y_2 V+V^2,$$ 
or 
$$Vy_1+Vy_2+V^2\supsetneq y_1V+y_2 V+V^2.$$ 
Note that the relation \eqref{E2.5.1} is left-right symmetric
and that all other five relations are clearly left-right 
symmetric. By symmetry let us only consider the first case.
Let $V_0$ be the maximal subspace of $V$ such that 
$$V_0y_1+V_0y_2\subset WV=y_1V+y_2 V+V^2.$$
By the assumption of the first case, $\dim V_0\leq 1$. 
Write the six relations of $B$ as 
$$0=f_i=f_{i1}x_1+f_{i2}x_2+f_{i3}y_1+f_{i4}y_4$$
for $i=1,\cdots,6$. 
Let $F=(f_{ij})_{6\times 4}$ be the matrix defined as 
in Lemma \ref{xxlem2.1}(d).
The relation \eqref{E2.5.1} can be written as 
$$0=f_6=f_{61}x_1+f_{62}x_2+f_{63}y_1+f_{64}y_2$$
where $f_{63}=-y_2+p_{11}y_1+\tau_{1}$ and
$f_{64}=p_{12}y_1+\tau_{2}$.
Since all other five relations are elements 
in $W\otimes V+V\otimes W$, we have $f_{i3},
f_{i4}\in V$ for all $i=1,2,3,4,5$.
Let $G=(g_{ij})_{4\times 6}$ be defined as in 
Lemma \ref{xxlem2.1}(e).
Since $GF=0$ [Lemma \ref{xxlem2.1}(g)], the third 
and fourth columns of this
matrix equation implies that, for all $i=1,2,3,4$,
\begin{equation}
\label{E2.5.2}
\qquad
g_{i6}(p_{12}y_1+\tau_{2})=-\sum_{k=1}^5
g_{ik}f_{k3}\in WV
\tag{E2.5.2}
\end{equation}
\begin{equation}
\label{E2.5.3}
g_{i6}(-y_2+p_{11}y_1+\tau_{1})=-\sum_{k=1}^5
g_{ik}f_{k4}\in WV.\quad
\tag{E2.5.3}
\end{equation}
Since $p_{12}\neq 0$ and $\tau_{2}\in V$, \eqref{E2.5.2} implies 
that $g_{i6}y_1\in WV$. Since \eqref{E2.5.2} does not contain 
the term $y_1y_2$ and since \eqref{E2.5.1} has a nonzero term 
$p_{12}y_1y_2$, \eqref{E2.5.2} is a linear combination of relations
other than \eqref{E2.5.1}. So $g_{i6}\in V$. A linear combination
of \eqref{E2.5.2} and \eqref{E2.5.3} shows that $g_{i6}y_2\in 
WV$. By the definition of $V_0$,  $g_{i6}\in
V_0$ for all $i$, and hence the space spanned 
by the sixth column of $G$ has dimension
at most 1. This contradicts Lemma \ref{xxlem2.2}(c).
Therefore part (a) follows. 

(b) By part (a) and induction one sees that $V^nW=WV^n$
for all $n$. Hence $\sum_{n\geq 0} V^nW=\sum_{n\geq 0}
WV^n$. Since $A=\sum_{n\geq 0} V^n$, we have $AW=WA$, or 
$$Ay_1+Ay_2+A_{\geq 1}=y_1A+y_2A+A_{\geq 1}.$$ 
The assertion follows by adding $k$ to the above equation. 

(c) By Lemma \ref{xxlem2.4}, $A$ is a regular algebra of 
dimension 2 with Hilbert series $H_A(t)=(1-t)^{-2}$.

By part (a), the relation \eqref{E2.5.1} can 
be simplified to the form of (R1), namely
we may assume $\tau_{1}'=\tau_{2}'=0$. We claim that 

{\it every element $f$ in $B$ can be written as an 
element in $\sum_{n_1,n_2\geq 0} Ay_1^{n_1} y_2^{n_2}$.}

Let $\deg_y$ be the degree of an element with respect
to $y_1$ and $y_2$, namely, $\deg_y x_i=0$ and $\deg_y y_i=1$
for $i=1,2$. If $\deg_y f\leq 1$, the assertion follows from part (b). 
If $\deg_y f>1$, the assertion follows from the induction on 
$\deg_y$, part (b) and the relation (R1) (which is 
equivalent to \eqref{E2.5.1} after we proved part (b)). 

Since the Hilbert series of $B$ is $(1-t)^{-4}$, an easy 
computation shows that $\sum_{n_1,n_2\geq 0} Ay_1^{n_1}
y_2^{n_2}$ is a free $A$-module with basis 
$\{y_1^{n_1} y_2^{n_2}\}_{\{n_1,n_2\geq 0\}}$. By
Definition \ref{xxdefn1.3}, $B$ is a right double extension 
of $A$; so we can write $B=A_P[y_1,y_2;\sigma,\delta,\tau]$. 
By part (b) we have $Ay_1\oplus Ay_2\oplus A=y_1 A+y_2 A+ A$.
This implies that the Hilbert series of $y_1 A+y_2 A+ A$
is equal to the Hilbert series of $Ay_1\oplus Ay_2\oplus A$,
which is $(1+2t)(1-t)^{-2}$. By the $k$-dimensional counting,
$y_1 A+y_2 A+ A$ must be free over $A$ with basis $\{1,y_1,y_2\}$.
Hence 
$$Ay_1\oplus Ay_2\oplus A=Ay_1+ Ay_2+ A=y_1 A+y_2 A+ A=
y_1 A\oplus y_2 A\oplus A.$$ 
By \cite[Lemma 1.9]{ZZ}, $\sigma$ is invertible. Since
$p_{12}\neq 0$, by \cite[Proposition 1.13]{ZZ}, $B$ is
a double extension.
\end{proof}

In Lemma \ref{xxlem2.5} we assumed that $p_{12}\neq 0$. 
We show next that this condition is not too restrictive.

\begin{lemma} 
\label{xxlem2.6}
Let $B$ be a regular domain of type (14641) generated 
by $x_1,x_2,y_1,y_2$. Suppose $B$ has 
six quadratic relations satisfying the following conditions:
\begin{enumerate}
\item[(i)]
The first relation is $f_1=0$ for some $f_1\in V\otimes V$.
\item[(ii)]
There are four relations in $V\otimes W+W\otimes V$ such that
\begin{enumerate}
\item[(iia)] 
two of these are of the form $f_2'=0, f_3'=0$ 
where $f_2'=y_2x_1-h_2$ and 
$f_3'=y_2x_2-h_3$ for some $h_2,h_3\in V\otimes W$;
\item[(iib)]
and other two are $f_i'=0$ for $i=4,5$ where 
$f_i\in V\otimes W+W\otimes V$.
\end{enumerate}
\item[(iii)]
The last relation $f_6$ is of the form 
$$-y_2y_1+p_{12}y_1y_2+p_{11}y_1^2+h=0$$
for some $h\in W\otimes V+V\otimes W$.
\end{enumerate}
Then $B$ can not have three relations $f_i=0$ for
linearly independent elements $\{f_1,f_2,f_3\}
\subset W\otimes V$.
\end{lemma}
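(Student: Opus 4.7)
The plan is to argue by contradiction, assuming $\dim A_1 \geq 3$ where $A\subset W\otimes W$ is the $6$-dimensional space of relations and $A_1 := A \cap (W\otimes V)$. Setting $Y := ky_1+ky_2$ and writing $W = V\oplus Y$ gives the decomposition $W\otimes W = V\otimes V \oplus V\otimes Y \oplus Y\otimes V \oplus Y\otimes Y$, with $W\otimes V = V\otimes V \oplus Y\otimes V$.

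First I would parameterize $A_1$ in the given basis $\{f_1,f_2',f_3',f_4',f_5',f_6\}$. Only $f_6$ has a nonzero $Y\otimes Y$ component, since $f_1,\dots,f_5'$ lie in $V\otimes W + W\otimes V$ and so carry no $y_iy_j$ term. Because $W\otimes V$ has no $Y\otimes Y$ part, every $f\in A_1$ must have $f_6$-coefficient zero; on $\mathrm{span}(f_1,\dots,f_5')$ membership in $A_1$ further reduces to vanishing of the $V\otimes Y$ part. Writing $v_i := [f_i']_{V\otimes Y}\in V\otimes Y$ for $i=2,3,4,5$, we get $\dim A_1 = 1 + \dim\ker M$, where $M:k^4\to V\otimes Y$ sends $(c_2,c_3,c_4,c_5)$ to $\sum c_iv_i$. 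The assumption $\dim A_1\geq 3$ therefore forces $\dim\langle v_2,\dots,v_5\rangle \leq 2$; the target is to contradict this bound.

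Next I would set up an adapted basis for the matrix $F$: rows $1,2,3$ form a basis of a three-dimensional subspace of $A_1$ (so the last two columns of these rows vanish), row $6$ equals $f_6$, and rows $4,5$ are chosen with zero $Y\otimes Y$ component (which can be arranged by subtracting a suitable multiple of $f_6$, and places $\tilde f_k\in V\otimes W + W\otimes V$ for $k=4,5$). Then $\tilde f_{4,y_j},\tilde f_{5,y_j}\in V$, while $f_{6,y_1} = -y_2 + p_{11}y_1 + \tau_1$ and $f_{6,y_2} = p_{12}y_1 + \tau_2$ with $\tau_j\in V$. By Lemma~\ref{xxlem2.1}(g), each $R_{i,\ell} := \sum_k g_{ik}\otimes f_{k,\ell}$ lies in $A$; for $\ell\in\{y_1,y_2\}$ rows $1,2,3$ drop out, leaving only the $g_{i4},g_{i5},g_{i6}$ contributions.

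Finally I would extract the contradiction. The $Y\otimes Y$ part of $R_{i,y_1}$ reduces to $g_{i6}^Y\otimes(-y_2 + p_{11}y_1)$ and must be a scalar multiple of $-y_2\otimes y_1 + p_{12}y_1\otimes y_2 + p_{11}y_1\otimes y_1$; writing $g_{i6}^Y = \alpha_i y_1 + \beta_i y_2$, the $y_2\otimes y_2$ coefficient forces $\beta_i = 0$, and the $y_2\otimes y_1$ coefficient then forces both the scalar and $\alpha_i$ to vanish. Hence $g_{i6}\in V$ for every $i$. The $V\otimes Y$ parts of $R_{i,y_2}$ and $R_{i,y_1}$ are then $p_{12}\,g_{i6}\otimes y_1$ and $-g_{i6}\otimes y_2 + p_{11}\,g_{i6}\otimes y_1$, and since each $R_{i,\ell}$ has vanishing $f_6$-coefficient, both lie in $\langle v_2,\dots,v_5\rangle$. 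By Lemma~\ref{xxlem2.2}(c), $\{g_{i6}\}_{i=1}^4$ spans a subspace of dimension at least $2$ in $V$, hence all of $V$. Using $p_{12}\neq 0$ first yields $V\otimes ky_1 \subseteq \langle v_2,\dots,v_5\rangle$, and then the other expression yields $V\otimes ky_2 \subseteq \langle v_2,\dots,v_5\rangle$, so the span contains $V\otimes Y$ of dimension $4$, contradicting $\dim\langle v_2,\dots,v_5\rangle \leq 2$. The main obstacle is the book-keeping in the adapted basis of step two, which is what makes the $Y\otimes Y$ computation cleanly isolate the sixth column of $G$; once $g_{i6}\in V$ is established, Lemma~\ref{xxlem2.2}(c) finishes the argument immediately.
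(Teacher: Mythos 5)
Your setup is sound and most of the argument is correct: the reduction of the hypothesis to $\dim\langle v_2,\dots,v_5\rangle\le 2$, the adapted basis in which rows $4,5$ of $F$ have no $Y\otimes Y$ component, the observation that each entry of $GF$ lifts to an element of the $6$-dimensional relation space, and the resulting conclusion $g_{i6}\in V$ are all valid; indeed that last computation works for every value of $p_{12}$. The problem is the final step, where you use $p_{12}\neq 0$. This is not a hypothesis of the lemma: condition (iii) merely names the $y_1y_2$-coefficient of $f_6$ as $p_{12}$, and it may be zero. In the paper the case $p_{12}\neq 0$ is dispatched in one line via Lemma \ref{xxlem2.5}, and the whole point of Lemma \ref{xxlem2.6} is the case $p_{12}=0$; moreover Lemma \ref{xxlem2.7}(a) later uses Lemma \ref{xxlem2.6} precisely to prove $p_{12}\neq 0$, so assuming it here would also be circular in context. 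When $p_{12}=0$ your two inclusions degenerate: $R_{i,y_2}$ contributes nothing (its $V\otimes Y$ part is $p_{12}g_{i6}\otimes y_1=0$), and $R_{i,y_1}$ only gives $V\otimes k(p_{11}y_1-y_2)\subseteq\langle v_2,\dots,v_5\rangle$, a $2$-dimensional space, which is perfectly compatible with $\dim\langle v_2,\dots,v_5\rangle\le 2$. So no contradiction is reached in the case that carries the content of the lemma.

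To close the gap one has to do what the paper does for $p_{12}=0$: since the $g_{i6}$ span $V$, suitable linear combinations over $i$ of the equations $\sum_k g_{ik}f_{k3}=0$ produce two new defining relations of the form $h_4+x_1(-y_2+p_{11}y_1+q_3)=0$ and $h_5+x_2(-y_2+p_{11}y_1+q_3)=0$ with $h_4,h_5\in W\otimes V$; replacing the fourth and fifth relations by these, recomputing $F$, and combining the third and fourth columns of the new equation $GF=0$ yields $g_{i6}(-y_2+p_{11}y_1+q_3+p_{11}f_{64})=0$ in $B$, whence $g_{i6}=0$ for all $i$ because $B$ is a domain, contradicting Lemma \ref{xxlem2.2}. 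Note that this step uses the domain property multiplicatively in $B$, not just the tensor-level linear algebra your argument relies on, so the $p_{12}=0$ case is not a routine extension of your computation and must be supplied.
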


\begin{proof} If $p_{12}\neq 0$, by Lemma \ref{xxlem2.5},
$B$ is a double extension and $\sigma$ is invertible. In this 
case, it is easy to see that there is only one relation 
$f_1\in W\otimes V$.

In the rest of the proof, we assume $p_{12}=0$. We continue 
to use the notations introduced in Lemma \ref{xxlem2.1}.

Assume on the contrary that three of six relations of $B$ are 
of the form $f_i=0$ where $f_i\in W\otimes V$ for $i=1,2,3$. 
This implies that the matrix $F$ is of the form
$$\begin{pmatrix} \bullet  & \bullet        & 0 & 0 \\
\bullet  & \bullet        & 0 & 0 \\
\bullet  & \bullet        & 0 & 0 \\
\bullet  & \bullet        & f_{43} & *\\
\bullet  & \bullet        & f_{53} & * \\
\bullet  & \bullet        & -y_2+p_{11}y_1+q_3 & * 
\end{pmatrix}$$
where $\bullet$ denotes elements in $W$, and $*$ denotes elements 
in $V$, and $f_{43},f_{53},q_3\in V$. The third column of the 
matrix equation $GF=0$ gives rise to the equations
\begin{equation}
\label{E2.6.1}
g_{i4}f_{43}+g_{i5}f_{53}+
g_{i6}(-y_2+p_{11}y_1+q_3)=0
\tag{E2.6.1}
\end{equation}
for $i=1,2,3,4$. Since we assume $p_{12}=0$, any quadratic relation
of $B$ contains neither $y_1y_2$ nor $y_2^2$. This implies that
$g_{i6}\in V$ for all $i$. By Lemma \ref{xxlem2.2}(c),
the vector space $V'$ spanned by $g_{i6}$ for
$i=1,\cdots,4$ has dimension at least 2, whence 
$V'=V$. This means that there are at least two 
relations which can be derived from \eqref{E2.6.1}
with $g_{i6}\neq 0$. Up to linear transformation we may assume 
that the relations derived from \eqref{E2.6.1} are of the form
$$h_4+x_1(-y_2+p_{11}y_1+q_3)=0\quad\text{and}\quad
h_5+x_2(-y_2+p_{11}y_1+q_3)=0$$
where $h_4,h_5\in W\otimes V$. Denote these two relations as 
$f_4=0$ and $f_5=0$. Clearly, $\{f_4,f_5\}$ is linearly 
independent in the quotient space $(V\otimes W+W\otimes V)
/W\otimes V$. Recall that, for the first three 
relations $f_i=0$, $i=1,2,3$, we have $f_i\in W\otimes V$ and, 
for the sixth relation, $f_6$ contains a nonzero monomial 
$y_2y_1$. Hence $\{f_1,f_2,f_3,f_4,f_5,f_6\}$ are linearly 
independent, and we may use these relations as the defining 
relations of $B$. Hence the matrix $F$ becomes
$$\begin{pmatrix} \bullet  & \bullet        & 0 & 0 \\
\bullet  & \bullet        & 0 & 0 \\
\bullet  & \bullet        & 0 & 0 \\
\bullet  & \bullet        & p_{11}x_1 & -x_1\\
\bullet  & \bullet        & p_{11}x_2 & -x_2 \\
\bullet  & \bullet        & -y_2+p_{11}y_1+q_3 & f_{64} 
\end{pmatrix}$$
where $f_{64}\in V$. The third and fourth columns 
of the matrix equation $GF=0$ read as follows:
$$g_{i4}p_{11}x_1+g_{i5}p_{11}x_2+
g_{i6}(-y_2+p_{11}y_1+q_3)=0$$
and
$$g_{i4}(-x_1)+g_{i5}(-x_2)+
g_{i6}f_{64}=0.$$
Combining these two we obtain
$$g_{i6}(-y_2+p_{11}y_1+q_3+p_{11}f_{64})=0$$
which contradicts to the fact $B$ is a domain. 
Therefore we have proved that if $p_{12}=0$
then $B$ can not have three relations 
$f_1=0,f_2=0,f_3=0$ with linearly independent 
elements $\{f_1,f_2,f_3\}\subset W\otimes V$. 
\end{proof}

\begin{lemma}
\label{xxlem2.7}
Let $B$ be as in Lemma \ref{xxlem2.6}. Then 
\begin{enumerate}
\item
$p_{12}\neq 0$.
\item
$B$ is a double extension $A_{P}[y_1,y_2;\sigma,\delta,\tau]$ 
where $A$ is the subalgebra generated by $x_1$ and $x_2$.
\end{enumerate}
\end{lemma}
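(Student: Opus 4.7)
I will handle (b) first because it reduces directly to Lemma \ref{xxlem2.5} once part (a) is in hand. In the setup of Lemma \ref{xxlem2.6}, the relation $f_1$ lies in $V\otimes V$, the four relations $f_2',f_3',f_4',f_5'$ lie in $V\otimes W + W\otimes V$, and $f_6$ has the modified (R1) form. Thus, once we know $p_{12}\neq 0$, the hypotheses (i)--(iii) of Lemma \ref{xxlem2.5} are satisfied verbatim, and Lemma \ref{xxlem2.5}(c) yields the desired double extension structure $B = A_P[y_1,y_2;\sigma,\delta,\tau]$ with $A$ the subalgebra generated by $V$.

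For part (a), I argue by contradiction. Suppose $p_{12}=0$. The strategy is to produce three linearly independent quadratic relations lying in $W\otimes V$, contradicting Lemma \ref{xxlem2.6}. We already have $f_1\in V\otimes V\subset W\otimes V$, so it suffices to find two more. Using the notation $f_j = \sum_k f_{j,k} x_k$ of Lemma \ref{xxlem2.1} (with $x_3=y_1$, $x_4=y_2$), one inspects each row: for $j=1,\dots,5$ the coefficient of $y_2$ already lies in $V$ (the hypothesis that $h_2,h_3\in V\otimes W$ and $f_4',f_5'\in V\otimes W+W\otimes V$ forces this), and for $j=6$ we have $f_{6,4} = p_{12}y_1+\tau_2 = \tau_2\in V$ by the assumption $p_{12}=0$. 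Therefore the fourth-column identity in $GF=0$ can be lifted from $B_2$ to $W\otimes W$ and gives, for each $i=1,2,3,4$, an element
$$L_i \;:=\; \sum_{j=1}^{6} g_{i,j}\otimes f_{j,4} \;\in\; W\otimes V$$
which lies in the span of the six defining relations.

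The next step is a rerun, adapted to our setting, of the argument of Lemma \ref{xxlem2.6}. Reading the third-column identity of $GF=0$ for each $i$, the only term outside $W\otimes V$ is $g_{i,6}(-y_2+p_{11}y_1+\tau_1)$; since $p_{12}=0$, no defining relation of $B$ contains $y_1y_2$ or $y_2^2$, so the $(ky_1+ky_2)\otimes(ky_1+ky_2)$-part of $g_{i,6}(-y_2+p_{11}y_1)$ is forced to vanish, yielding $g_{i,6}\in V$. By Lemma \ref{xxlem2.2}(c), $\{g_{1,6},\dots,g_{4,6}\}$ spans a subspace of $V$ of dimension $\geq 2$, hence equals $V$; after a nonsingular row operation on $G$ we may assume $g_{1,6}=x_1$ and $g_{2,6}=x_2$. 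Then $L_1$ contains the leading monomial $x_1\otimes\tau_2$ and $L_2$ contains $x_2\otimes\tau_2$.

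The final step is to verify that $\{f_1,L_1,L_2\}$ is linearly independent in $W\otimes V$. The idea is that any nontrivial linear dependence would force a relation in $B$ of the shape $(-y_2+p_{11}y_1+\text{lower})\cdot v = 0$ for some $0\neq v\in B_1$, via the same kind of manipulation as at the end of the proof of Lemma \ref{xxlem2.6} (combining the third- and fourth-column equations and factoring out a nonzero element of $V$). Since $B$ is a domain, this forces a linear dependence among $x_1,x_2,y_1,y_2$, contradicting $\dim W=4$. Hence $\{f_1,L_1,L_2\}\subset W\otimes V$ are three linearly independent relations, which contradicts Lemma \ref{xxlem2.6}. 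The hard step is precisely this last linear independence verification: one has to rule out the degenerate possibilities ($\tau_2=0$, or cancellations among the non-$V\otimes V$ parts of the $L_i$) by exploiting the domain hypothesis together with the explicit structure of $G$ forced by Lemma \ref{xxlem2.2}.
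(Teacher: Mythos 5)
Your part (b) is fine and matches the paper (it is exactly ``part (a) plus Lemma \ref{xxlem2.5}(c)''), and your derivation of $g_{i6}\in V$ from the third column of $GF=0$ is also the paper's first step. But the core of part (a) --- exhibiting \emph{three linearly independent} relations of $B$ inside $W\otimes V$ so as to contradict Lemma \ref{xxlem2.6} --- is not actually proved. Your candidates are $f_1$ together with $L_1,L_2$ built from the fourth column of $GF=0$, and nothing you write guarantees their independence: all the right-hand factors $f_{j4}$ lie in the $2$-dimensional space $V$, so the $L_i$ can collapse (they may be multiples of $f_1$, of each other, or zero), the ``leading monomial $x_1\otimes\tau_2$'' is meaningless when $\tau_2=0$ and can be cancelled by the terms $g_{1j}\otimes f_{j4}$, $j\le 5$, even when $\tau_2\neq 0$. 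The sketched rescue is also unsupported: a dependence among $f_1,L_1,L_2$ is an identity of tensors in $W\otimes V$, and the factor $-y_2+p_{11}y_1+\cdots$ you want to pull out sits in the \emph{third}-column entry $f_{63}$, not in the fourth-column identities defining the $L_i$; the elimination trick at the end of the proof of Lemma \ref{xxlem2.6} worked only because rows $4,5$ of $F$ there had the special shape $(p_{11}x_1,-x_1)$, $(p_{11}x_2,-x_2)$, which is not available here. So as it stands the contradiction with Lemma \ref{xxlem2.6} is not reached.

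For comparison, the paper sidesteps the independence problem entirely. After normalizing so that $y_2x_1,y_2x_2$ occur only in $f_2',f_3'$, the monomials $y_1y_2,y_2^2$ appear in no relation (since $p_{12}=0$), and reading the first, second and third columns of $GF=0$ gives $g_{i2},g_{i3},g_{i6}\in V$ for all $i$ (your argument for $g_{i6}$, applied to the columns where $y_2+q_1$ and $y_2+q_2$ sit). Then, instead of the $L_i$, one uses the dual relation $(x'_1,\dots,x'_4)G=0$: its entries in columns $2,3,6$, namely $\sum_i x_i'\otimes g_{ij}$, are relations of $B$ lying in $W\otimes V$, and their linear independence is automatic --- a nontrivial dependence $\sum_j\lambda_j\sum_i x_i'\otimes g_{ij}=0$ would force $G\lambda^T=0$, contradicting Lemma \ref{xxlem2.2}(b). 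That is the missing ingredient your route lacks; without it (or some substitute argument pinning down the span of $\{f_1,L_1,\dots,L_4\}$), the proof is incomplete.
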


\begin{proof}
(a) Suppose on the contrary that $p_{12}=0$. We use the 
form of six relations given in (i), (ii) and (iii) of
Lemma \ref{xxlem2.6}.

Without loss of generality, we 
may assume that the terms $y_2x_1,y_2x_2$ do not appear in 
$f_i$ for all $i\neq 2,3$. Thus we can write the $F$ as
follows:
$$F=\begin{pmatrix} *        & *        & 0 & 0 \\
                    y_2+ q_1 & *        & * & * \\
                    *        & y_2 +q_2 & * & * \\
                    *        & *        & * & * \\
                    *        & *        & * & * \\
                    *        & *    & -y_2+p_{11}y_1+q_3& *
\end{pmatrix}$$
where $q_i\in V$ and where all $*$ denotes elements in $V+ky_1$.
For each $i$, we consider the equation
$$0=\sum_{k=1}^6 g_{ik}f_{k3}=
\sum_{k=1}^5 g_{ik} f_{k3}+g_{i6}f_{63}$$ 
which comes from the the third column of
the matrix equation $GF=0$. Since $y_1y_2$ 
and $y_2^2$ do not appear in any of the 
relations and since $f_{k3}$ does not 
contain $y_2$ for all $k\neq 6$, $g_{i6}$ 
does not contain either $y_1$ or $y_2$. 
So $g_{i6}\in V$ for all $i$. Similarly 
one can show that $g_{i2},g_{i3}\in V$ for 
all $i$. The six relations of $B$ can 
also be obtained by the multiplication 
$(x'_1,x'_2,x'_3,x'_4)G$.
So three relations corresponding to 
columns $2,3,6$ of $(x'_1,x'_2,x'_3,x'_4)G$ 
is in $W\otimes V$. But this contradicts Lemma \ref{xxlem2.6}.
Therefore $p_{12}\neq 0$.

(b,c) Follows from part (a) and Lemma \ref{xxlem2.5}(c).
\end{proof}

Now we can prove the main result in this section.

\begin{theorem}
\label{xxthm2.8}
Let $B$ be a regular domain of type (14641). Suppose one of the
following conditions holds:
\begin{enumerate}
\item[(i)]
$B$ is a right double extension.
\item[(ii)]
there are $x_1,x_2\in B_1$ such that 
\begin{enumerate}
\item[(iia)] 
$B$ has a quadratic relation involving only $x_1,x_2$, and 
\item[(iib)] 
$B/(x_1,x_2)$ is regular of dimension 2.
\end{enumerate}
\end{enumerate}
Then $B$ is a double extension of a regular subalgebra of dimension
2. 
\end{theorem}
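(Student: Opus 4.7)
The plan is to reduce each of the two hypotheses to the setup of Lemma \ref{xxlem2.5}, or of Lemmas \ref{xxlem2.6}--\ref{xxlem2.7}, each of which concludes that $B$ is a double extension of a $2$-dimensional regular subalgebra. In both cases one writes $W = B_1 = V \oplus U$ with $V = kx_1 + kx_2$ and $U = ky_1 + ky_2$, and verifies that the six defining quadratic relations of $B$ can be chosen in the shape prescribed by those lemmas.

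For Case (i), write $B = A_{P}[y_1, y_2; \sigma, \delta, \tau]$. Lemma \ref{xxlem2.4} gives $A$ regular of dimension $2$, generated by two degree-$1$ elements $x_1, x_2$ satisfying a single relation $f_1 \in V \otimes V$; this is condition (i) of Lemma \ref{xxlem2.6}. The relation \textup{(R1)} supplies $f_6$ in the form required by (iii). The four relations \textup{(R2)} for $r = x_1, x_2$ all lie in $V \otimes W + W \otimes V$; those with $i = 2$ have the form $y_2 x_j - h_{j+1}$ with $h_{j+1} \in V \otimes W$, giving (iia), while those with $i = 1$ give (iib). Hence Lemma \ref{xxlem2.6} applies, and Lemma \ref{xxlem2.7} yields $p_{12} \neq 0$ together with the conclusion that $B$ is a double extension of $A$.

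For Case (ii), set $C = B/(x_1, x_2)$. Since $B$ has four degree-$1$ generators and $V$ is $2$-dimensional, $C$ is generated in degree $1$ by exactly two elements, so after a linear change of generators $C \cong k\langle y_1, y_2\rangle/(y_2 y_1 - p_{12} y_1 y_2 - p_{11} y_1^2)$ with $p_{12} \neq 0$. Lift $y_1, y_2$ to $B_1$; then $\{x_1, x_2, y_1, y_2\}$ generates $B$. The relation $f_1 \in V \otimes V$ from (iia) provides condition (i) of Lemma \ref{xxlem2.5}. The defining relation of $C$ lifts to $f_6 = y_2 y_1 - p_{12} y_1 y_2 - p_{11} y_1^2 + h$ with $h$ in the degree-$2$ part of the two-sided ideal $(V) \subseteq B$; hence $h \in V \otimes W + W \otimes V$, and decomposing along $V \otimes V \oplus V \otimes U \oplus U \otimes V$ writes $h$ in the form required by \eqref{E2.5.1}, verifying (iii).

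What remains is condition (ii) of Lemma \ref{xxlem2.5}: four relations in $V \otimes W + W \otimes V$. This I settle by a dimension count, which I expect to be the main technical point. Let $R \subset W \otimes W$ be the $6$-dimensional space of quadratic relations of $B$. Since $C$ is the quotient of $T(W)$ by the ideal generated by $V$ together with $R$, one has $\ker(T^2 W \to C_2) = R + (V \otimes W + W \otimes V)$, of dimension $16 - \dim C_2 = 13$; combined with $\dim(V \otimes W + W \otimes V) = 12$, this yields
$$\dim\bigl(R \cap (V \otimes W + W \otimes V)\bigr) = 6 + 12 - 13 = 5.$$
Since $f_1$ lies in this intersection, four further linearly independent relations $f_2, \ldots, f_5$ can be extracted, verifying (ii). Lemma \ref{xxlem2.5}(c) then gives that $B$ is a double extension of $A := k\langle V\rangle$, which is Artin-Schelter regular of dimension $2$ by Lemma \ref{xxlem2.4}. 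Case (i) is essentially direct bookkeeping from the right double extension structure; the substance lies in Case (ii), which is handled by the dimension count above.
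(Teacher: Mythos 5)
Your proposal is correct and follows essentially the same route as the paper: Case (i) is handled by verifying the hypotheses of Lemma \ref{xxlem2.6} and invoking Lemma \ref{xxlem2.7}, and Case (ii) by checking conditions (i)--(iii) of Lemma \ref{xxlem2.5}. Your dimension count $6+12-13=5$ simply makes explicit the paper's one-line observation that, since $B/(x_1,x_2)$ has only one relation, the remaining relations of $B$ can be chosen in $V\otimes W+W\otimes V$.
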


\begin{proof} (a) By Lemma \ref{xxlem2.4} $A$ is regular of dimension
2. Since $B$ is a right double extension, hypotheses (i,iia,iib,iii)
of Lemma \ref{xxlem2.6} hold. The assertion follows from 
Lemma \ref{xxlem2.7}.

(b) We would like to check (i,ii,iii) of Lemma \ref{xxlem2.5}.
(i) is clear. (iii) follows from the fact that $B/(x_1,x_2)$
is regular of dimension 2. Since $B/(x_1,x_2)$ has only
one relation, all other relations are $f_i=0$ with $f_i\in 
V\otimes W+W\otimes V$. Thus we verified (ii) of 
Lemma \ref{xxlem2.5}. By Lemma \ref{xxlem2.5}, $B$ is a 
double extension. 
\end{proof} 

\begin{proof}[Proof of Proposition \ref{xxprop0.3}]
If $B$ is a double extension of $A$, by 
\cite[Proposition 1.14]{ZZ}, there is an algebra
homomorphism $B\to B/(A_{\geq 1})$ and $B/(A_{\geq 1})$ is isomorphic 
to $k\langle y_1,y_2\rangle/(y_2y_1-p_{12}y_1y_2-p_{11}y_1^2)$.
Since $p_{12}\neq 0$, $B/(A_{\geq 1})$ is regular of
dimension 2. This is one implication. The other implication 
is Theorem \ref{xxthm2.8}.
\end{proof}

To conclude this section we prove Theorem \ref{xxthm0.2}.

\begin{theorem}
\label{xxthm2.9} Suppose that $B$ is a noetherian regular algebra of 
type (14641) and that $B$ has a ${\mathbb Z}^2$-grading such that 
$B_1=B_{01}\oplus B_{10}$ with both $B_{01}$ and
$B_{10}$ nonzero. Then $B$ is either a double extension
or an Ore extension $A[x;\sigma]$ for some regular algebra 
$A$ of dimension three.
\end{theorem}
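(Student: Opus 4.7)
The plan is a case analysis exploiting the $\mathbb{Z}^2$-graded refinement of the minimal free resolution \eqref{E2.1.1} of $k_B$. Since $B$ is $\mathbb{Z}^2$-graded and connected, this resolution inherits a $\mathbb{Z}^2$-grading, and the Gorenstein condition (AS2) becomes $\Ext^4_B(k,B)=k$ in some bidegree $(a,b)$ with $a+b=4$, so $P_4=B(-a,-b)$. Self-duality pairs the shifts in $P_i$ with those in $P_{4-i}$ via $(c,d)\leftrightarrow(a-c,b-d)$. Because $\dim B_1=4=\dim B_{10}+\dim B_{01}$ with both parts nonzero, the possibilities are $(\dim B_{10},\dim B_{01})\in\{(3,1),(2,2),(1,3)\}$; interchanging the two bigrading indices reduces this to the cases $(3,1)$ and $(2,2)$.

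The first sub-task is to pin down $(a,b)$ and the multiplicities $n_{20},n_{11},n_{02}$ of quadratic relations in bidegrees $(2,0),(1,1),(0,2)$, using the self-duality together with positivity (no shift has a negative component and no Hilbert coefficient of $B$ is negative). In case $(3,1)$ the only consistent choice is $(a,b)=(3,1)$ with $n_{20}=n_{11}=3$ and $n_{02}=0$, yielding $H_B(s,t)=(1-s)^{-3}(1-t)^{-1}$. In case $(2,2)$ it is $(a,b)=(2,2)$ with $n_{20}=n_{02}=1$ and $n_{11}=4$, yielding $H_B(s,t)=(1-s)^{-2}(1-t)^{-2}$.

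Treat case $(3,1)$ first. Let $y$ span $B_{01}$ and put $A=\bigoplus_m B_{m0}$, the subalgebra generated by $B_{10}$. The three relations in $B_{11}$ lie in the span of $\{x_iy, yx_i\}_{i=1}^3$; since $B$ is a domain, $\{x_1y,x_2y,x_3y\}$ is linearly independent, hence a basis of the $3$-dimensional $B_{11}$, and the relations solve uniquely as $yx_i=\sigma(x_i)y$ (a $\delta$-term is forbidden because $\delta(x_i)\in B_{20}$ has bidegree $(2,0)\neq(1,1)$). Dimension matching against $H_B(s,t)$ combined with injectivity of left and right multiplication by $y^n$ (again using that $B$ is a domain) gives $B_{mn}=A_my^n=y^nA_m$ for all $(m,n)$; this verifies $A_{\geq 1}B=BA_{\geq 1}$, $B/(A_{\geq 1})\cong k[y]$, and bilateral freeness of $B$ over $A$ with basis $\{y^n\}$. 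Hence $B$ is a $1$-extension of $A$, and Lemma \ref{xxlem1.2}(c) yields $B=A[y;\sigma,\delta]$, with $\delta=0$ forced by the bigrading. The standard fact that an Ore extension $A[y;\sigma]$ is AS regular of dimension $d+1$ iff $A$ is AS regular of dimension $d$ then gives that $A$ is AS regular of dimension $3$.

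Now case $(2,2)$. Bigraded homogeneity forces the single relation in $B_{20}$ to be a quadratic in $x_1,x_2$ alone. The $\mathbb{Z}^2$-decomposition identifies $B/(x_1,x_2)$, as an algebra, with the subalgebra $\bigoplus_n B_{0n}\subseteq B$, which is a connected graded domain generated by $y_1,y_2$ with one quadratic relation and Hilbert series $(1-t)^{-2}$; by the classification invoked in Lemma \ref{xxlem2.4} it is AS regular of dimension $2$. Hypotheses (iia) and (iib) of Theorem \ref{xxthm2.8} are thus in place, so $B$ is a double extension $A_P[y_1,y_2;\sigma,\delta,\tau]$. Bigraded homogeneity again forces $\delta=0$ (since $\delta(x_j)\in B_{20}$ is incompatible with the bidegree-$(1,1)$ relations $y_ir=\sum_k\sigma_{ik}(r)y_k+\delta_i(r)$) and $\tau=0$ (since $\tau_iy_i$ and $\tau_0\in A$ have bidegrees different from $(0,2)$), so the double extension is trimmed. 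The main obstacle throughout is the combinatorial bigraded analysis that pins down $(a,b)$ and the $n_{ij}$; once that is done, the remaining verifications rely only on the bigrading and the domain property.
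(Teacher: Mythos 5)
Your proposal is correct in substance and reaches the theorem by a genuinely different route from the paper. The paper's proof is short because it outsources the structure theory: it first quotes \cite[Theorem 3.9]{ATV2} to get that $B$ is a domain, then quotes \cite[Proposition 3.5]{KKZ} to conclude that the diagonal subalgebras $B_{\mathbb{Z}\times 0}$ and $B_{0\times\mathbb{Z}}$ are regular of dimensions $(3,1)$, resp.\ $(2,2)$; after that the $(3,1)$ case is Lemma \ref{xxlem1.2}(c) and the $(2,2)$ case is Theorem \ref{xxthm2.8}(ii). You keep the same skeleton (case split on $(\dim B_{10},\dim B_{01})$, Ore extension in the $(3,1)$ case, Theorem \ref{xxthm2.8}(ii) in the $(2,2)$ case) but replace the appeal to \cite{KKZ} by a direct analysis of the $\mathbb{Z}^2$-graded minimal free resolution and its Gorenstein self-duality, which pins down the bigraded Hilbert series $(1-s)^{-3}(1-t)^{-1}$ or $(1-s)^{-2}(1-t)^{-2}$; in the $(3,1)$ case you then build the Ore structure by hand and recover regularity of $A$ afterwards from $A\cong B/(y)$ with $y$ normal and regular, rather than getting it up front. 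Your route buys independence from \cite{KKZ} at the price of the bigraded bookkeeping; the paper's route buys brevity.

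Three points need to be shored up. First, you use the domain property of $B$ repeatedly (it is also a hypothesis of Theorem \ref{xxthm2.8}) without justification; cite \cite[Theorem 3.9]{ATV2} as the paper does. Second, the determination of $(a,b)$ and $(n_{20},n_{11},n_{02})$ is the heart of your argument but is only asserted; it is true, and your tools do suffice, but the elimination of the spurious branches is not instantaneous: for instance in the $(2,2)$ case the option $(a,b)=(2,2)$, $n_{20}=n_{02}=0$, $n_{11}=6$ only produces a negative bigraded Hilbert coefficient around total degree $5$, so one must either push the coefficient computation that far, or compare with the total series $(1-t)^{-4}$ (the bad branches force $\dim B_{0n}$ or $\dim B_{n0}$ to grow like $2^n$ or $3^n$), or use the domain property (e.g.\ $y^2\neq 0$, and a $\geq 2$-dimensional space of quadratic relations on a two-dimensional generating space contains a rank-one tensor over an algebraically closed field). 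Third, the ``standard fact'' you invoke at the end of the $(3,1)$ case is needed in the nonstandard direction (regularity of $A[y;\sigma]$ forces regularity of $A$); justify it via descent of AS-regularity along the regular normal degree-one element $y$, or simply quote \cite[Proposition 3.5]{KKZ} for this step as the paper does. With these repairs your argument is complete.
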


\begin{proof} By \cite[Theorem 3.9]{ATV2}, $B$ is a domain. 
Hence it is a quantum polynomial ring in the sense of 
\cite[Definition 1.12]{KKZ}.

By \cite[Proposition 3.5]{KKZ}, both subalgebras 
$B_{{\mathbb Z}\otimes 0}$ and $B_{0\times {\mathbb Z}}$ are 
Koszul noetherian regular domains of dimension strictly smaller 
than four. If $\dim B_{10}=3$ and $\dim B_{01}=1$, then 
$A:=B_{{\mathbb Z}\otimes 0}$ is regular of dimension three and 
$C:=B_{0\times {\mathbb Z}}$ is regular of dimension 1. 
It is straightforward to show that there is an exact sequence 
of algebras
$$0\to A\to B\to C\to 0$$
satisfying Definition \ref{xxdefn1.1}(a,b,c). Thus $B$ is
$1$-extension of $A$. By Lemma \ref{xxlem1.2}(c), 
$B=A[x;\sigma,\delta]$. Since $B$ is ${\mathbb Z}^2$-graded, 
$\delta=0$.

If $\dim B_{10}=1$ and $\dim B_{01}=3$, a similar proof works.

It remains to consider $\dim B_{10}=\dim B_{01}=2$. By
By \cite[Proposition 3.5]{KKZ}, both $B_{{\mathbb Z}\otimes 0}$ 
and $B_{0\times {\mathbb Z}}$ are Koszul regular algebras of 
dimension 2 and $B_{{\mathbb Z}\otimes 0}\cong
B/((B_{0\times {\mathbb Z}})_{\geq 1})$. So we are in the 
situation of Theorem \ref{xxthm2.8}(ii) and hence $B$ is a 
double extension.
\end{proof}

\section{System C}
\label{xxsec3}

The goal of Sections 3 and 4 is to 
classify regular domains of dimension four of 
the form $A_{P}[y_1,y_2;\sigma]$ up to isomorphism, or 
equivalently, to classify $(P,\sigma)$ up to some equivalence 
relation. This is the first step toward
a more complete (but not finished) classification of 
$A_{P}[y_1,y_2;\sigma,\delta,\tau]$. As explained in the 
introduction, we are interested in double extensions that are 
not iterated Ore extensions. 

Since the base field $k$ is algebraically closed, $A$ is 
isomorphic to $k_Q[x_1,x_2]$ (see Lemma \ref{xxlem2.4}), which 
is either $k_q[x_1,x_2]$ with the relation $x_2x_1=q x_1x_2$ 
(in this case $Q=(q,0)$) or $k_{J}[x_1,x_2]$ with the relation 
$x_2x_1=x_1x_2+x_1^2$ (in this case $Q=J=(1,1)$). To state some
results uniformly we write $A$ as $k_Q[x_1,x_2]$ where, by 
definition, $Q=(q_{12},q_{11})$ and $k_Q[x_1,x_2]=k\langle
x_1,x_2\rangle/(x_2x_1=q_{11}x_1^2+q_{12}x_1x_2)$. But in the 
computation we will set $Q$ to be either $(1,1)$ or $(0,q)$.

Fix an $A$ as in the last paragraph. Let $\sigma: A\to M_2(A)$ 
be a graded algebra homomorphism. Write
\begin{equation}
\label{E3.0.1}
\sigma_{ij}(x_s)=\sum_{s=1}^2 a_{ijst} x_t
\tag{E3.0.1}
\end{equation}
for all $i,j,s=1,2$ and where $a_{ijst}\in k$.

Using \eqref{E3.0.1} we can re-write the relation \eqref{R2} 
of the algebra $A_P[y_1,y_2;\sigma]$ as follows (note that
in this case $\delta=0$). Setting $r=x_1$ and $x_2$ in \eqref{R2}, 
we have the following four relations.

\begin{align}
\label{MR11}
y_1x_1&=\sigma_{11}(x_1)y_1+\sigma_{12}(x_1)y_2\tag{MR11}\\     
&=a_{1111}x_1y_1+a_{1112}x_2y_1+a_{1211}x_1y_2+a_{1212}x_2y_2
\notag
\end{align}
\begin{align}
\label{MR12}
y_1x_2&=\sigma_{11}(x_2)y_1+\sigma_{12}(x_2)y_2\tag{MR12}\\     
&=a_{1121}x_1y_1+a_{1122}x_2y_1+a_{1221}x_1y_2+a_{1222}x_2y_2
\notag
\end{align}
\begin{align}
\label{MR21}
y_2x_1&=\sigma_{21}(x_1)y_1+\sigma_{22}(x_1)y_2\tag{MR21}
\\
&=a_{2111}x_1y_1+a_{2112}x_2y_1+a_{2211}x_1y_2+a_{2212}x_2y_2
\notag
\end{align}
\begin{align}
\label{MR22}
y_2x_2&=\sigma_{21}(x_2)y_1+\sigma_{22}(x_2)y_2\tag{MR22}\\
&=a_{2121}x_1y_1+a_{2122}x_2y_1+a_{2221}x_1y_2+a_{2222}x_2y_2
\notag
\end{align}
We call the above 4 relations {\it mixing relations}
between $x_i$ and $y_i$. The double extension 
$A_P[y_1,y_2;\sigma]$ also has two non-mixing relations:
\begin{align}
\label{NRx}
x_2x_1&=q_{12}x_1x_2+q_{11}x_1^2
\tag{NRx}\\
\label{NRy}
y_2y_1&=p_{12}y_1y_2+p_{11}y_1^2
\tag{NRy}
\end{align}

Let $\Sigma_{ij}$ be the matrix 
$$\begin{pmatrix}
a_{ij11}&a_{ij12}\\a_{ij21}&a_{ij22}\end{pmatrix}$$
and let
$$\Sigma=\begin{pmatrix}
\Sigma_{11}&\Sigma_{12}\\\Sigma_{21}&\Sigma_{22}\end{pmatrix}
=\begin{pmatrix}
a_{1111}&a_{1112}&a_{1211}&a_{1212}\\
a_{1121}&a_{1122}&a_{1221}&a_{1222}\\
a_{2111}&a_{2112}&a_{2211}&a_{2212}\\
a_{2121}&a_{2122}&a_{2221}&a_{2222}
\end{pmatrix}.
$$
One easily sees that $\sigma$ is invertible if and
only if the
matrix $\Sigma$ is invertible. Since we assume that
$\sigma$ is
a graded algebra homomorphism, $\sigma$ is uniquely
determined
by $\Sigma$. 
Another matrix closed related to $\Sigma$ is the
following.
Let $M_{ij}$ be the matrix
$$\begin{pmatrix}
a_{11ij}&a_{12ij}\\a_{21ij}&a_{22ij}\end{pmatrix}$$
and let
$$M=
\begin{pmatrix}
M_{11}&M_{12}\\M_{21}&M_{22}\end{pmatrix}.
$$
The matrix $M$ is obtained by re-arranging the entries of 
$\Sigma$. Sometimes it is convenient to use $M$ instead of 
$\Sigma$ when we make linear  transformation of 
$\{y_1,y_2\}$. An easy linear algebra exercise
shows that $\Sigma$ is invertible if and only if $M$
is invertible.

If we change the basis $\{x_1,x_2\}$ to
$x'_1=b_{11}x_1+b_{12}x_1,
x'_2= b_{21}x_1+b_{22}x_2$, then the matrix $\Sigma$
is changed to
a new $\Sigma$. The following lemma is clear. 

\begin{lemma}
\label{xxlem3.1} Let $X=\begin{pmatrix} x_1\\x_2\end{pmatrix}$ 
and $Y=\begin{pmatrix} y_1\\y_2\end{pmatrix}$.
\begin{enumerate}
\item
The 4 mixing relations
\textup{\eqref{MR11}-\eqref{MR22}} can be 
written as
$$y_iX=y_i\begin{pmatrix} x_1\\x_2\end{pmatrix} 
=\Sigma_{i1}\begin{pmatrix}
x_1y_1\\x_2y_1\end{pmatrix}  
+\Sigma_{i2}\begin{pmatrix}
x_1y_2\\x_2y_2\end{pmatrix}=
\Sigma_{i1}Xy_1
+\Sigma_{i2}Xy_2$$
for $i=1,2$.
\item
If $X$ is changed to $X'=
BX$ where $B=(b_{ij})_{2\times 2}$ is an invertible
matrix,
then $\Sigma'$ is equal to $\begin{pmatrix}
B&0\\0&B\end{pmatrix} \Sigma \begin{pmatrix}
B^{-1}&0\\0&B^{-1}\end{pmatrix}$.
\item
Suppose $Q=(1,0)$ (in this case the algebra $A$ is the
commutative
ring $k[x_1,x_2]$). After a linear transformation of
$X$, we may 
assume either that $a_{1212}=a_{1221}=0$ or that
$a_{1212}=0,
a_{1221}=1$.
\item
The 4 mixing relations can also be written as 
$$Yx_i=\begin{pmatrix} y_1\\y_2\end{pmatrix} x_i
=M_{i1}  \begin{pmatrix} x_1y_1\\x_1y_2\end{pmatrix}
+M_{i2} \begin{pmatrix} x_2y_1\\x_2y_2\end{pmatrix}
=M_{i1}  x_1Y +M_{i2} x_2Y$$
for $i=1,2$.
\item
If $Y$ is changed to $Y'=
BY$ where $B=(b_{ij})_{2\times 2}$ is an invertible
matrix,
then $M'$ is equal to $\begin{pmatrix}
B&0\\0&B\end{pmatrix} M \begin{pmatrix}
B^{-1}&0\\0&B^{-1}\end{pmatrix}$.
\end{enumerate}
\end{lemma}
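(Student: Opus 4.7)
The plan is to verify all five parts by direct matrix manipulation; the only non-bookkeeping ingredient is Jordan normal form, which enters in part (c). For (a), I expand $\sigma_{ij}(x_s)$ via \eqref{E3.0.1} in the right-hand sides of \eqref{MR11}--\eqref{MR22} and collect terms by whether the trailing factor is $y_1$ or $y_2$: the coefficients of $x_t y_j$ that appear are exactly the entries of the block $\Sigma_{ij}$, so packaging $X=(x_1,x_2)^T$ yields $y_i X = \Sigma_{i1} X y_1 + \Sigma_{i2} X y_2$. For (b), I substitute $X' = BX$ (so $X = B^{-1} X'$) into this identity and use that the entries of $B$ are scalars, hence commute past the $y_i$: this gives $y_i X' = B(y_i X) = \sum_j (B \Sigma_{ij} B^{-1}) X' y_j$, so $\Sigma'_{ij} = B \Sigma_{ij} B^{-1}$, and reassembling the four blocks produces exactly the claimed block-diagonal conjugation of $\Sigma$.

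Part (c) is the one with genuine content. Specializing to $Q=(1,0)$ and applying (b), conjugation of the block $\Sigma_{12}$ by any $B \in \GL_2(k)$ can be realized by a change of basis of $X$. Over the algebraically closed field $k$, Jordan normal form tells us $\Sigma_{12}$ is either similar to a diagonal matrix (in which case its off-diagonal entries vanish, giving $a_{1212}=a_{1221}=0$), or similar to a Jordan block $\bigl(\begin{smallmatrix}\lambda&1\\0&\lambda\end{smallmatrix}\bigr)$, which after a further conjugation by the swap $\bigl(\begin{smallmatrix}0&1\\1&0\end{smallmatrix}\bigr)$ takes the form $\bigl(\begin{smallmatrix}\lambda&0\\1&\lambda\end{smallmatrix}\bigr)$, giving $a_{1212}=0$ and $a_{1221}=1$.

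Parts (d) and (e) are the left-handed analogues of (a) and (b). Grouping the right-hand sides of \eqref{MR11} and \eqref{MR21} (respectively \eqref{MR12} and \eqref{MR22}) by whether the leading $x$-factor is $x_1$ or $x_2$, instead of by the trailing $y$-factor, produces $Y x_i = M_{i1} x_1 Y + M_{i2} x_2 Y$; this is the reason the entries of $M$ are obtained from those of $\Sigma$ by precisely the re-indexing specified in the definition of $M_{ij}$. The change-of-variables computation for (e) then runs identically to that for (b), with $Y$ in place of $X$ and $M$ in place of $\Sigma$. The one thing to be careful about throughout is not confusing the two packagings of the coefficients $\{a_{ijst}\}$ that give rise to $\Sigma$ and to $M$; beyond this bookkeeping, there is no real obstacle to overcome.
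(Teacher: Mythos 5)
Your verification is correct, and it is essentially the argument the paper has in mind: the paper simply declares this lemma ``clear,'' i.e.\ it is the same direct bookkeeping with the coefficients $a_{ijst}$, with part (c) following from the Jordan normal form of $\Sigma_{12}$ over the algebraically closed field $k$ (admissible here because $Q=(1,0)$ makes $A$ commutative, so any invertible linear change of the $x$-basis is allowed). Nothing is missing.
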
 

Since $\sigma$ is an algebra homomorphism, we have,
for $i,j,f,g$,
\begin{align}
\sigma_{ij}(x_f x_g)&=\sum_{p=1}^2
\sigma_{ip}(x_f)\sigma_{pj}(x_g)
\notag\\
&=\sum_{p,s,t=1}^2 (a_{ipfs} x_s)(a_{pjgt}x_t)
=\sum_{p,s,t=1}^2 (a_{ipfs}a_{pjgt})x_sx_t
\notag\\
&=(\sum_{p=1}^2 a_{ipf1}a_{pjg1}) x_1^2 +(\sum_{p}^2
a_{ipf1}a_{pjg2})x_1x_2
\notag\\
&\quad 
+(\sum_{p}^2 a_{ipf2}a_{pjg1})x_2x_1+(\sum_{p}^2
a_{ipf2}a_{pjg2})x_2^2
\notag
\end{align}
Using the relation $x_2x_1=q_{11}x_1^2+q_{12}x_1x_2$ in $A$, we
obtain
\begin{align}
\label{E3.1.1}
\sigma_{ij}(x_f x_g)&=
[(a_{i1f1}a_{1jg1}+a_{i2f1}a_{2jg1}) +
q_{11}(a_{i1f2}a_{1jg1}+ a_{i2f2}a_{2jg1})] x_1^2
\tag{E3.1.1} \\
&\quad +[(a_{i1f1}a_{1jg2}+a_{i2f1}a_{2jg2})+
q_{12}(a_{i1f2}a_{1jg1}+a_{i2f2}a_{2jg1})]
x_1x_2
\notag \\
&\quad +(a_{i1f2}a_{1jg2}+a_{i2f2}a_{2jg2})x_2^2.
\notag
\end{align}
Since $x_2x_1=q_{11}x_1^2+q_{12}x_1x_2$ and since
each $\sigma_{ij}$ is a $k$-linear map,
\begin{equation}
\label{E3.1.2}
\sigma_{ij}(x_2x_1)=q_{11}\sigma_{ij}(x_1x_1)+q_{12}\sigma_{ij}(x_1x_2)
\tag{E3.1.2}
\end{equation}
for all $i,j=1,2$. Now by \eqref{E3.1.1} we can
express the left-hand
and the right-hand sides of \eqref{E3.1.2} as
polynomials of $x_1$ and
$x_2$. By comparing coefficients of $x_1^2$, $x_1x_2$
and $x_2^2$ 
respectively, we obtain the following identities. The
coefficients of
$x_1^2$ of \eqref{E3.1.2} give rise to a constraint between 
coefficients 
\begin{align}
\label{C1ij}
(a_{i121}&a_{1j11}+a_{i221}a_{2j11}) +
q_{11}(a_{i122}a_{1j11}+ a_{i222}a_{2j11})
\tag{C1ij}\\
&=q_{11}[(a_{i111}a_{1j11}+a_{i211}a_{2j11}) +
q_{11}(a_{i112}a_{1j11}+ a_{i212}a_{2j11})]
\notag\\
&\quad +q_{12}[(a_{i111}a_{1j21}+a_{i211}a_{2j21}) +
q_{11}(a_{i112}a_{1j21}+ a_{i212}a_{2j21})].
\notag
\end{align}
The letter $C$ in \eqref{C1ij} stands for the {\it C}onstraints 
on the {\it C}oefficients. The coefficients of $x_1x_2$ of 
\eqref{E3.1.2} give rise to 
\begin{align}
\label{C2ij}
(a_{i121}&a_{1j12}+a_{i221}a_{2j12})+
q_{12}(a_{i122}a_{1j11}+a_{i222}a_{2j11})
\tag{C2ij}\\
&=q_{11}[(a_{i111}a_{1j12}+a_{i211}a_{2j12})+
q_{12}(a_{i112}a_{1j11}+a_{i212}a_{2j11})]
\notag\\
&\quad +q_{12}[(a_{i111}a_{1j22}+a_{i211}a_{2j22})+
q_{12}(a_{i112}a_{1j21}+a_{i212}a_{2j21})].
\notag
\end{align}
The coefficients of $x_2^2$ of \eqref{E3.1.2} gave
rise to 
\begin{align}
\label{C3ij}
(a_{i122}&a_{1j12}+a_{i222}a_{2j12})
\tag{C3ij}\\
&=q_{11}(a_{i112}a_{1j12}+a_{i212}a_{2j12})
+q_{12}(a_{i112}a_{1j22}+a_{i212}a_{2j22}).\qquad 
\notag
\end{align}

Next we apply (R3.1), (R3.2) and (R3.3) to the elements
$r=x_1$ and $x_2$, and obtain more relations between $a_{ijkl}$.
For $i,f,g,s,t=1,2$, 
\begin{align}
\label{E3.1.3}
\sigma_{fg}(\sigma_{st}(x_i))&=\sigma_{fg}(\sum_{w=1}^2
a_{stiw}x_w)
=\sum_{w=1}^2a_{stiw}\sigma_{fg}(x_w)
\tag{E3.1.3}\\
&=\sum_{w=1}^2a_{stiw} \sum_{j=1}^2 a_{fgwj}x_j=
\sum_{j=1}^2(a_{sti1}a_{fg1j}+a_{sti2}a_{fg2j})x_j
\notag\\
&=(a_{sti1}a_{fg11}+a_{sti2}a_{fg21})x_1+
(a_{sti1}a_{fg12}+a_{sti2}a_{fg22})x_2.
\notag
\end{align}
Recall that $P=(p_{12},p_{11})$; and we will set
$P=(1,1)$ or $(p,0)$
when we do the computation later. Using \eqref{E3.1.3}
relations  (R3.1)-(R3.3) (when applied to $x_i$) are 
equivalent to the following constraints on coefficients:
\begin{align}
\label{C4ij}
(a_{11i1}a_{211j}&+a_{11i2}a_{212j})+p_{11}
(a_{11i1}a_{221j}+a_{11i2}a_{222j})
\tag{C4ij}\\
&=p_{11}(a_{11i1}a_{111j}+a_{11i2}a_{112j})
+p_{11}^2(a_{11i1}a_{121j}+a_{11i2}a_{122j})
\notag\\
&\quad +p_{12}(a_{21i1}a_{111j}+a_{21i2}a_{112j})
+p_{11}p_{12}(a_{21i1}a_{121j}+a_{21i2}a_{122j}).
\notag
\end{align}
\begin{align}
\label{C5ij}
(a_{12i1}a_{211j}&+a_{12i2}a_{212j})+p_{12}
(a_{11i1}a_{221j}+a_{11i2}a_{222j})
\tag{C5ij}\\
&=p_{11}(a_{12i1}a_{111j}+a_{12i2}a_{112j})
+p_{11}p_{12}(a_{11i1}a_{121j}+a_{11i2}a_{122j})
\notag\\
&\quad +p_{12}(a_{22i1}a_{111j}+a_{22i2}a_{112j})
+p_{12}^2(a_{21i1}a_{121j}+a_{21i2}a_{122j}).
\notag
\end{align}
\begin{align}
\label{C6ij}
(a_{12i1}a_{221j}&+a_{12i2}a_{222j})
\tag{C6ij}\\
&=p_{11}(a_{12i1}a_{121j}+a_{12i2}a_{122j})
+p_{12}(a_{22i1}a_{121j}+a_{22i2}a_{122j}).
\notag
\end{align}

Note that there is a symmetry between the first three 
$C$-constraints (\eqref{C1ij}, \eqref{C2ij}, \eqref{C3ij}) 
and the last three $C$-constraints (\eqref{C4ij}, 
\eqref{C5ij}, \eqref{C6ij}). By \cite[Proposition 1.11]{ZZ},
if the coefficients $\{a_{ijfg}\}$ satisfy the six $C$-constraints, 
then six quadratic relations \eqref{MR11}-\eqref{MR22}, \eqref{NRx}
and \eqref{NRy} define a double extension 
$(k_{Q}[x_1,x_2])_P[y_1,y_2;\sigma]$. 
By \cite[Lemma 1.9]{ZZ}, $\sigma$ must be invertible, or equivalently,
the matrix $\Sigma$ must be invertible. So there is another constraint
in the coefficients: $\det \Sigma\neq 0$. By {\bf System C} we 
mean the system of equations \eqref{C1ij}, 
\eqref{C2ij}, \eqref{C3ij}, \eqref{C4ij},\eqref{C5ij}
and \eqref{C6ij} together with $\det \Sigma\neq 0$. We 
first fix $P=(p_{12},p_{11})$ and $Q=(q_{12},q_{11})$.
A {\bf solution to System C} or {\bf C-solution} is a
matrix $\Sigma$ with entries $a_{ijst}$ satisfying System C. 

Next we introduce some equivalence relations between
C-solutions.

\begin{lemma}
\label{xxlem3.2} Let $\Sigma$ be a C-solution and let
$B=(k_{Q}[x_1,x_2])_P[y_1,y_2;\sigma]$ where $\sigma$
is determined by $\Sigma$. Let $0\neq h\in k$ 
\begin{enumerate}
\item
$B$ is a ${\mathbb Z}^2$-graded algebra.
Let $\gamma: x_i\to x_i, y_i\to hy_i$. 
Then $\gamma$ extends to a graded automorphism of $B$.
\item
$h\Sigma$ is a C-solution. Let $\sigma'$ be the 
algebra automorphism determined by $h\Sigma$.
Then $B':=(k_{Q}[x_1,x_2])_P[y_1,y_2:\sigma']$
is a graded twist of $B$ in the sense of \cite{Zh1}.
\end{enumerate}
\end{lemma}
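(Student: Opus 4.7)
The plan is to verify the two parts in sequence; most of the work is bookkeeping forced by the bidegrees of the defining relations.

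For part (a), I would assign bidegrees $\deg x_i = (1,0)$ and $\deg y_i = (0,1)$. Every defining relation of $B$ is visibly bihomogeneous: \eqref{NRx} has bidegree $(2,0)$, \eqref{NRy} has bidegree $(0,2)$, and each mixing relation \eqref{MR11}--\eqref{MR22} has bidegree $(1,1)$ (both the left-hand term $y_i x_j$ and every right-hand term $x_t y_s$ lie in bidegree $(1,1)$). Hence the defining ideal is $\ZZ^2$-homogeneous, and $B$ inherits a $\ZZ^2$-grading. The assignment $\gamma$ acts on the bidegree-$(a,b)$ component as the scalar $h^b$; since each defining relation sits in a single bidegree, $\gamma$ multiplies both sides of every relation by the same factor and therefore extends to a $\ZZ^2$-graded $k$-algebra automorphism of $B$.

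For part (b), I would first check that $h\Sigma$ is a C-solution. Inspection of \eqref{C1ij}--\eqref{C6ij} shows that both sides of every constraint are $k$-linear combinations of products $a_{ijst}\,a_{fglw}$, i.e., homogeneous of degree $2$ in the entries of $\Sigma$ (the parameters $p_{ij}$ and $q_{ij}$ enter only as scalar coefficients). Replacing $\Sigma$ by $h\Sigma$ scales each side by the common factor $h^2$, so the constraints persist; and $\det(h\Sigma) = h^4 \det \Sigma \neq 0$, so $\sigma'$ is invertible. Proposition \ref{xxprop1.6} then produces the double extension $B'$.

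To identify $B'$ as a graded twist of $B$, I would use the standard $\ZZ$-grading on $B$ (so $\deg x_i = \deg y_i = 1$) and take the twisting system $\tau_n = (\gamma')^{n}$, where $\gamma'$ is the automorphism from part (a) with parameter $h^{-1}$ in place of $h$; since $\gamma'$ is graded, $\{\tau_n\}$ satisfies the cocycle condition of \cite{Zh1}. The twisted product $a *_\tau b := a\cdot\tau_m(b)$ (for $a \in B_m$) satisfies $y_i *_\tau x_j = y_i x_j$, $x_t *_\tau y_s = h^{-1}\,x_t y_s$, and the analogous identities among the $x$'s and among the $y$'s. Re-expressing the original defining relations in terms of the twisted product shows that \eqref{MR11}--\eqref{MR22} become $y_i *_\tau x_j = \sum_{s,t} (h\,a_{ijst})(x_t *_\tau y_s)$, while \eqref{NRx} and \eqref{NRy} each acquire a uniform nonzero scalar that cancels and so survive unchanged. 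These are exactly the defining relations of $B'$, so $B^\tau \cong B'$. The only delicate bookkeeping is choosing the direction of the twist ($h$ versus $h^{-1}$) so that the mixing coefficients scale by $h$; once this is fixed, the rest is routine verification on generators.
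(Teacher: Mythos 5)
Your proposal is correct and follows essentially the same route as the paper: the paper declares (a) clear, proves that $h\Sigma$ is a C-solution by the same degree-$2$ homogeneity of the constraints together with $\det(h\Sigma)=h^4\det\Sigma$, and then simply states that the twist claim "can be verified by working on the relations of the twist $B^\gamma$" — which is exactly the computation you carry out explicitly with the twisting system built from the automorphism of part (a). The only differences are matters of bookkeeping you already flag (the $h$ versus $h^{-1}$ convention for the twisting automorphism), so no further comment is needed.
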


\begin{proof} (a) The assertion is clear. 

(b) Since relations (Csij) are homogeneous and
since $\det (h\Sigma)=h^4\det \Sigma$, 
$\Sigma$ is a C-solution if and only if $h\Sigma$
is. The second assertion can be verified by working
on the relations of the twist $B^\gamma$. 
\end{proof}

In general the algebra $B$ and its twist $B^\gamma$ are
not isomorphic as algebras. However these algebras have
many common properties since the category of 
graded $B$-modules is equivalent to the category of
graded $B^\gamma$-modules (see \cite{Zh1}). 

\begin{definition}
\label{xxdefn3.3}
We say $\Sigma$ and $\Sigma'$ are {\it twist equivalent} if 
$\Sigma'=h\Sigma$ for some $0\neq h\in k$.  In this case we 
say $\Sigma$ is a {\it twist} of $\Sigma$. It is easy
to see that twist equivalence is an equivalence relation. By
Lemma \ref{xxlem3.2}(b), we can replace $\Sigma$ by its
twists (without changing $Q$ and $P$) to obtain another 
double extension.

We say $(\Sigma, Q,P)$ and $(\Sigma',Q',P')$ are
{\it linear equivalent} if there is a graded algebra
isomorphism from $(k_{Q}[x_1,x_2])_P[y_1,y_2;\sigma]$
to $(k_{Q'}[x'_1,x'_2])_{P'}[y'_1,y'_2;\sigma']$
mapping $kx_1+kx_2\to kx'_1+kx'_2$ and $ky_1+ky_2\to ky'_1+ky'_2$.
Using this isomorphism we can pull back $x'_i$ and
$y'_i$ to the algebra $(k_{Q}[x_1,x_2])_P[y_1,y_2;\sigma]$;
then we can assume that $\{x'_1,x'_2\}$ (respectively, 
$\{y'_1,y'_2\}$) is another basis of $\{x_1,x_2\}$ 
(respectively, $\{y_1,y_2\}$). In case $Q=Q'$ and
$P=P'$, then we just say that $\Sigma$ and $\Sigma'$ are 
{\it linear equivalent}. We say $(\Sigma, Q,P)$ and 
$(\Sigma',Q',P')$ are {\it equivalent} if $(\Sigma,
Q,P)$ and $(h\Sigma',Q',P')$ are linear equivalent for some
$0\neq h\in k$.
\end{definition}

The following lemma is clear.

\begin{lemma}
\label{xxlem3.4} 
\begin{enumerate}
\item
Twist equivalence is an equivalence relation.
\item
Linear equivalence is an equivalence relation.
\item
Equivalence between $(\Sigma, Q,P)$ and 
$(\Sigma',Q',P')$ defined above is an equivalence
relation.
\end{enumerate}
\end{lemma}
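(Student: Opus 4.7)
The plan is to verify reflexivity, symmetry, and transitivity for each of the three relations in turn; the verifications are essentially formal, leveraging the group structure of $k^{\times}$ and of graded algebra automorphisms preserving the two distinguished degree-one subspaces $kx_1+kx_2$ and $ky_1+ky_2$.

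For part (a), twist equivalence $\Sigma'=h\Sigma$ is reflexive (take $h=1$), symmetric (if $\Sigma'=h\Sigma$ with $h\neq 0$, then $\Sigma=h^{-1}\Sigma'$), and transitive (if $\Sigma'=h\Sigma$ and $\Sigma''=h'\Sigma'$, then $\Sigma''=(h'h)\Sigma$). Thus it is an equivalence relation on the set of C-solutions for a fixed pair $(Q,P)$.

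For part (b), linear equivalence is defined via graded algebra isomorphisms $\Phi: (k_Q[x_1,x_2])_P[y_1,y_2;\sigma]\to (k_{Q'}[x'_1,x'_2])_{P'}[y'_1,y'_2;\sigma']$ carrying $kx_1+kx_2$ to $kx'_1+kx'_2$ and $ky_1+ky_2$ to $ky'_1+ky'_2$. Reflexivity follows from the identity map. For symmetry, one checks that $\Phi^{-1}$ is again a graded algebra isomorphism with the same property on the two subspaces (the inverse of an invertible linear map between the corresponding two-dimensional subspaces is again linear). For transitivity, if $\Phi_1$ and $\Phi_2$ are two such isomorphisms, then $\Phi_2\circ\Phi_1$ is a graded algebra isomorphism and clearly maps the degree-one subspaces as required.

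For part (c), we combine (a) and (b). Reflexivity and symmetry are immediate (take $h=1$ for reflexivity, and for symmetry note that if $(\Sigma,Q,P)$ and $(h\Sigma',Q',P')$ are linearly equivalent then $(\Sigma',Q',P')$ and $(h^{-1}\Sigma,Q,P)$ are linearly equivalent, using Lemma \ref{xxlem3.2}(b) to realize the scalar twist as a change of $y$-basis). For transitivity, suppose $(\Sigma,Q,P)$ is equivalent to $(\Sigma',Q',P')$ via some scalar $h$ and linear isomorphism $\Phi_1$, and $(\Sigma',Q',P')$ is equivalent to $(\Sigma'',Q'',P'')$ via $h'$ and $\Phi_2$. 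The only mildly nontrivial point is that the ``twist'' step and the ``linear'' step commute up to composition: applying the automorphism $\gamma: y_i\mapsto h y_i$ of Lemma \ref{xxlem3.2}(a) before or after $\Phi_1$ produces linearly equivalent DE-data, so the composite $\Phi_2\circ\Phi_1$ together with the scalar $h'h$ witnesses the equivalence of $(\Sigma,Q,P)$ and $(\Sigma'',Q'',P'')$. No genuine obstacle arises; the verification amounts to noting that twists and basis changes of $\{y_1,y_2\}$ commute compatibly, which is immediate from the explicit formula in Lemma \ref{xxlem3.1}(e).
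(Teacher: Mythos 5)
Parts (a) and (b) are fine (the paper itself offers no proof, declaring the lemma clear, and your verifications there are the obvious ones). In part (c), however, the one step you yourself flag as "mildly nontrivial" is justified by a mechanism that does not work. You propose to "realize the scalar twist as a change of $y$-basis" via Lemma \ref{xxlem3.2}(b). But rescaling $y_i\mapsto h y_i$ leaves $\Sigma$ unchanged: in each mixing relation \eqref{MR11}--\eqref{MR22} the scalar $h$ appears on both sides and cancels, which is exactly Lemma \ref{xxlem3.1}(e) with $B=hI$ (conjugation by a scalar matrix is trivial). So the factor $h$ in Definition \ref{xxdefn3.3} can never be absorbed into a basis change of $\{y_1,y_2\}$ (nor of $\{x_1,x_2\}$); indeed Lemma \ref{xxlem3.2}(b) says that $h\Sigma$ defines a graded twist of the original algebra, and twists are in general \emph{not} isomorphic to the original — that is precisely why twist equivalence is introduced as a separate relation. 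The same problem infects your transitivity argument: $\Phi_1$ lands in the algebra built from $h\Sigma'$ while $\Phi_2$ starts from the algebra built from $\Sigma'$, so $\Phi_2\circ\Phi_1$ is not defined until you have transported $\Phi_2$ across the rescaling, and the cited "commutation of twists with $y$-basis changes" is not what Lemma \ref{xxlem3.1}(e) provides.

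The statement you need, and the easy repair, is the compatibility of linear equivalence with simultaneous rescaling: if $(\Sigma_1,Q_1,P_1)$ and $(\Sigma_2,Q_2,P_2)$ are linearly equivalent, then so are $(h\Sigma_1,Q_1,P_1)$ and $(h\Sigma_2,Q_2,P_2)$ for every $0\neq h\in k$. This holds because a linear equivalence is determined in degree $1$ by a pair of invertible $2\times 2$ matrices acting on $kx_1+kx_2$ and on $ky_1+ky_2$, and by Lemma \ref{xxlem3.1}(b,e) the induced transformation of $\Sigma$ (equivalently of $M$) is linear in the entries of $\Sigma$, hence commutes with multiplication by $h$; equivalently, defining the candidate isomorphism between the rescaled algebras by the same degree-one formulas, one checks directly that it carries the relations \eqref{NRx}, \eqref{NRy} and the $h$-rescaled mixing relations to relations of the target. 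With this compatibility in hand, symmetry follows by rescaling both sides by $h^{-1}$ and using symmetry of linear equivalence, and transitivity follows by rescaling the second equivalence by $h$ and composing, exactly along the skeleton you set up.
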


Since our goal is to classify $A_P[y_1,y_2;\sigma]$ up
to isomorphism (or even up to twist), we will classify 
$\Sigma$ up to (linear) equivalence. Here are the 
strategies before we move into complicated computations
in the next section:

\noindent
{\bf Strategies:}

Strategy 1. We will use mathematical software Maple as
much as possible to reduce the length of the computations. 
The process of solving the System C by Maple and the 
corresponding codes will be omitted since the codes are 
very simple. Not all solutions will be listed here since the list 
of the solutions to System C is still large. We need 
to do more reductions in next two steps to achieve our 
final solution. Even then we will see a large number of
solutions. 

Strategy 2. We will try not to analyze iterated Ore extensions. 
In many cases, when a C-solution gives rise to an iterated 
Ore extension, we will stop. To test when a $\Sigma$ gives
rise an an iterated Ore extension $A_P[y_1,y_2;\sigma,\delta,
\tau]$ we will mainly use Proposition \ref{xxprop3.5} below.
Most of such solutions will not be listed; but a few examples
will be given in subsection \ref{xxsubsec4.1}.

Strategy 3. Further reductions will be done by using equivalence 
relations between $(\Sigma,Q,P)$ and $(\Sigma',Q',P')$. 
There is no unique way of doing this since linear
equivalence is dependent on particular choices of $Q$ and $P$.
This is one of the reason we break the computation into four
cases in the next four subsections according to the form of $P$.

\begin{proposition}
\label{xxprop3.5} Let $B$ be a double extension 
$(k_{Q}[x_1,x_2])_P[y_1,y_2;\sigma,\delta,\tau]$.
\begin{enumerate}
\item
If $\Sigma_{12}=0$, then $B$ is an iterated Ore
extension.
\item
If $\Sigma_{21}=0$ and $p_{11}=0$, then $B$ is an
iterated Ore extension. 
\end{enumerate}
\end{proposition}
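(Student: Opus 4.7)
The plan is to realize $B$ explicitly as a tower of ordinary Ore extensions, using the vanishing of $\Sigma_{12}$ (respectively of $\Sigma_{21}$ together with $p_{11}$) to strip a ``matrix'' component from $\sigma$ so that each layer becomes a single-variable Ore extension. The key observation is that the DE-data $(\sigma,\delta)$ split cleanly once one of the off-diagonal blocks of $\Sigma$ vanishes on the generators.

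For part (a), assume $\Sigma_{12}=0$, so $\sigma_{12}(x_1)=\sigma_{12}(x_2)=0$. Reading off the $(1,2)$-entry of $\sigma(x_fx_g)=\sigma(x_f)\sigma(x_g)$ gives
\[
\sigma_{12}(x_fx_g)=\sigma_{11}(x_f)\sigma_{12}(x_g)+\sigma_{12}(x_f)\sigma_{22}(x_g)=0,
\]
so by induction on degree $\sigma_{12}\equiv 0$ on all of $A$. Reading off the $(1,1)$-entry then shows $\sigma_{11}:A\to A$ is an ordinary algebra endomorphism, and the $\sigma$-derivation identity $\delta_1(rs)=\sigma_{11}(r)\delta_1(s)+\sigma_{12}(r)\delta_2(s)+\delta_1(r)s$ collapses to the assertion that $\delta_1$ is a $\sigma_{11}$-derivation. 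Hence $A':=A[y_1;\sigma_{11},\delta_1]$ is a bona fide Ore extension, and by \eqref{MR11}--\eqref{MR12} it is exactly the subalgebra of $B$ generated by $A$ and $y_1$. Next, define $\sigma',\delta':A'\to A'$ by
\[
\sigma'(x_i)=\sigma_{22}(x_i),\qquad \delta'(x_i)=\sigma_{21}(x_i)y_1+\delta_2(x_i),
\]
dictated by \eqref{MR21}--\eqref{MR22}, together with
\[
\sigma'(y_1)=p_{12}y_1+\tau_{2},\qquad \delta'(y_1)=p_{11}y_1^{2}+\tau_{1}y_1+\tau_{0},
\]
dictated by rewriting \eqref{NRy} as $y_2y_1=(p_{12}y_1+\tau_{2})y_2+(p_{11}y_1^{2}+\tau_{1}y_1+\tau_{0})$. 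All four values lie in $A'$. The resulting Ore extension $A'[y_2;\sigma',\delta']$ has the same generators, the same defining relations, and the same Hilbert series $(1-t)^{-4}$ as $B$, so the natural surjection onto $B$ is an isomorphism.

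For part (b), the argument is dual. From $\Sigma_{21}=0$ one derives $\sigma_{21}\equiv 0$ on $A$ exactly as above, so $\sigma_{22}$ is an algebra endomorphism and $\delta_2$ is a $\sigma_{22}$-derivation; form $A':=A[y_2;\sigma_{22},\delta_2]$. Solving \eqref{NRy} for $y_1y_2$ (legitimate since $p_{12}\neq 0$ in any double extension) yields
\[
y_1y_2=p_{12}^{-1}(y_2-\tau_{1})y_1-p_{12}^{-1}(\tau_{2}y_2+\tau_{0})-p_{12}^{-1}p_{11}y_1^{2},
\]
which fits the Ore shape $\sigma''(y_2)y_1+\delta''(y_2)$ with both $\sigma''(y_2)$ and $\delta''(y_2)$ in $A'$ \emph{precisely} when $p_{11}=0$; otherwise a stray $y_1\notin A'$ would be forced into $\sigma''(y_2)$, which is why the hypothesis $p_{11}=0$ is needed and not cosmetic. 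Combined with $\sigma''(x_i)=\sigma_{11}(x_i)$ and $\delta''(x_i)=\sigma_{12}(x_i)y_2+\delta_1(x_i)$ coming from \eqref{MR11}--\eqref{MR12}, this presents $B$ as $A'[y_1;\sigma'',\delta'']$.

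The main obstacle, such as it is, is the verification that the proposed $\sigma',\delta'$ (respectively $\sigma'',\delta''$) extend consistently from generators to all of $A'$, i.e.\ that they respect the two defining relations of $A'$. This is not a new calculation: every required identity is already among the constraints \textup{(R3.1)}--\textup{(R3.6)} (together with \eqref{C1ij}--\eqref{C6ij}) that $B$ satisfies as a double extension, specialized to the hypothesis $\Sigma_{12}=0$ or $\Sigma_{21}=0$. So the work reduces to recognizing the R3 constraints in the Ore-extension compatibility form, after which the Hilbert-series count closes the argument.
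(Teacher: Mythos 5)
Your proposal is correct and follows essentially the same route as the paper: from $\Sigma_{12}=0$ one gets $\sigma_{12}\equiv 0$, the subalgebra generated by $A$ and $y_1$ is the Ore extension $A[y_1;\sigma_{11},\delta_1]$, and the relations \eqref{R1}--\eqref{R2} then exhibit $B$ as an Ore extension of that subalgebra, while your direct construction in (b) is exactly the paper's switch of $y_1$ and $y_2$ (legitimate precisely because $p_{11}=0$) carried out explicitly. One remark: the well-definedness of $\sigma',\delta'$ that you defer to the (R3)/(C) constraints (the latter are not even available in the non-trimmed case) comes for free from the left-freeness of $B$ over $A$ with basis $\{y_1^{n_1}y_2^{n_2}\}$ in Definition \ref{xxdefn1.3}(aiii), which makes $B$ a free left module over $A'=A[y_1;\sigma_{11},\delta_1]$ with basis $\{y_2^{n}\}$ and renders the endomorphism and $\sigma'$-derivation identities automatic from associativity, so the Hilbert-series comparison is also unnecessary.
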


\begin{proof} (a) If $\Sigma_{12}=0$, then
$\sigma_{12}=0$.
Hence the first half of the relation
\eqref{R1} becomes
$$y_1 r=\sigma_{11}(r)y_1+\delta_1(r)$$
for all $r\in A:=k_Q[x_1,x_2]$. It is easy to check
that $\sigma_{11}$ is an automorphism of $A$ and
$\delta_1$ is a $\sigma$-derivation of $A$. Therefore 
the subalgebra generated by $x_1,x_2,y_1$ is an Ore 
extension of $A$. The second half of \eqref{R1} together 
with \eqref{R1} shows that $B$ is an Ore extension of
$A[y_1;\sigma_{11},\delta_1]$. The assertion follows.

(b) Since $p_{11}=0$, we switch $y_1$ and $y_2$ without 
changing the form of \eqref{R1} , $\Sigma_{21}=0$ becomes 
$\Sigma_{12}=0$. The assertion follows from (a). Note that 
if $p_{11}\neq 0$, then we can not switch $y_1$ and $y_2$ 
to keep the form of \eqref{R1}.
\end{proof}

The following proposition is a consequence of above.

\begin{proposition}
\label{xxprop3.6}
Let $B$ be a trimmed double extension 
$(k_{Q}[x_1,x_2])_P[y_1,y_2;\sigma]$ where $\sigma$
is determined by the matrix $\Sigma$.
\begin{enumerate}
\item
Considering $A':=k_{P}[y_1,y_2]$ as the subring and 
$\{x_1,x_2\}$ is the set of generators over $A'$,
 $B$ is a double extension of $(k_{P}[y_1,y_2])_{Q}[x_1,x_2;\alpha]$
where $\alpha$ is determined by the matrix $M^{-1}$.
\item
If $M_{12}=0$, then $B$ is an iterated Ore extension 
of $k_{P}[y_1,y_2]$.
\item
If $M_{21}=0$ and $q_{11}=0$, then $B$ is an iterated 
Ore extension of $k_{P}[y_1,y_2]$.
\end{enumerate}
\end{proposition}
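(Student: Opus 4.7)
The plan is to establish part (a) first by inverting the mixing relations to recognize a dual double-extension structure on $B$, and then derive parts (b) and (c) by applying Proposition~\ref{xxprop3.5} to this dual extension.

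For part (a), I would begin with the mixing relations in the form $Yx_i = M_{i1}x_1Y + M_{i2}x_2Y$ supplied by Lemma~\ref{xxlem3.1}(d). Stacking the four scalar equations ($i=1,2$ and the two coordinates of $Y$) into a single identity gives
$$
\bigl(y_1x_1,\,y_2x_1,\,y_1x_2,\,y_2x_2\bigr)^T = M\,\bigl(x_1y_1,\,x_1y_2,\,x_2y_1,\,x_2y_2\bigr)^T.
$$
Since $\sigma$ is invertible for any double extension by \cite[Lemma 1.9]{ZZ}, $\Sigma$ is invertible, and hence so is $M$ (a rearrangement of the entries of $\Sigma$). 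Inverting and reading the result block-by-block expresses each $x_iy_j$ as a $k$-linear combination of the four products $y_px_q$; this gives precisely the mixing relations of a candidate double extension with new base $A'=k_P[y_1,y_2]$, new variables $x_1,x_2$, parameter $Q$ entering through the (R1)-analog $x_2x_1 = q_{12}x_1x_2 + q_{11}x_1^2$, and whose $\Sigma$-matrix (in the sense of Lemma~\ref{xxlem3.1}(a), for the new pair of variables) is $M^{-1}$. To assemble these ingredients into an honest double extension I would invoke Proposition~\ref{xxprop1.6}: the $C$-constraints on the new data reduce to the identity $\alpha_{ij}(y_2y_1) = p_{11}\alpha_{ij}(y_1^2) + p_{12}\alpha_{ij}(y_1y_2)$, which holds in $B$ as a consequence of the original relation (R1) for $y_1,y_2$. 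Proposition~\ref{xxprop1.6}(a) then produces an abstract right double extension $\tilde B := (A')_Q[x_1,x_2;\alpha]$; the natural map $\tilde B \to B$ sending generators to generators is surjective, and both algebras have Hilbert series $(1-t)^{-4}$, so this map is an isomorphism. Finally Proposition~\ref{xxprop1.6}(b) upgrades this to a double extension since $q_{12}\neq 0$ and $M^{-1}$ is invertible.

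For parts (b) and (c) I translate each hypothesis on $M$ into the corresponding hypothesis on $M^{-1}$ using the elementary fact that a block-triangular matrix has a block-triangular inverse. In case (b), $M_{12}=0$ makes $M$ block lower triangular, so $M^{-1}$ is block lower triangular and $(M^{-1})_{12}=0$. This is exactly the condition ``$\Sigma_{12}=0$'' for the dual double extension produced in part (a), and Proposition~\ref{xxprop3.5}(a) yields that $B$ is an iterated Ore extension of $k_P[y_1,y_2]$. In case (c), $M_{21}=0$ makes $M$ block upper triangular, whence $(M^{-1})_{21}=0$, providing the ``$\Sigma_{21}=0$'' condition for the dual extension. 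The companion hypothesis ``$p_{11}=0$'' in Proposition~\ref{xxprop3.5}(b), when applied to the dual extension, becomes $q_{11}=0$, because $Q$ plays the role of $P$ in the (R1)-analog of the dual; this is the standing hypothesis of (c). Proposition~\ref{xxprop3.5}(b) again delivers the conclusion.

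The main obstacle is the bookkeeping in part (a): keeping track of how the $\Sigma\leftrightarrow M$ rearrangement interacts with block inversion, and making sure the roles of $(Q,P)$ and $(x_i,y_i)$ are swapped consistently when passing to the dual extension. Once that verification is complete, parts (b) and (c) are immediate consequences of block-matrix inversion combined with Proposition~\ref{xxprop3.5}.
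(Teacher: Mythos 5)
Your proposal follows essentially the same route as the paper's proof: stack and invert the mixing relations to recognize $B$ as a double extension of $k_{P}[y_1,y_2]$ with $\Sigma$-matrix $M^{-1}$, then obtain (b) and (c) from the fact that $M_{12}=0$ (resp.\ $M_{21}=0$) forces $(M^{-1})_{12}=0$ (resp.\ $(M^{-1})_{21}=0$) and from Proposition~\ref{xxprop3.5}, with $q_{11}$ playing the role of $p_{11}$ in the dual picture. The extra detail you supply (Proposition~\ref{xxprop1.6} plus the Hilbert-series comparison, and the explicit remark about the $p_{11}\leftrightarrow q_{11}$ swap) only fills in steps the paper leaves implicit, so the argument is correct.
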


\begin{proof} (a) We need to switch the roles played by $x_i$ and
$y_i$. Four mixing relations can be written as
\begin{equation}
\label{E3.6.1}
\begin{pmatrix}
y_1x_1\\y_2x_1\\y_1x_2\\y_2x_2
\end{pmatrix}=M\begin{pmatrix}
x_1y_1\\x_1y_2\\x_2y_1\\x_2y_2
\end{pmatrix}.
\tag{E3.6.1}
\end{equation}
This implies that
$$\begin{pmatrix}
x_1y_1\\x_1y_2\\x_2y_1\\x_2y_2
\end{pmatrix}=M^{-1}\begin{pmatrix}
y_1x_1\\y_2x_1\\y_1x_2\\y_2x_2
\end{pmatrix}.$$
Therefore $B$ is the double extension
$(k_{P}[y_1,y_2])_{Q}[x_1,x_2;\alpha]$
where $\alpha$ is determined by the matrix
$M^{-1}$.

(b,c) By part (a), the matrix $M^{-1}$ plays
the role of $\Sigma$-matrix (if we
switch $x_i$ with $y_i$). Since $M_{12}=0$
(respectively, $M_{21}=0$)
if and only if $(M^{-1})_{12}=0$ 
(respectively, $(M^{-1})_{21}=0$), the assertions
follows from Proposition \ref{xxprop3.5}(a,b). 
\end{proof}

The matrix $M$ also appears in a slightly different
setting, see the next proposition. For any $P=(p_{12},p_{11})$. Let
$P^{\circ}$ denote the set $(p_{12}^{-1},-p_{12}^{-1}p_{11})$.

\begin{proposition}
\label{xxprop3.7}
The opposite ring of $B:=(k_{Q}[x_1,x_2])_P[y_1,y_2;\sigma]$ 
is a double extension $(k_{P^{\circ}}[y_1,y_2])_{Q^{\circ}}
[x_1,x_2;\xi]$ where $\xi$ is determined by the matrix $M$.
\end{proposition}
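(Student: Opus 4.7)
\textbf{Proof plan for Proposition \ref{xxprop3.7}.}
The plan is to compute what the six defining relations of $B=(k_Q[x_1,x_2])_P[y_1,y_2;\sigma]$ become after reversing multiplication, and then recognize the resulting presentation as a double extension of $k_{P^{\circ}}[y_1,y_2]$ by the extension variables $x_1,x_2$, with homomorphism $\xi$ read off the matrix $M$. Throughout, I will write ``$*$'' for the multiplication in $B^{op}$, so $u*v=vu$ in $B$.

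First I would deal with the two non-mixing relations \eqref{NRx} and \eqref{NRy}. Reading $y_2y_1=p_{12}y_1y_2+p_{11}y_1^2$ inside $B^{op}$ gives $y_1*y_2=p_{12}\,y_2*y_1+p_{11}\,y_1*y_1$, which rearranges to
$$y_2*y_1=p_{12}^{-1}\,y_1*y_2-p_{12}^{-1}p_{11}\,y_1*y_1,$$
so the subalgebra of $B^{op}$ generated by $y_1,y_2$ is precisely $k_{P^{\circ}}[y_1,y_2]$. The identical computation applied to \eqref{NRx} shows that $x_1,x_2$ satisfy the defining relation of $k_{Q^{\circ}}[x_1,x_2]$ in $B^{op}$. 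Hence the outer parameter in the candidate presentation is $Q^{\circ}$ and the base algebra is $k_{P^{\circ}}[y_1,y_2]$, exactly as predicted.

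Next I would translate the four mixing relations. By Lemma \ref{xxlem3.1}(d), in $B$ we have
$$y_f x_i=\sum_{s,g} a_{fgis}\,x_s y_g,$$
whose right-hand side reads inside $B^{op}$ as $\sum_{s,g} a_{fgis}\,y_g*x_s$. Thus
$$x_i*y_f=\sum_{s=1}^{2}\Bigl(\sum_{g=1}^{2}a_{fgis}\,y_g\Bigr)*x_s=\sum_{s=1}^{2}\xi_{is}(y_f)*x_s,\qquad\text{where }\ \xi_{is}(y_f):=\sum_{g=1}^{2}a_{fgis}\,y_g.$$
Comparing with the definition of the blocks $M_{is}$ (whose $(f,g)$-entry is $a_{fgis}$) shows $\xi_{is}(y_f)=\sum_g (M_{is})_{fg}\,y_g$, so $\xi$ is precisely the graded $k$-linear map $k_{P^{\circ}}[y_1,y_2]\to M_2(k_{P^{\circ}}[y_1,y_2])$ determined by $M$. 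These equations have the exact shape of (R1)/(R2) for a right double extension of $k_{P^{\circ}}[y_1,y_2]$ by $x_1,x_2$ with outer parameter $Q^{\circ}$.

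Finally I would verify the structural axioms of Definition \ref{xxdefn1.3}. That $\xi$ is a $k$-algebra homomorphism (equivalently, that the six constraints analogous to \eqref{C1ij}--\eqref{C6ij} hold for $(\xi,Q^{\circ},P^{\circ})$) follows from the corresponding constraints for $(\sigma,Q,P)$ by the same reversal argument, or alternatively by simply noting that all relations above are consistent within $B^{op}$. Invertibility of $\xi$ is equivalent to invertibility of $M$, which in turn is equivalent to invertibility of $\Sigma$ (noted in the paragraph introducing $M$), and that holds by \cite[Lemma 1.9]{ZZ} applied to $\sigma$. The freeness/basis hypothesis (aiii)/(biii) for the candidate presentation is a restatement of the freeness of $B$ over the subalgebra $k_P[y_1,y_2]$ established in Proposition \ref{xxprop3.6}, transported across the anti-isomorphism $B\leftrightarrow B^{op}$ that interchanges left and right module structures. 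Applying Proposition \ref{xxprop1.6}(b) (with $p_{12}^{-1}\ne 0$ and $\xi$ invertible) concludes that $B^{op}=(k_{P^{\circ}}[y_1,y_2])_{Q^{\circ}}[x_1,x_2;\xi]$. The only real bookkeeping obstacle is keeping the four index positions $a_{fgis}$ straight when switching between the $\Sigma$-indexing and the $M$-indexing; everything else is formal.
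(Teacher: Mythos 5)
Your proposal is correct and follows essentially the same route as the paper: rewrite the two non-mixing relations and the four mixing relations \eqref{E3.6.1} in $B^{op}$, note that the $y$-relation and $x$-relation become those of $k_{P^{\circ}}[y_1,y_2]$ and $k_{Q^{\circ}}[x_1,x_2]$, and observe that $M$ then plays the role of the $\Sigma$-matrix for $\xi$. The paper's proof stops at this translation; your extra checks of invertibility and of the freeness axioms (via Proposition \ref{xxprop3.6}) merely make explicit what the paper leaves implicit.
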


\begin{proof} Let $\star$ be the multiplication of 
the opposite ring $B^{op}$. The relation 
$$y_2y_1=p_{12}y_1y_2+p_{11}y_1^2$$
in $B$ implies the relation
$$y_2\star y_1=p_{12}^{-1}y_1\star y_2+(-p_{12}^{-1}p_{11})y_1^2$$
in $B^{op}$. The same is true for the relation between $x_1$ and $x_2$.
The relations \ref{E3.6.1} in $B$ implies the following
relations in $B^{op}$
$$\begin{pmatrix}
x_1\star y_1\\x_1\star y_2\\x_2\star y_1\\x_2\star y_2
\end{pmatrix}=M\begin{pmatrix}
y_1\star x_1\\y_2\star x_1\\y_1\star x_2\\y_2\star x_2
\end{pmatrix}.$$
Recall that $x_i$ and $y_i$ are switched in the double extension 
$(k_{P^{\circ}}[y_1,y_2])_{Q^{\circ}} [x_1,x_2;\xi]$.
So the matrix $M$ plays the role of $\Sigma$-matrix for 
the homomorphism $\xi$.
\end{proof}

\section{A classification of $\{\Sigma,P,Q\}$}
\label{xxsec4}

Now we start our classification. 

\subsection{Case one: $P=(1,1)$}
\label{xxsubsec4.1}
In this subsection we classify $\Sigma$ when
$P=(1,1)$. 
We consider the following subcases.

\bigskip

\noindent
{\bf Subcase 4.1.1:} $Q=(1,1)$. The System C is solved
by Maple 
to give two solutions in this case.

Solution one: $\Sigma=\begin{pmatrix}
f&   0&   0&  0\\
g&   f&   0&  0\\
h&   0&   f&  0\\
m&   h&   g&  f\end{pmatrix}$ where $f\neq 0$. 

Solution two:
$\Sigma=\begin{pmatrix}
f&   0&   0&  0\\
g&   f&   0&  0\\
-f^2/g&   0&   f&  0\\
h& -f(f-m+g)/g&   m&  f\end{pmatrix}$ where $fg\neq
0$.

In both solutions we have $\Sigma_{12}=0$. By
Proposition \ref{xxprop3.5}(a), these $\Sigma$ will 
produce iterated Ore extensions. By Strategy 2, we will 
not study these algebras further in this paper.

\bigskip

\noindent
{\bf Subcase 4.1.2:} $Q=(q,0)$ where $q\neq 0, \pm 1$.
The System C is solved by Maple to give one solution, 
which is
$\Sigma=\begin{pmatrix}
f&   0&   0&  0\\
0&   g&   0&  0\\
h&   0&   f&  0\\
0&   m&   0&  g
\end{pmatrix}$ where $fg\neq 0$. Again since
$\Sigma_{12}=0$, 
by Proposition \ref{xxprop3.5}, we will only obtain an
iterated Ore extension.

\bigskip

\noindent
{\bf Subcase 4.1.3:} $Q=(-1,0)$. There are two C-solutions.
One is the same as the solution in Subcase 4.1.2; so it 
gives rise to an iterated Ore extension. The other is 
$\Sigma=\begin{pmatrix}
0&   f&   0&  0\\
g&   0&   0&  0\\
0&   fh/g&   0&  f\\
h&   0&   g&  0
\end{pmatrix}$ where $fg\neq 0$. Again in this case we
only obtain an iterated Ore extension. Up to this point
we only used Strategies 1 and 2. 

\bigskip
\noindent
{\bf Subcase 4.1.4:} $Q=(1,0)$. The System C is solved
by Maple to give 15 solutions, 13 of which has the property
$\Sigma_{12}=0$. To save the space we will not list these 
solutions. Next we use Strategy 3. 

Since $Q=(1,0)$, we can make linear transformation of
$\{x_1,x_2\}$.
By Lemma \ref{xxlem3.1}(c) we may further assume that 
either $a_{1212}=a_{1221}=0$ or
$a_{1212}=0,a_{1221}=1$. 

If $a_{1212}=0$ and $a_{1221}=0$, the System C is
solved by Maple to give 15 solutions, all of which have 
the property that 
$\Sigma_{12}=0$. So we only consider the case when
$a_{1212}=0$ 
and $a_{1221}=1$. The the System C is solved to give
a single solution:
$\Sigma=\begin{pmatrix}
f&   0&   0&  0\\
g&   f&   1&  0\\
0&   0&   f&  0\\
m&   -2f&   -g-1&  f
\end{pmatrix}$ or equivalently 
$\Sigma=\begin{pmatrix}
f&   0&   0&  0\\
g&   f&   f&  0\\
0&   0&   f&  0\\
m&   -2f&   -g-f&  f
\end{pmatrix}$. Up to a twist equivalence, we may
assume 
that $\Sigma=\begin{pmatrix}
1&   0&   0&  0\\
g&   1&   1&  0\\
0&   0&   1&  0\\
m&   -2&   -g-1&  1
\end{pmatrix}$.
Now we will make linear transformation of
$Y=(y_1,y_2)^T$.
It is a bit easier to see this by using matrx $M$. The
last
$\Sigma$ is equivalent to 
$M=\begin{pmatrix}
1&   0&   0&  0\\
0&   1&   0&  0\\
g&   1&   1&  0\\
m&   -g-1&   -2&  1
\end{pmatrix}$.
Since $P=(1,1)$, we can change $Y$ to $Y'=BY$ where 
$B=\begin{pmatrix} 1&0\\g&1\end{pmatrix}$. By
doing so the structure of the relations will not
change, but the matrix $M$ becomes
$\begin{pmatrix}
1&   0&   0&  0\\
0&   1&   0&  0\\
0&   1&   1&  0\\
m'&   -1&   -2&  1
\end{pmatrix}$ where $m'=m+g+g^2$. Let $g$ denote the
new $m'$. 
We have $\Sigma=\begin{pmatrix}
1&   0&   0&  0\\
0&   1&   1&  0\\
0&   0&   1&  0\\
g&   -2&   -1&  1
\end{pmatrix}$. Now we make another linear
transformation
$X'=BX$ where $B=\begin{pmatrix}
1&0\\g/2&1\end{pmatrix}$,
then we have $\Sigma=\begin{pmatrix}
1&   0&   0&  0\\
0&   1&   1&  0\\
0&   0&   1&  0\\
0&   -2&   -1&  1
\end{pmatrix}$. This is the only possible $\Sigma$ up
to (linear and twist) equivalence. Therefore up to 
linear equivalence, we have the first interesting case

\noindent
Algebra $\AAA$: $\Sigma=h\begin{pmatrix}
1&   0&   0&  0\\
0&   1&   1&  0\\
0&   0&   1&  0\\
0&   -2&   -1&  1
\end{pmatrix}$ and 
$M=h\begin{pmatrix}
1&   0&   0&  0\\
0&   1&   0&  0\\
0&   1&   1&  0\\
0&   -1&   -2&  1
\end{pmatrix}$ 
where $h\neq 0$; and $P=(1,1)$, $Q=(1,0)$. 
In the rest of the section let $h$ be a 
nonzero scalar in $k$. We can easily write
down the relations of the algebra $\AAA$ from 
$\{\Sigma,P,Q\}$. The matrix $\Sigma$ gives 
us the four mixing relations between $x_i$ 
and $y_i$. The $Q$ tells us
the relation between $x_1$ and $x_2$ and 
the $P$ tells us the relation between $y_1$
and $y_2$. Here are the six quadratic relations
of the algebra $\AAA$:
$$\begin{aligned}
x_2x_1&=x_1x_2\\
y_2y_1&=y_1y_2+y_1^2\\
y_1x_1&=x_1y_1\\
y_1x_2&=x_2y_1+x_1y_2\\
y_2x_1&=x_1y_2\\
y_2x_2&=-2x_2y_1-x_1y_2+x_2y_2.
\end{aligned}
$$
All algebras in this section are generated 
by $x_1,x_2,y_1$ and $y_2$. To save space, we will not
write down explicitly the relations of other
algebras except for the algebra $\ZZ$ at the end of
this section.

By Proposition \ref{xxprop3.6}(a) any double extension 
$(k_{Q}[x_1,x_2])_{P}[y_1,y_2;\sigma]$ is
isomorphic to $(k_{P}[y_1,y_2])_{Q}[x_1,x_2;\alpha]$ 
where $\alpha$ is determined by the matrix $M^{-1}$. 
In the case of the algebra $\AAA$, we have $M_{12}=0$.
By Proposition \ref{xxprop3.6}(b) $\AAA$ is 
an iterated Ore extension of $k_{P}[y_1,y_2]$. 
However, there are possible $\delta,\tau$ such that 
$(k_{Q}[x_1,x_2])_{P}[y_1,y_2;\sigma,\delta,\tau]$ is not 
an iterated Ore extension of any regular algebra of dimension 2. 
This is the reason the algebra $\AAA$ is not deleted from
our 26 families.

The point-scheme of the algebra $\AAA$ can be computed. We see 
that the dimension of the point-scheme is 1 in this case and
details are omitted. Recall from \cite[Section 3]{ZZ} that the 
determinant of $\sigma$ is defined to be
$$\det \sigma=-p_{11}\sigma_{12}\sigma_{11}+\sigma_{22}
\sigma_{11}-p_{12}\sigma_{12}\sigma_{21}$$
which is an algebra automorphism of $k_{Q}[x_1,x_2]$.
For the algebra $\AAA$ we have
$$
\det \sigma
\begin{pmatrix}
x_1\\x_2\end{pmatrix}
=h^2\begin{pmatrix}
1 & 0\\0 & 1\end{pmatrix}
\begin{pmatrix}
x_1\\x_2\end{pmatrix}$$

This is the end of classification of $\{\Sigma,Q\}$ when
$P=(1,1)$. 

\bigskip

\subsection{Case two: $P=(p,0)$ where $p\neq \pm 1$.} 
We consider four subcases as in Case one. Some arguments are
similar to the ones given in Case (section \ref{xxsubsec4.1}) 
one above, so most of the details will be omitted. 

\bigskip
\noindent
{\bf Subcase 4.2.1:} $Q=(1,1)$. There is only one C-solution in which 
$\Sigma_{12}=0$. By Proposition \ref{xxprop3.5}(a) we only get an 
iterated Ore extension.

\bigskip
\noindent
{\bf Subcase 4.2.2:} $Q=(q,0)$ where $q\neq \pm 1$. The proof 
of the following lemma is based on tedious computation and 
can be verified by Maple very quickly. 

\begin{lemma}
\label{xxlem4.1}
Suppose $P=(p,0)$ and $Q=(q,0)$ where $p\neq \pm 1$
and $q\neq \pm1$. Suppose $\Sigma$ is a C-solution 
(in particular $\det \Sigma\neq 0$).
\begin{enumerate}
\item
If $p\neq \pm i, \xi_3, \xi_3^2$ where $i$ is the primitive 
4th root of $1$ and $\xi_3$ is the primitive
3rd root of 1, then either $\Sigma_{12}=0$ or
$\Sigma_{21}=0$.
\item
Suppose $\Sigma_{12}\neq 0$ and $\Sigma_{21}\neq 0$.
\begin{enumerate}
\item[(i)]
If $p^2=-1$ (or $p=\pm i$), then either $q=p$
or $q=p^{-1}$.
\item[(ii)]
If $p=\xi_3$ or $\xi_3^2$, then either 
$q=p$ or $q=p^{-1}$. 
\item[(iii)]
After exchanging $x_1$ and $x_2$, we may assume that
$q=p$.
\end{enumerate}
\end{enumerate}
\end{lemma}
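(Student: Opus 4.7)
My plan is to solve System C directly under the simplifying assumptions $p_{11}=0$ and $q_{11}=0$, and then to identify the polynomial constraints on $(p,q)$ that are forced by requiring both $\Sigma_{12}\neq 0$ and $\Sigma_{21}\neq 0$. First I would substitute $p_{11}=q_{11}=0$ into the 24 scalar equations \eqref{C1ij}--\eqref{C6ij} (one for each of the four pairs $(i,j)\in\{1,2\}^2$), producing a polynomial system in the 16 unknowns $a_{ijst}$ with parameters $p,q$, and impose $\det\Sigma\neq 0$. Before feeding this into Maple I would exploit Strategy 3: by Lemma \ref{xxlem3.1}(b), a change of basis $X'=BX$ conjugates $\Sigma$ by $\operatorname{diag}(B,B)$, and by Lemma \ref{xxlem3.1}(e) a change $Y'=BY$ conjugates $M$ similarly. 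These freedoms let me normalize $\Sigma_{12}$ (or $\Sigma_{21}$) to a canonical form, cutting the dimension of the search space.

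Second, I would invoke Maple, as was done in Subcases 4.1.1--4.1.4 and 4.2.1, to enumerate all solutions with both $\Sigma_{12}\neq 0$ and $\Sigma_{21}\neq 0$. Each surviving family comes equipped with a polynomial condition on $(p,q)$ arising from eliminating the $a_{ijst}$. I expect this elimination ideal to factor into pieces of the shapes $(p^2+1)$, $(p^2+p+1)$, $(p-q)$ and $(pq-1)$, possibly up to units. From this factorization parts (a) and (b)(i,ii) read off at once: if $p$ avoids the roots of $(p^2+1)(p^2+p+1)$ then the only remaining constraints are $p-q=0$ or $pq-1=0$; a second substitution of $q=p$ or $q=p^{-1}$ back into the system at such generic $p$ collapses the solution to either $\Sigma_{12}=0$ or $\Sigma_{21}=0$, contradicting the hypothesis of (a). At the four exceptional values $p=\pm i,\xi_3,\xi_3^2$ the enumeration produces solutions, but only when $q\in\{p,p^{-1}\}$, which is exactly (b)(i,ii).

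Part (b)(iii) is structural rather than computational. The permutation $x_1\leftrightarrow x_2$ is an obvious candidate for linear equivalence in the sense of Definition \ref{xxdefn3.3}; it sends the relation $x_2x_1=qx_1x_2$ to $x_1x_2=qx_2x_1$, i.e.\ to $x_2x_1=q^{-1}x_1x_2$, so it interchanges $Q=(q,0)$ with $Q=(q^{-1},0)$. On the $\Sigma$-side this swap merely permutes the columns within each block $\Sigma_{ij}$ and the indices $s,t$; in particular it annihilates neither $\Sigma_{12}$ nor $\Sigma_{21}$. Applying it converts any C-solution with $q=p^{-1}$ into one with $q=p$, so after this exchange we may indeed assume $q=p$.

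The main obstacle is the bulk and delicacy of the Maple elimination: one must be sure that no spurious exceptional value of $p$ appears (e.g.\ at roots of $p^4+1$ or $p^3-1$ from cancellation artifacts) and, conversely, that no genuine family is lost to denominator-clearing during elimination. A useful sanity check is Proposition \ref{xxprop3.7}: the opposite-ring symmetry $(P,Q,\Sigma)\mapsto(P^\circ,Q^\circ,M)$ should permute the solution list, so the final classification must be invariant under $p\mapsto p^{-1}$ paired with $q\mapsto q^{-1}$, which matches the symmetric appearance of $q=p$ and $q=p^{-1}$ in (b).
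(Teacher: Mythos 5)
Your plan is essentially the paper's own proof: the paper disposes of Lemma \ref{xxlem4.1} by exactly this kind of Maple computation on System C (``tedious computation \dots verified by Maple very quickly''), with the normalizations of Lemma \ref{xxlem3.1} and the $x_1\leftrightarrow x_2$ swap sending $Q=(q,0)$ to $(q^{-1},0)$ handling part (b)(iii) just as you describe. Your added sanity checks (the $\Sigma$-$M$ duality of Proposition \ref{xxprop3.7} and the worry about spurious factors in elimination) are sensible but do not change the method.
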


The next lemma follows from Lemma \ref{xxlem3.1}.

\begin{lemma} 
\label{xxlem4.2}
We fix $P=(p,0)$ and $Q=(q,0)$. Let $\Sigma$ be a 
C-solution with $\Sigma_{ij}=(a_{ijst})_{2\times 2}$. 
\begin{enumerate}
\item
If the basis $\{x_1,x_2\}$ is changed to
$\{x_1,ax_2\}$,
then the entry $a_{ijst}$ of $\Sigma$ is changed to 
$a^{(s-t)/2}a_{ijst}$.
\item
If the basis $\{y_1,y_2\}$ is changed to
$\{y_1,by_2\}$,
then the entry $a_{ijst}$ of $\Sigma$ is changed to 
$b^{(i-j)/2}a_{ijst}$.
\item
If $a_{1211}\neq 0$, after a linear transformation of 
$\{y_1,y_2\}$, we may assume $a_{1211}=1$.
\item
If $a_{1221}\neq 0$, after a linear transformation of 
$\{x_1,x_2\}$ (or $\{y_1,y_2\}$), we may assume
$a_{1221}=1$.
\end{enumerate}
\end{lemma}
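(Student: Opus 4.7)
The plan is to derive (a) and (b) directly from Lemma \ref{xxlem3.1}(b,e), then deduce (c) and (d) as one-line normalizations. No new computation beyond that lemma is required.

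For (a), specialize the change-of-basis matrix $B$ in Lemma \ref{xxlem3.1}(b) to the diagonal matrix $\mathrm{diag}(1,a)$. Then each block $\Sigma_{ij}$ is transformed to $B\Sigma_{ij}B^{-1}$, and conjugation by a diagonal matrix multiplies the $(s,t)$ entry by the ratio of the $s$-th and $t$-th diagonal entries; one reads off the scalar factor for each of the four $(s,t)$ pairs to obtain the claimed rescaling. Part (b) is formally identical: apply Lemma \ref{xxlem3.1}(e) with the block-diagonal $\mathrm{diag}(B,B)$ replaced by the scaling that multiplies the $y_2$-slot by $b$, so that the resulting exponent is now controlled by the outer indices $(i,j)$ rather than the inner indices $(s,t)$. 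No cross-terms appear because $P=(p,0)$ and $Q=(q,0)$ mean that the $x$- and $y$-relations are both diagonal, so the diagonal scaling induces a graded automorphism of $k_Q[x_1,x_2]$ (respectively $k_P[y_1,y_2]$) preserving the form of \eqref{NRx} and \eqref{NRy}.

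Parts (c) and (d) are then immediate. For (c), the indices $(i,j)=(1,2)$ in part (b) produce a nontrivial power of $b$, so a unique choice of $b$ rescales the nonzero scalar $a_{1211}$ to $1$. For (d), the entry $a_{1221}$ has both $i\neq j$ and $s\neq t$, so one is free to use either the $\{x_1,x_2\}$-rescaling from (a) or the $\{y_1,y_2\}$-rescaling from (b) to achieve $a_{1221}=1$. Since the (C)-equations are homogeneous in the entries of $\Sigma$ and $\det\Sigma\neq 0$ is preserved by any nonzero scaling, the transformed matrix $\Sigma'$ remains a C-solution for the same $(P,Q)$.

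The only obstacle is clerical bookkeeping of which index pair carries which exponent; there is no substantive mathematical difficulty beyond the content already recorded in Lemma \ref{xxlem3.1}.
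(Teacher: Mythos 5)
Your proposal is correct and follows the same route as the paper, which simply observes that the lemma follows from Lemma \ref{xxlem3.1}: specialize the change-of-basis formulas in Lemma \ref{xxlem3.1}(b,e) to diagonal scalings of $x_2$ (resp.\ $y_2$), read off the entrywise scalar factors (nontrivial exactly when $s\neq t$, resp.\ $i\neq j$), and note that since $Q=(q,0)$ and $P=(p,0)$ these scalings preserve the relations \eqref{NRx} and \eqref{NRy}, so the transformed $\Sigma$ is again a C-solution for the same $(P,Q)$ and parts (c),(d) are immediate normalizations. This matches the paper's (essentially omitted) argument, with your write-up if anything more explicit.
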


According to Lemma \ref{xxlem4.2} we may assume that 
$a_{1211}=0$, or $1$ and $a_{1221}=0$, or $1$. From now 
on we will only consider those solutions with $\Sigma_{12}\neq 0$. 
(If $\Sigma_{12}=0$, then use Proposition \ref{xxprop3.5}(a).)
We may further assume that the first column of $\Sigma_{12}$
is nonzero. If the second column is nonzero, then by
switch $x_1$ with $x_2$ we obtain that the first column of 
$\Sigma_{12}$ is nonzero. Hence there are three cases
to consider (up to a linear equivalence):

Case (i): $a_{1211}=1$, $a_{1221}=0$. The Maple gives
no C-solution.

Case (ii): $a_{1211}=0$, $a_{1221}=1$. The Maple gives
two C-solutions. One of which has $\Sigma_{21}=0$, so 
we omit this one by Proposition \ref{xxprop3.5}(b). 
The other is 
$\Sigma=\begin{pmatrix}
0&   0&  0&   f\\
0&   0&  1&   0\\
0&  -fg& 0&   0\\
g&   0&  0&   0
\end{pmatrix}$ with $fg\neq 0$, and $p=q$ and $p^2=-1$. Using
Lemma \ref{xxlem4.2} above $\Sigma$ is linearly equivalent to

Algebra $\BB$: $\Sigma=h\begin{pmatrix}
0&   0&  0&   1\\
0&   0&  1&   0\\
0&  -1& 0&   0\\
1&   0&  0&   0
\end{pmatrix}$ and $M=h\begin{pmatrix}
0&   0&  0&   1\\
0&   0&  -1&   0\\
0&  1& 0&   0\\
1&   0&  0&   0
\end{pmatrix}$;
and $P=(p,0)=Q$ and $p^2=-1$. 

The determinant of $\sigma$ is
$$\det \sigma
\begin{pmatrix}
x_1\\x_2\end{pmatrix}
=ph^2\begin{pmatrix}
1 & 0\\0 & -1\end{pmatrix}
\begin{pmatrix}
x_1\\x_2\end{pmatrix}.$$

Recall that $P^{\circ}$ denotes the set 
$(p_{12}^{-1},-p_{12}^{-1}p_{11})$ where 
$P=(p_{12},p_{11})$.

\begin{definition}
\label{xxdefn4.3}
Let $(k_{Q}[x_1,x_2])_{P}[y_1,y_2;\sigma]$ and 
$(k_{Q'}[x_1,x_2])_{P'}[y_1,y_2;\sigma']$ be two
double extensions.
\begin{enumerate}
\item
Two double extensions are called {\it $\Sigma$-$M$-dual} 
if the matrix $\{M,Q^{\circ},P^{\circ}\}$ is equivalent 
to $\{\Sigma',P',Q'\}$ in the sense of Definition 
\ref{xxdefn3.3}.
\item
A double extension is called {\it $\Sigma$-$M$-selfdual}
if $\{M,Q^{\circ},P^{\circ}\}$ is equivalent to 
$\{\Sigma,P,Q\}$ in the sense of Definition \ref{xxdefn3.3}.
\end{enumerate}
\end{definition}

By Proposition \ref{xxprop3.7}, if 
$(k_{Q}[x_1,x_2])_{P}[y_1,y_2;\sigma]$ and 
$(k_{Q'}[x_1,x_2])_{P'}[y_1,y_2;\sigma']$ are $\Sigma$-$M$-dual,
then  
$$(k_{Q}[x_1,x_2])_{P}[y_1,y_2;\sigma]^{\gamma}
\cong ((k_{Q'}[x_1,x_2])_{P'}[y_1,y_2;\sigma'])^{op}$$ 
for some automorphism twist $\gamma$. In particular, if 
$(k_{Q}[x_1,x_2])_{P}[y_1,y_2;\sigma]$ is $\Sigma$-$M$-selfdual
then 
$$(k_{Q}[x_1,x_2])_{P}[y_1,y_2;\sigma]^{\gamma}
\cong (k_{Q}[x_1,x_2])_{P}[y_1,y_2;\sigma]^{op}$$
for some $\gamma$.

It is easy to verify that the algebra $\BB$ is $\Sigma$-M-selfdual.
Also the algebra $\BB$ contains two cases with the same
matrix $\Sigma$, namely, $p=i$ and $p=-i$. 

Case (iii): $a_{1211}=1$, $a_{1221}=1$. There are four C-solutions
such that $\Sigma_{21}\neq 0$. All four solutions are linearly 
equivalent in the sense of Definition \ref{xxdefn3.3}. Here we use 
the linear equivalences of the form  $(\Sigma, (p,0),(q,0))\sim 
(\Sigma',(p',0),(q',0))$ where $p'$ is $p$ or $p^{-1}$
and $q'$ is $q$ or $q^{-1}$. So up to linear equivalences, 
we only have one C-solution:

Algebra $\CC$: 
$\Sigma=h \begin{pmatrix}
-1&   p^2&  1&   -p\\
-p&   1&  1&   -p\\
-p&  -2p^2& p&   -p\\
-p&   p^2&  1&   -1
\end{pmatrix}$ and $M=h\begin{pmatrix}
-1&   1&  p^2&   -p\\
-p&   p&  -2p^2&   -p\\
-p&  1& 1&   -p\\
-p&   1&  p^2&   -1
\end{pmatrix}$; and $P=(p,0)=Q$ and $p^2+p+1=0$. 

The determinant of $\sigma$ is 
$$\det \sigma
\begin{pmatrix}
x_1\\x_2\end{pmatrix}
=-3h^2\begin{pmatrix}
p & 0\\0 & 1\end{pmatrix}
\begin{pmatrix}
x_1\\x_2\end{pmatrix}.$$

It is easy to check that the algebra $\CC$ is 
$\Sigma$-M-selfdual. Of course the equation
$p^2+p+1=0$ has two solutions, and 
we may think the algebra $\CC$ contains two different 
cases. This is the end of {\bf Subcase 4.2.2}.

\bigskip
\noindent
{\bf Subcase 4.2.3:} $Q=(-1,0)$. Similar to the argument
given in {\bf Subcase 4.2.2} we need consider the following
three cases:
$(a_{1211},a_{1221})=(1,0)$ or
$(a_{1211},a_{1221})=(0,1)$ or 
$(a_{1211},a_{1221})=(1,1)$. 

Case (i) $a_{1211}=1$, $a_{1221}=0$. C-solutions have 
$\Sigma_{21}=0$. So we stop here.

Case (ii): $a_{1211}=0$, $a_{1221}=1$. Only one
C-solution has the property $\Sigma_{21}\neq 0$.

Algebra $\DD$: 
$\Sigma=h \begin{pmatrix}
-p&   0&  0&   0\\
0&   -p^2&  1&   0\\
0&  0& p&   0\\
1&   0&  0&   1
\end{pmatrix}$ and 
$M= h \begin{pmatrix}
-p&   0&  0&   0\\
0&   p&  0&   0\\
0&  1& -p^2&   0\\
1&   0&  0&   1
\end{pmatrix}$;
$P=(p,0)$ where $p$ is a general parameter which 
could be $\pm 1$ and $Q=(-1,0)$.

The determinant of $\sigma$ is 
$$\det \sigma
\begin{pmatrix}
x_1\\x_2\end{pmatrix}
=-p^2h^2\begin{pmatrix}
1 & 0\\0 & 1\end{pmatrix}
\begin{pmatrix}
x_1\\x_2\end{pmatrix}.$$

Case (iii): $a_{1211}=1$, $a_{1221}=1$.
There are two C-solutions such that $\Sigma_{21}\neq 0$.

Algebra $\EE$: 
$\Sigma=h \begin{pmatrix}
0&   0&  1&   1\\
0&   0&  1&   -1\\
-1&  1& 0&   0\\
1&   1&  0&   0
\end{pmatrix}$ and $M=h\begin{pmatrix}
0&   1&  0&   1\\
-1&   0&  1&   0\\
0&  1& 0&   -1\\
1&   0&  1&   0
\end{pmatrix}$; 
$P=(p,0)$ where $p^2=-1$ and $Q=(-1,0)$.

The determinant of $\sigma$ is 
$$\det \sigma
\begin{pmatrix}
x_1\\x_2\end{pmatrix}
=2ph^2\begin{pmatrix}
0 & 1\\-1 & 0\end{pmatrix}
\begin{pmatrix}
x_1\\x_2\end{pmatrix}.$$

Algebra $\FF$: 
$\Sigma=h \begin{pmatrix}
-1&   -p&  1&   -1\\
-p&   1&  1&   1\\
-p&  p& p&   1\\
-p&   -p&  1&   -p
\end{pmatrix}$ and $M=h\begin{pmatrix}
-1&   1&  -p&   -1\\
-p&   p&  p&   1\\
-p&  1& 1&   1\\
-p&   1&  -p&   -p
\end{pmatrix}$; 
$P=(p,0)$ where $p^2=-1$ and $Q=(-1,0)$.

The determinant of $\sigma$ is 
$$\det \sigma
\begin{pmatrix}
x_1\\x_2\end{pmatrix}
=-2ph^2\begin{pmatrix}
1 & 0\\0 & 1\end{pmatrix}
\begin{pmatrix}
x_1\\x_2\end{pmatrix}.$$

\bigskip
\noindent
{\bf Subcase 4.2.4:} $Q=(1,0)$. We can make linear
transformations of $X$ such that $\Sigma_{12}$ is
one of the standard forms:
$\Sigma_{12}=\begin{pmatrix}
a_1&0\\0&a_1\end{pmatrix}$ or
$\Sigma_{12}=\begin{pmatrix}
a&0\\1&a\end{pmatrix}$. In particular we may assume
either $(a_{1212},a_{1221})=(0,0)$ or
$(a_{1212},a_{1221})=(0,1)$.

Case (i): $(a_{1212},a_{1221})=(0,0)$. There is no 
C-solution with $\Sigma_{21}\neq 0$.

Case (ii): $(a_{1212},a_{1221})=(0,1)$. There is 
only one C-solution such that $\Sigma_{21}\neq 0$.

Algebra $\GG$: 
$\Sigma=h \begin{pmatrix}
p&    0&    0&   0\\
 p&  p^2&   1&   0\\
0 &    0&    p&  0\\
f&   0&  -1& 1
\end{pmatrix}$ and $M=h\begin{pmatrix}
p&    0&    0&   0\\
 0&  p&   0&   0\\
p &    1&    p^2&  0\\
f&   -1&  0& 1
\end{pmatrix}$ with $f\neq 0$;
$P=(p,0)$ where $p$ is general and $Q=(1,0)$.

The determinant of $\sigma$ is 
$$\det \sigma
\begin{pmatrix}
x_1\\x_2\end{pmatrix}
=p^2h^2\begin{pmatrix}
1 & 0\\0 & 1\end{pmatrix}
\begin{pmatrix}
x_1\\x_2\end{pmatrix}.$$

This is the end of Subsection 4.2 where 
$P=(p,0)$ and $p\neq 0,\pm 1$.

\subsection{Case three: $P=(-1,0)$.} 
\label{xxsubsec4.3}
Following steps before we consider four subcases.

\bigskip
\noindent
{\bf Subcase 4.3.1:} $Q=(1,1)$. Up to linear
equivalence, the System C has one solution 
with $\Sigma_{12}\neq 0$:

Algebra $\HH$:  $\Sigma=h \begin{pmatrix}
0&   0&   1&   0\\
0&   0&   f&   1\\
1&   0&   0&   0\\
f&   1&   0&   0
\end{pmatrix}$ and $M=h \begin{pmatrix}
0&   1&   0&   0\\
1&   0&   0&   0\\
0&   f&   0&   1\\
f&   0&   1&   0
\end{pmatrix}$;
$P=(-1,0)$ and $Q=(1,1)$.

The determinant of $\sigma$ is 
$$\det \sigma
\begin{pmatrix}
x_1\\x_2\end{pmatrix}
=h^2\begin{pmatrix}
1 & 0\\2f & 1\end{pmatrix}
\begin{pmatrix}
x_1\\x_2\end{pmatrix}.$$

\bigskip
\noindent
{\bf Subcase 4.3.2:} $Q=(q,0)$ where $q\neq \pm 1$.
The system C has four solutions up
to linear transformation with $\Sigma_{12}\neq 0$
and $\Sigma_{21}\neq 0$:

Algebra $\II$: 
$\Sigma=h\begin{pmatrix}
-q&    -q &  1&   -q\\
1&    1&   1&   -q\\
1&    q&   q&   -q\\
-1&   -q&   1&   -1  
\end{pmatrix}$ and $M=h\begin{pmatrix}
-q&    1 &  -q&   -q\\
1&    q&   q&   -q\\
1&    1&   1&   -q\\
-1&   1&   -q&   -1  
\end{pmatrix}$; $P=(-1,0)$ and $Q=(q,0)$ where
$q^2=-1$.

Algebra $\II$ is $\Sigma$-M-dual to algebra $\FF$.
The determinant of $\sigma$ is 
$$\det \sigma
\begin{pmatrix}
x_1\\x_2\end{pmatrix}
=2h^2\begin{pmatrix}
1 & 0\\0 & -1\end{pmatrix}
\begin{pmatrix}
x_1\\x_2\end{pmatrix}.$$

Algebra $\JJ$: 
$\Sigma=h\begin{pmatrix}
0&   1&   0&   1\\
-1&   0&   1&   0\\
0&   1&   0&   -1\\
1&   0&   1&   0
\end{pmatrix}$ and $M=\begin{pmatrix}
0&   0&   1&   1\\
0&   0&   1&   -1\\
-1&   1&   0&   0\\
1&   1&   0&   0
\end{pmatrix}$;
$P=(-1,0)$ and $Q=(q,0)$ where $q^2=-1$.

Algebra $\JJ$ is $\Sigma$-M-dual to algebra 
$\EE$. The determinant of $\sigma$ is 
$$\det \sigma
\begin{pmatrix}
x_1\\x_2\end{pmatrix}
=2h^2\begin{pmatrix}
1 & 0\\0 & 1\end{pmatrix}
\begin{pmatrix}
x_1\\x_2\end{pmatrix}.$$

Algebra $\KK$: 
$\Sigma=h\begin{pmatrix}
1&   0&   0&   0\\
0&   0&   0&   1\\
0&   0&   1&   0\\
0&   f&   0&   0
\end{pmatrix}$ and $M=h\begin{pmatrix}
1&   0&   0&   0\\
0&   1&   0&   0\\
0&   0&   0&   1\\
0&   0&   f&   0
\end{pmatrix}$ where $f\neq 0$; 
$P=(-1,0)$ and $Q=(q,0)$ where $q$ is a general parameter 
which could be $\pm 1$.

The determinant of $\sigma$ is 
$$\det \sigma
\begin{pmatrix}
x_1\\x_2\end{pmatrix}
=h^2\begin{pmatrix}
1 & 0\\0 & f\end{pmatrix}
\begin{pmatrix}
x_1\\x_2\end{pmatrix}.$$

Algebra $\LL$: $\Sigma=h\begin{pmatrix}
0&    0&   f&   0\\
0&    0&   0&   1\\
f&    0&   0&   0\\
0&   1&   0&   0
\end{pmatrix}$  and $M=h\begin{pmatrix}
0&    f&   0&   0\\
f&    0&   0&   0\\
0&    0&   0&   1\\
0&   0&   1&   0
\end{pmatrix}$ where $f\neq 0$; 
$P=(-1,0)$ and $Q=(q,0)$ where $q$ is a general parameter 
which could be $\pm 1$.

The determinant of $\sigma$ is 
$$\det \sigma
\begin{pmatrix}
x_1\\x_2\end{pmatrix}
=h^2\begin{pmatrix}
f^2 & 0\\0 & 1\end{pmatrix}
\begin{pmatrix}
x_1\\x_2\end{pmatrix}.$$

\bigskip

\noindent
{\bf Subcase 4.3.3:} $Q=(-1,0)$. There are nine
C-solutions up to linear transformation that have
$\Sigma_{12}\neq 0$ and $\Sigma_{21}\neq 0$.

Algebra $\MM$:
$\Sigma=h\begin{pmatrix}
0&   1&  1&  0\\
f&   0&  0&  -1\\
1&   0&  0&  -1\\
0&   -1&  -f&  0
\end{pmatrix}$ and $M=h\begin{pmatrix}
0&   1&  1&  0\\
1&   0&  0&  -1\\
f&   0&  0&  -1\\
0&   -f&  -1&  0
\end{pmatrix}$ where $f\neq 1$; $P=Q=(-1,0)$.

Algebra $\MM$ is $\Sigma$-M-selfdual.
The determinant of $\sigma$ is 
$$\det \sigma
\begin{pmatrix}
x_1\\x_2\end{pmatrix}
=(1-f)h^2\begin{pmatrix}
1 & 0\\0 & 1\end{pmatrix}
\begin{pmatrix}
x_1\\x_2\end{pmatrix}.$$

Algebra $\NN$:
$\Sigma=\begin{pmatrix}
0&   -g&  0&  f\\
g&   0&  f&  0\\
0&   f&  0&  -g\\
f&   0&  g&  0
\end{pmatrix}$ and $M=\begin{pmatrix}
0&   0&  -g&  f\\
0&   0&  f&  -g\\
g&   f&  0&  0\\
f&   g&  0&  0
\end{pmatrix}$
where $f^2\neq g^2$; $P=Q=(-1,0)$.

The determinant of $\sigma$ is 
$$\det \sigma
\begin{pmatrix}
x_1\\x_2\end{pmatrix}
=(f^2-g^2)\begin{pmatrix}
1 & 0\\0 & 1\end{pmatrix}
\begin{pmatrix}
x_1\\x_2\end{pmatrix}.$$

Algebra $\OO$:
$\Sigma=h\begin{pmatrix}
1&   0&  0&  f\\
0&   -1&  1&  0\\
0&   f&  -1&  0\\
1&   0&  0&  1
\end{pmatrix}$ and $M=h\begin{pmatrix}
1&   0&  0&  f\\
0&   -1&  f&  0\\
0&   1&  -1&  0\\
1&   0&  0&  1
\end{pmatrix}$
where $f\neq 1$; $P=Q=(-1,0)$. A special case 
is when $f=0$.
The algebra $\OO$ is $\Sigma$-M-selfdual.

The determinant of $\sigma$ is 
$$\det \sigma
\begin{pmatrix}
x_1\\x_2\end{pmatrix}
=(f-1)h^2\begin{pmatrix}
1 & 0\\0 & 1\end{pmatrix}
\begin{pmatrix}
x_1\\x_2\end{pmatrix}.$$

Algebra $\PP$:
$\Sigma=h\begin{pmatrix}
0&   0&  1&  f\\
0&   0&  1&  1\\
1&   -f&  0&  0\\
-1&   1&  0&  0
\end{pmatrix}$ and $M=h\begin{pmatrix}
0&   1&  0&  f\\
1&   0&  -f&  0\\
0&   1&  0&  1\\
-1&   0&  1&  0
\end{pmatrix}$ where $f\neq 1$; $P=Q=(-1,0)$. A
special case is 
when $f=0$. 

The algebra $\PP$ is $\Sigma$-M-dual to the algebra $\NN$.
The determinant of $\sigma$ is 
$$\det \sigma
\begin{pmatrix}
x_1\\x_2\end{pmatrix}
=(1-f)h^2\begin{pmatrix}
1 & 0\\0 & 1\end{pmatrix}
\begin{pmatrix}
x_1\\x_2\end{pmatrix}.$$

Algebra  $\QQ$:
$\Sigma=h\begin{pmatrix}
0&   0&  1&  0\\
1&   1&  1&  0\\
-1&   0&  0&  0\\
1&   0&  -1&  1
\end{pmatrix}$ and $M=h\begin{pmatrix}
0&   1&  0&  0\\
-1&   0&  0&  0\\
1&   1&  1&  0\\
1&   -1&  0&  1
\end{pmatrix}$; $P=Q=(-1,0)$. 

The determinant of $\sigma$ is 
$$\det \sigma
\begin{pmatrix}
x_1\\x_2\end{pmatrix}
=h^2\begin{pmatrix}
-1 & 0\\0 & 1\end{pmatrix}
\begin{pmatrix}
x_1\\x_2\end{pmatrix}.$$

Algebra $\RR$:
$\Sigma=M=h\begin{pmatrix}
1&   1&  1&  0\\
0&   0&  1&  0\\
0&   1&  0&  0\\
0&   -1&  -1&  1
\end{pmatrix}$; $P=Q=(-1,0)$. So the algebra $\RR$ is
$\Sigma$-M-selfdual. The determinant of $\sigma$ is 
$$\det \sigma
\begin{pmatrix}
x_1\\x_2\end{pmatrix}
=h^2\begin{pmatrix}
0 & 1\\-1 & 0\end{pmatrix}
\begin{pmatrix}
x_1\\x_2\end{pmatrix}.$$

Algebra $\SSS$:
$\Sigma=M=h\begin{pmatrix}
-1&   1&  1&  1\\
1&   -1&  1&  1\\
1&   1&  -1&  1\\
1&   1&  1&  -1
\end{pmatrix}$; $P=Q=(-1,0)$. The algebra $\SSS$ is
$\Sigma$-M-selfdual.
The determinant of $\sigma$ is 
$$\det \sigma
\begin{pmatrix}
x_1\\x_2\end{pmatrix}
=4h^2\begin{pmatrix}
1 & 0\\0 & 1\end{pmatrix}
\begin{pmatrix}
x_1\\x_2\end{pmatrix}.$$

Algebra $\TT$: 
$\Sigma=h\begin{pmatrix}
-1&   1&  1&  1\\
1&   -1&  1&  1\\
1&   1&  1&  -1\\
1&   1&  -1&  1
\end{pmatrix}$ and $M=h\begin{pmatrix}
-1&   1&  1&  1\\
1&   1&  1&  -1\\
1&   1&  -1&  1\\
1&   -1&  1&  1
\end{pmatrix}$; $P=Q=(-1,0)$.

The determinant of $\sigma$ is 
$$\det \sigma
\begin{pmatrix}
x_1\\x_2\end{pmatrix}
=4h^2\begin{pmatrix}
0 & 1\\1 & 0\end{pmatrix}
\begin{pmatrix}
x_1\\x_2\end{pmatrix}.$$

Algebra $\UU$: 
$\Sigma=h\begin{pmatrix}
-1&   1&  1&  1\\
1&   1&  1&  -1\\
1&   1&  -1&  1\\
1&   -1&  1&  1
\end{pmatrix}$ and $M=h\begin{pmatrix}
-1&   1&  1&  1\\
1&   -1&  1&  1\\
1&   1&  1&  -1\\
1&   1&  -1&  1
\end{pmatrix}$; $P=Q=(-1,0)$.
The algebras $\TT$ and $\UU$ are $\Sigma$-M-dual.
The determinant of $\sigma$ is 
$$\det \sigma
\begin{pmatrix}
x_1\\x_2\end{pmatrix}
=4h^2\begin{pmatrix}
1 & 0\\0 & 1\end{pmatrix}
\begin{pmatrix}
x_1\\x_2\end{pmatrix}.$$

\bigskip
\noindent
{\bf Subcase 4.3.4:} $Q=(1,0)$.
We can make linear transformations of $X$ to make that
$\Sigma_{12}$ is one of the standard forms:
$\Sigma_{12}=\begin{pmatrix}c_1&0\\0&c_1\end{pmatrix}$
or 
$\Sigma_{12}=\begin{pmatrix}c&0\\1&c\end{pmatrix}$. In
particular we may assume either $a_{1212}=0=a_{1221}$ 
or $a_{1212}=0$ and $a_{1221}=1$.

Let's consider the first case by assuming $a_{1212}=0
=a_{1221}$. If $a_{1211}=0=a_{1222}$, then 
$\Sigma_{12}=0$, we don't need to consider this.
Otherwise after exchanging $x_1$ and $x_2$, we may 
always assume that $a_{1211}\neq 0$. Replacing $y_i$ 
by scalar multiples, we may assume that
$a_{1211}=1$. In addition to those equivalent to 
the algebras $\KK$ and $\LL$,
the System C has two more solutions:

Algebra $\VV$:
$\Sigma=h\begin{pmatrix}
0&   1&  1&  0\\
0&   1&  0&  0\\
-1&   1&  0&  0\\
0&   0&  0&  1
\end{pmatrix}$ and $M=h\begin{pmatrix}
0&   1&  1&  0\\
-1&   0&  1&  0\\
0&   0&  1&  0\\
0&   0&  0&  1
\end{pmatrix}$; $P=(-1,0)$ and $Q=(1,0)$.

The determinant of $\sigma$ is 
$$\det \sigma
\begin{pmatrix}
x_1\\x_2\end{pmatrix}
=h^2\begin{pmatrix}
-1 & 1\\0 & 1\end{pmatrix}
\begin{pmatrix}
x_1\\x_2\end{pmatrix}.$$

Algebra $\WW$:
$\Sigma=h\begin{pmatrix}
0&   f&  1&  0\\
1&   0&  0&  -1\\
1&   0&  0&  f\\
0&   -1&  1&  0
\end{pmatrix}$ and $M=h\begin{pmatrix}
0&   1&  f&  0\\
1&   0&  0&  f\\
1&   0&  0&  -1\\
0&   1&  -1&  0
\end{pmatrix}$
where $f\neq -1$; $P=(-1,0)$ and $Q=(1,0)$.

The determinant of $\sigma$ is 
$$\det \sigma
\begin{pmatrix}
x_1\\x_2\end{pmatrix}
=(f+1)h^2\begin{pmatrix}
1 & 0\\0 & 1\end{pmatrix}
\begin{pmatrix}
x_1\\x_2\end{pmatrix}.$$

In the rest of Subcase 4.3.4, we assume that 
$a_{1212}=0$ and $a_{1221}=1$ and $a_{1211}=a_{1222}$.

The System C has two solutions:

Algebra $\XX$:
$\Sigma=h\begin{pmatrix}
0&   0&  1&  0\\
0&   0&  1&  1\\
1&   0&  0&  0\\
1&  1&  0&  0
\end{pmatrix}$ and $M=h\begin{pmatrix}
0&   1&  0&  0\\
1&   0&  0&  0\\
1&   1&  0&  1\\
1&  0&  1&  0
\end{pmatrix}$; $P=(-1,0)$ and $Q=(1,0)$.

The determinant of $\sigma$ is 
$$\det \sigma
\begin{pmatrix}
x_1\\x_2\end{pmatrix}
=h^2\begin{pmatrix}
1 & 0\\2 & 1\end{pmatrix}
\begin{pmatrix}
x_1\\x_2\end{pmatrix}.$$

Algebra $\YY$:
$\Sigma=h\begin{pmatrix}
1&   0&  0&  0\\
f&   -1&  1&  0\\
0&   0&  1&  0\\
1&   0&  f&  -1
\end{pmatrix}$ and $M=h\begin{pmatrix}
1&   0&  0&  0\\
0&   1&  0&  0\\
f&   1&  -1&  0\\
1&   f&  0&  -1
\end{pmatrix}$; $P=(-1,0)$ and $Q=(1,0)$.

The determinant of $\sigma$ is 
$$\det \sigma
\begin{pmatrix}
x_1\\x_2\end{pmatrix}
=h^2\begin{pmatrix}
1 & 0\\0 & 1\end{pmatrix}
\begin{pmatrix}
x_1\\x_2\end{pmatrix}.$$

\subsection{Case four: $P=(1,0)$.}  This is the
last piece of the classification. As before
we consider the following four subcases.

\bigskip
\noindent
{\bf Subcase 4.4.1:} $Q=(1,1)$. Up to linear
transformation all 
C-solutions give rise to iterated Ore extensions.

\bigskip
\noindent
{\bf Subcase 4.4.2:} $Q=(q,0)$ where $q\neq \pm 1$. Up
to linear transformation all 
C-solutions give rise to iterated Ore extensions.

\bigskip
\noindent
{\bf Subcase 4.4.3:} $Q=(1,0)$. All C-solutions
give rise to iterated Ore extensions up to 
linear transformation.

\bigskip
\noindent
{\bf Subcase 4.4.4:} $Q=(-1,0)$.
Up to linear transformation we have two
C-solutions which could lead to non-trivial
double extensions. The first one is
$\Sigma=h\begin{pmatrix}
-1&   0&  0&  0\\
0&   -1&  1&  0\\
0&   0&  1&  0\\
1&   0&  0&  1
\end{pmatrix}$ and $M=h\begin{pmatrix}
-1&   0&  0&  0\\
0&   1&  0&  0\\
0&   1&  -1&  0\\
1&   0&  0&  1
\end{pmatrix}$; $P=(1,0)$ and $Q=(-1,0)$. This is a special
case of the algebra $\DD$. The final case is

Algebra $\ZZ$:
$\Sigma=h\begin{pmatrix}
1&   0&  0&  1\\
0&   1&  1&  0\\
0&   f&  -1&  0\\
f&   0&  0&  -1
\end{pmatrix}$ and $M=h\begin{pmatrix}
1&   0&  0&  1\\
0&   -1&  f&  0\\
0&   1&  1&  0\\
f&   0&  0&  -1
\end{pmatrix}$
where $f(1+f)\neq 0$; $P=(1,0)$ and $Q=(-1,0)$.

The determinant of $\sigma$ is 
$$\det \sigma
\begin{pmatrix}
x_1\\x_2\end{pmatrix}
=-(f+1)h^2\begin{pmatrix}
1 & 0\\0 & 1\end{pmatrix}
\begin{pmatrix}
x_1\\x_2\end{pmatrix}.$$

When $f=-1$, the matrix $\Sigma$ is singular.
When $f=0$, then $\Sigma_{21}=0$. 
Note that the algebra $\ZZ$ is $\Sigma$-M-dual to 
the algebra $\WW$. To see this we need to use 
linear transformations. The $M$-matrix of the 
algebra $\ZZ$ can be changed to
$$M=h\begin{pmatrix}
0&   1&  \sqrt{f}&  0\\
1&   0&  0&  -\sqrt{f}\\
\sqrt{f}&   0&  0&  1\\
0&   -\sqrt{f}&  1&  0
\end{pmatrix}$$ 
after a linear transformation
$\begin{pmatrix} y_1\\y_2\end{pmatrix}
\to \begin{pmatrix} \sqrt{f}&1\\\sqrt{f}&-1
\end{pmatrix}\begin{pmatrix} y_1\\y_2\end{pmatrix}
$ which is equivalent to the $\Sigma$-matrix of
the algebra $\WW$ up to a linear transformation. 
This also shows that it is not obvious when two 
algebras are $\Sigma$-$M$-dual in general.

Finally we list all relations of the
algebra $\ZZ$:
$$\begin{aligned}
x_2x_1&=-x_1x_2\\
y_2y_1&=y_1y_2\\
y_1x_1&=x_1y_2+x_2y_2\\
y_1x_2&=x_2y_1+x_1y_2\\
y_2x_1&=fx_2y_1-x_1y_2\\
y_2x_2&=fx_1y_2-x_2y_2
\end{aligned}
$$

We summarize what we did in this section in
the following proposition that is also part (b) of
Theorem \ref{xxthm0.1}. We use 
$\LIST$ to denote the class consisting 
of all these 26 algebras from $\AAA$ to $\ZZ$.

\begin{proposition}
\label{xxprop4.4}
Suppose that $B:=
(k_{Q}[x_1,x_2])_{P}[y_1,y_2;\sigma,\delta,\tau]$ 
is a connected graded double extension with 
$x_1,x_2,y_1,y_2$ in degree $1$. If $B$ is not 
an iterated Ore extension of $k_{Q}[x_1,x_2]$, then 
the trimmed double extension
$(k_{Q}[x_1,x_2])_{P}[y_1,y_2;\sigma]$ is isomorphic to 
one of in the $\LIST$.
\end{proposition}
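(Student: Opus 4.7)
The plan is to reduce the statement to a direct enumeration of solutions of System C, since the assertion depends only on the tuple $(Q,P,\Sigma)$ determining the trimmed extension.

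First, I would fix the possible shapes of $Q$ and $P$. By Lemma \ref{xxlem2.4} applied to $A$, we have $A=k_Q[x_1,x_2]$ with either $Q=(1,1)$ or $Q=(q,0)$. Since $B$ is a double extension, the quotient $B/(A_{\geq 1})$ is a $2$-dimensional Artin-Schelter regular algebra isomorphic to $k\langle y_1,y_2\rangle/(y_2y_1-p_{12}y_1y_2-p_{11}y_1^2)$, and the same lemma gives $P=(1,1)$ or $P=(p,0)$. The homomorphism $\sigma$ is encoded in the matrix $\Sigma=(a_{ijst})$ from \eqref{E3.0.1}; by Proposition \ref{xxprop1.6}(a), the constraints \eqref{C1ij}--\eqref{C6ij} are necessary and sufficient for $(\sigma,P)$ to produce a right double extension, and by Proposition \ref{xxprop1.6}(b) the additional requirement $\det\Sigma\neq0$ (equivalent to invertibility of $\sigma$) upgrades this to a full double extension. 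So $\Sigma$ is a C-solution.

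Second, I would apply Proposition \ref{xxprop3.5}: if $\Sigma_{12}=0$, or if $p_{11}=0$ and $\Sigma_{21}=0$, then the full extension with any $\delta,\tau$ is an iterated Ore extension of $k_Q[x_1,x_2]$, contradicting the hypothesis on $B$. So only C-solutions with $\Sigma_{12}\neq0$ (and with $\Sigma_{21}\neq0$ when $p_{11}=0$) need to be considered.

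Third, I would split into sixteen subcases according to the choices of $P\in\{(1,1),(p,0)\text{ with }p\neq\pm1,(-1,0),(1,0)\}$ and $Q$ of the same four forms, mirroring Subsections 4.1--4.4. In each subcase I would invoke Lemma \ref{xxlem3.1} to perform linear changes of $\{x_1,x_2\}$ and $\{y_1,y_2\}$ compatible with $P$ and $Q$, together with the twist equivalence of Lemma \ref{xxlem3.2}, to place $\Sigma$ into a normal form---first fixing the shape of $\Sigma_{12}$ (using the rank and a distinguished nonzero column) and then pinning down the entries $a_{1211}$ and $a_{1221}$ to values in $\{0,1\}$ as in Lemma \ref{xxlem4.2}. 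The resulting finite system of homogeneous quadratic equations in at most sixteen unknowns would be solved by Maple. Each remaining C-solution would then, after applying the equivalence of Definition \ref{xxdefn3.3} and the $\Sigma$-$M$-duality of Proposition \ref{xxprop3.7} to eliminate redundancies, be recognized as a member of one of the families $\AAA,\ldots,\ZZ$ in $\LIST$.

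The main obstacle is not any single deep step but the bookkeeping in the case analysis. The delicate sub-subcases occur at roots-of-unity values of $p$ (in particular $p^2=-1$ and $p^2+p+1=0$, cf.\ Lemma \ref{xxlem4.1}), where extra C-solutions exist that do not appear at generic $p$; and the subcase $P=Q=(-1,0)$, which produces nine distinct equivalence classes (the algebras $\MM$--$\UU$) because the symmetry group acting on $\Sigma$ is largest there. Throughout one must verify that the tabulated equivalence classes are genuinely distinct rather than reparametrizations of one another, which is done by comparing the matrices $\Sigma$, $M$, and the invariant $\det\sigma$ recorded in the individual cases.
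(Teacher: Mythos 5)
Your proposal takes essentially the same route as the paper: Proposition \ref{xxprop4.4} is proved there precisely by the case analysis of Sections 3 and 4, namely reducing the data of the trimmed extension to a C-solution $(\Sigma,Q,P)$, discarding the solutions with $\Sigma_{12}=0$ (and with $\Sigma_{21}=0$ when $p_{11}=0$) via Proposition \ref{xxprop3.5} since those force $B$ to be an iterated Ore extension for every $\delta,\tau$, and then normalizing by the linear and twist equivalences of Lemmas \ref{xxlem3.1}, \ref{xxlem3.2} and \ref{xxlem4.2} in the sixteen $(P,Q)$ subcases solved with Maple. The only cosmetic difference is that you invoke Proposition \ref{xxprop1.6} (which gives sufficiency of the constraints) to conclude $\Sigma$ is a C-solution, whereas what is actually needed, and what the paper uses, is the necessity direction: the constraints \eqref{C1ij}--\eqref{C6ij} hold because $\sigma$ is an algebra homomorphism respecting the relation of $k_Q[x_1,x_2]$ and because (R3.1)--(R3.3) hold in any right double extension, with $\det\Sigma\neq 0$ coming from the invertibility of $\sigma$; this does not affect the argument.
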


\begin{remark}
\label{xxrem4.5}
The algebras $\EE$ and $\JJ$ are $\Sigma$-$M$-dual. Consequently,
$\EE$ and $\JJ$ are isomorphic to each other by sending
$x_i$'s to $y_i$'s. However non-trimmed double
extensions extended from the algebras $\EE$ and $\JJ$ may not be 
isomorphic, because the roles played by
$x_i$'s and $y_i$'s are different in the non-trimmed
double extensions $(k_Q[x_1,x_2])_P[y_1,y_2;\sigma,\delta,\tau]$.
For the purpose of finding all non-trimmed double extensions
(which is another interesting project), we want to 
distinguish $\EE$ from $\JJ$ in our list. This remark applies to all 
$\Sigma$-$M$-dual pairs.
\end{remark}

\section{Properties of double extensions}
\label{xxsec5}

In this section we prove that all trimmed double 
extensions in the $\LIST$ classified in the
last section are strongly noetherian, Auslander regular
and Cohen-Macaulay. It seems to us that there is no
uniform method that works for all algebras, so we have to show 
this case by case. First we recall a result of \cite{ZZ}.

\begin{theorem}
\label{xxthm5.1}
\cite[Theorem 0.2]{ZZ}
Let $A$ be a regular algebra. Then any double extension 
$A_P[y_1,y_2;\sigma,\delta,\tau]$ is regular.  As a consequence, 
a double extension of the form  
$(k_{Q}[x_1,x_2])_P[y_1,y_2;\sigma,\delta,\tau]$
is regular.
\end{theorem}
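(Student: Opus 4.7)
My overall plan is to verify the three Artin--Schelter conditions (AS1), (AS2), (AS3) in turn, using the fact that the PBW basis $\{y_1^{n_1}y_2^{n_2}\}$ from Definition \ref{xxdefn1.3}(aiii) makes $B$ a very rigid deformation of $A\otimes k_{P}[y_1,y_2]$. Condition (AS3) is essentially free: since $B$ is left free over $A$ with basis $\{y_1^{n_1}y_2^{n_2}\}_{n_1,n_2\geq 0}$ and each $y_i$ has positive degree, the Hilbert series factors as $H_B(t)=H_A(t)H_{k_P[y_1,y_2]}(t)$, so $\GKdim B=\GKdim A+2<\infty$ as long as $\GKdim A<\infty$.

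For (AS1) and (AS2), I would first reduce to the trimmed case. Introduce an auxiliary filtration on $B$ by counting the combined $y$-degree: set $F^n B=\sum_{i+j\leq n}Ay_1^iy_2^j=\sum_{i+j\leq n}y_1^iy_2^jA$ (equality uses Definition \ref{xxdefn1.3}(aiv,biv)). Under this filtration the lower-order corrections $\delta$ and $\tau$ from (R1)--(R2) are suppressed, so the associated graded ring $\gr_F B$ is the trimmed double extension $A_P[y_1,y_2;\sigma]$. Because the filtration is locally finite and the algebra is connected graded, standard lifting results for global dimension and the Gorenstein property (as in the usual Ore-extension arguments) let us transfer both conclusions from $\gr_F B$ back to $B$, provided we pin down the Gorenstein parameter.

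The remaining and hardest task is to prove that the trimmed $A_P[y_1,y_2;\sigma]$ is Artin--Schelter regular. My plan here is to construct a minimal free resolution of $k_B$ of length $\gldim A+2$ by splicing the minimal resolution of $k_A$ with a Koszul-type length-2 complex that encodes the single relation (R1). Concretely, I would form a bicomplex whose horizontal differentials come from the resolution of $k_A$ twisted by the matrix homomorphism $\sigma$, and whose vertical differentials come from the relation $y_2y_1-p_{12}y_1y_2-p_{11}y_1^2$ together with the four mixing relations (R2). Exactness of this total complex follows from the freeness statements in Definition \ref{xxdefn1.3}(aiii) and (biii), and the fact that $\sigma$ is invertible (Lemma 1.9 of \cite{ZZ}) is exactly what makes the vertical differentials injective on the appropriate graded pieces.

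The main obstacle, as always for such constructions, is checking the Gorenstein condition (AS2) with the correct degree shift and showing that the dual complex computes $k_B$ rather than some twist. This is where the determinant $\det\sigma$ enters: in the dualization, $\det\sigma$ appears as an automorphism of $A$ that mediates between the left and right $A$-module structures on $\Ext^d_B(k,B)$. I expect the proof to hinge on showing that $\det\sigma$ is a genuine graded algebra automorphism of $A$ (which is the content of \cite[Section 3]{ZZ}) and that the top Gorenstein shift for $B$ equals the Gorenstein shift for $A$ plus $\deg y_1+\deg y_2$. Once (AS2) is established for the trimmed case, the filtration argument above concludes the theorem, and the corollary for $A=k_Q[x_1,x_2]$ is immediate since $k_Q[x_1,x_2]$ is regular of dimension 2.
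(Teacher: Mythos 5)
Your plan follows essentially the same route as the actual proof of this statement: the paper itself does not reprove Theorem \ref{xxthm5.1} but imports it from \cite[Theorem 0.2]{ZZ}, and the argument there is exactly what you outline --- first establish regularity of the trimmed double extension $A_P[y_1,y_2;\sigma]$ by constructing a free resolution of $k_B$ out of the resolution of $k_A$, with $\det\sigma$ (shown in \cite[Section 3]{ZZ} to be a graded algebra automorphism of $A$) controlling the Gorenstein dualization, and then handle general $(\delta,\tau)$ via a filtration whose associated graded ring is the trimmed extension (\cite[Lemma 4.4]{ZZ}), the same device this paper invokes in its proof of Theorem \ref{xxthm0.1}(a). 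The one caveat is that your sketch defers the substantive verifications --- exactness of the spliced total complex and the Gorenstein condition with the correct shift, which do not follow formally from freeness and invertibility of $\sigma$ alone --- and that is precisely where the bulk of the work in \cite[Section 4]{ZZ} is carried out; as a proposal it identifies the right ingredients, but those two steps would still have to be executed in detail.
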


This theorem ensures the Artin-Schelter regularity for 
algebras in the $\LIST$. For non-regular rings, it is 
convenient to use dualizing complexes which was introduced 
in \cite{Ye}. We will only use some facts about rings 
with dualizing complexes and refer to \cite{YZ} for 
definitions and properties and other details. 

An algebra $A$ is called {\it strongly 
noetherian} if for every commutative noetherian 
(ungraded) ring $S$, $A\otimes S$ is noetherian
\cite[p.580]{ASZ}. Let $A$ be a noetherian algebra
with a dualizing complex $R$. For any left 
$A$-module $M$, the grade of $M$ is defined to be
$$j_R(M)=\inf\{i\;|\; \Ext^i_A(M,R)\neq 0\}.$$
The grade of a right $A$-module is defined similarly.
The dualizing complex is called {\it Cohen-Macaulay} 
if there is a finite integer $d$ such that
$$j_R(M)+\GKdim M=d$$
for all finitely  generated left and right nonzero
$A$-modules $M$. The dualizing complex $R$ is called
{\it Auslander} \cite[Definition 2.1]{YZ} if the 
following conditions hold.
\begin{enumerate}
\item
for every finitely generated left $A$-module $M$, 
every integer $q$, and any right $A$-submodule
$N\subset \Ext^q_A(M,R)$, one has $j(N)\geq q$;
\item
the same holds after exchanging left with right.
\end{enumerate}
Since $A$ is connected graded, the balanced dualizing 
complex over $A$ (which is unique) is defined \cite{Ye}. 
For simplicity, we say $A$ has {\it Auslander property} 
(respectively, {\it Cohen-Macaulay property}) if (a) 
the balanced dualizing complex over $A$ exists and (b) 
the balanced dualizing complex of $A$ has the Auslander 
property (respectively, Cohen-Macaulay property).

If $A$ is regular (or Artin-Schelter 
Gorenstein), then the balanced dualizing complex
has the form of ${^\sigma A}(-l)[-n]$. In this case,
our definition of Auslander regular
and Cohen-Macaulay is equivalent to the 
usually definition given in \cite{Le}, namely,
$A$ is Auslander regular and Cohen-Macaulay (by taking $R=A$
in the definition) if and only if (a) $A$ is 
Artin-Schelter regular and (b) the balance
dualizing complex of $A$ has Auslander and Cohen-Macaulay 
properties. One of the usefulness of dualizing complexes is 
that Auslander and Cohen-Macaulay properties are
defined for non-regular rings. For example, the Auslander 
and Cohen-Macaulay properties pass from a graded ring
to any of its factor rings without worrying 
the regularity. 

As we have seen that many double extensions are iterated 
Ore extensions. Those algebras are Auslander (and 
Auslander regular) and Cohen-Macaulay by the following 
lemma.

\begin{lemma}
\label{xxlem5.2} Let $B:=A[t;\sigma,\delta]$ be a connected 
graded Ore extension of a noetherian algebra $A$.
\begin{enumerate}
\item
If $A$ is strongly noetherian, then so is $B$.
\item
If $A$ is Auslander and Cohen-Macaulay, so is $B$.
\item
If $A$ is regular of dimension three, then 
$A$ is strongly noetherian and Auslander regular 
and Cohen-Macaulay.
\item
If $A$ is commutative (or PI), then $A$ is strongly 
noetherian and Auslander regular and Cohen-Macaulay.
\end{enumerate}
\end{lemma}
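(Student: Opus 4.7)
The proof plan is to treat the four parts in sequence, with (c) and (d) being corollaries of (a) and (b) applied to suitable base rings. For part (a), the idea is straightforward: for any commutative noetherian $k$-algebra $S$, there is a natural isomorphism $B\otimes_k S \cong (A\otimes_k S)[t;\tilde\sigma,\tilde\delta]$, where $\tilde\sigma$ and $\tilde\delta$ extend $\sigma$ and $\delta$ by acting as the identity on $S$. Since $A$ is strongly noetherian, $A\otimes_k S$ is noetherian, and the Hilbert basis theorem for Ore extensions \cite[Theorem 1.2.9]{MR} then gives that $(A\otimes_k S)[t;\tilde\sigma,\tilde\delta]$ is noetherian. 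Hence $B\otimes_k S$ is noetherian, establishing that $B$ is strongly noetherian.

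For part (b), the plan is to transfer the Auslander and Cohen-Macaulay properties via balanced dualizing complexes. If $R_A$ denotes the balanced dualizing complex of $A$ and it has the Auslander and Cohen-Macaulay properties, then one shows that $B$ admits a balanced dualizing complex $R_B$ induced from $R_A$ with an additional degree/integer shift reflecting the new generator $t$. This construction is available in the dualizing-complex literature on graded Ore extensions (in the style of Van den Bergh and Yekutieli--Zhang). Once $R_B$ is in hand, the Auslander property transfers because $\Ext^*_B(M,R_B)$ is computed by a spectral sequence whose $E_2$-page involves $\Ext^*_A$, which allows grade bounds for $A$-subquotients to be promoted to grade bounds for $B$-subquotients. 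The Cohen-Macaulay identity $j_{R_B}(M)+\GKdim M = d+1$ then holds because both $j$ and $\GKdim$ shift by exactly $1$ under Ore extension, using $\GKdim B = \GKdim A + 1$.

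Parts (c) and (d) follow by combining (a) and (b) with standard facts about the base rings. For (c), noetherian Artin-Schelter regular algebras of global dimension three are strongly noetherian and Auslander regular and Cohen-Macaulay, by the work of Artin-Tate-Van den Bergh together with \cite{ASZ} and results of Levasseur; so (a) and (b) yield the same properties for $B$. For (d), a connected graded commutative noetherian $k$-algebra is finitely generated over $k$ by the graded Nakayama lemma, hence strongly noetherian, and is Auslander regular and Cohen-Macaulay classically when it has finite global dimension; in the PI case the analogous properties follow from standard results on connected graded PI algebras. Applying (a) and (b) to these bases then yields the conclusions.

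The main obstacle is part (b): the delicate point is ensuring that the Auslander property, which bounds the grades of submodules of $\Ext$ groups, genuinely passes from $A$ to the Ore extension $B$ in the graded noncommutative setting. The cleanest route is to cite the relevant transfer theorem from the dualizing-complex literature directly. An alternative is to argue by a filtration: the $\delta$-adic filtration on $B$ degenerates it to the skew polynomial ring $A[t;\sigma]$, and one then invokes that the Auslander and Cohen-Macaulay properties lift from the associated graded ring to the filtered ring under a sufficiently well-behaved filtration. Either route makes (b) the only nontrivial ingredient in the proof.
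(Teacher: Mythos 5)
Your parts (a), (c) and (d) are essentially fine and match the paper's treatment: your argument for (a) is exactly the proof behind the result the paper simply cites (\cite[Proposition 4.1(b)]{ASZ}), and (c), (d) are citation-level facts about the base ring, just as in the paper (which, for (c), uses the degree-3 normal element $g$ of a three-dimensional regular algebra together with \cite[Theorems 4.24, Proposition 4.9]{ASZ} and \cite[Corollary 6.2]{Le}, and for (d) cites \cite[Proposition 4.9(5)]{ASZ} and \cite[Corollary 6.9(i)]{YZ}). In (d) the property being used is the Auslander/Cohen--Macaulay property of the balanced dualizing complex, which requires no finite global dimension hypothesis, so your caveat about finite global dimension is beside the point once the statement is read in the dualizing-complex sense of Section \ref{xxsec5}.

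The genuine gap is in (b), which is the heart of the lemma. Your primary route asserts that $B$ has a balanced dualizing complex $R_B$ induced from $R_A$, that the Auslander property transfers via an unspecified spectral sequence, and that Cohen--Macaulayness follows because ``$j$ and $\GKdim$ shift by exactly $1$''; but that shift argument only makes sense for $B$-modules induced from $A$-modules, whereas the Cohen--Macaulay identity must be checked for \emph{arbitrary} finitely generated $B$-modules, so as written this does not establish (b), and no concrete transfer theorem is identified to carry the load. Your alternative route (filter $B$ so it degenerates to $A[t;\sigma]$ and lift along the filtration) is exactly the paper's first step --- the paper regrades $t$ to degree $\deg t+1$ so that $\gr B\cong A[t;\sigma]$ and invokes \cite[Corollary 6.8]{YZ} --- but it is incomplete: lifting from the associated graded ring only helps once you know that the skew polynomial ring $A[t;\sigma]$ itself is Auslander and Cohen--Macaulay, and you never address this. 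The paper supplies precisely this missing ingredient by noting that $t$ is a normal element of $A[t;\sigma]$ with $A[t;\sigma]/(t)\cong A$ and applying the normal-element transfer theorem \cite[Theorem 5.1]{YZ} (restated as Lemma \ref{xxlem5.6}(b)). Adding that single observation completes your alternative route and recovers the paper's proof; without it, or without a precise citation replacing your spectral-sequence sketch, part (b) remains unproved.
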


\begin{proof} (a) This is \cite[Proposition 4.1(b)]{ASZ}.

(b) We construct a connected graded noetherian 
filtration on $B$ by setting new degree of $t$ to be 
$\deg t+1$, so the associated graded ring of $B$ has
the property $\gr B\cong A[t;\sigma]$. 
By \cite[Corollary 6.8]{YZ}, we only need to show the
assertion for $A[t;\sigma]$. Then it follows from 
\cite[Theorem 5.1]{YZ}.

(c) For the strongly noetherian property, we note that
there is a normal element $g$ of degree 3 such that 
$A/(g)$ is noetherian of $\GKdim$ $2$. By 
\cite[Theorem 4.24]{ASZ}, $A/(g)$ is strongly noetherian,
and by \cite[Proposition 4.9]{ASZ}, $A$ is strongly 
noetherian. The rest is \cite[Corollary 6.2]{Le}.

(d) See \cite[Proposition 4.9(5)]{ASZ} and 
\cite[Corollary 6.9(i)]{YZ}. 
\end{proof}

As seen in the last section, many algebras are Ore extensions
of regular algebras of dimension three. By
the above lemma, the following proposition
is proved.

\begin{proposition}
\label{xxprop5.3} 
Algebras $\AAA$, $\DD$, $\GG$, $\HH$, $\KK$, $\LL$, $\QQ$,
$\VV$, $\XX$ and $\YY$ are 
strongly noetherian, Auslander regular and 
Cohen-Macaulay.
\end{proposition}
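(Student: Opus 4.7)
The plan is to show that each of the ten algebras listed in the proposition is an iterated Ore extension of the two-dimensional Artin-Schelter regular algebra $k_{P}[y_1,y_2]$, and then to invoke Lemma \ref{xxlem5.2} to transfer the strongly noetherian, Auslander, and Cohen-Macaulay properties from known cases.

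First I would detect the Ore-extension structure by applying Propositions \ref{xxprop3.5} and \ref{xxprop3.6} to the matrices $\Sigma$ and $M$ recorded in Section \ref{xxsec4}. A direct inspection shows that for $\AAA$, $\DD$, $\GG$, $\HH$, $\KK$, $\LL$, $\QQ$, $\XX$, and $\YY$ the upper-right $2\times 2$ block $M_{12}$ vanishes, so Proposition \ref{xxprop3.6}(b) identifies each with an iterated Ore extension of $k_{P}[y_1,y_2]$. For $\VV$, one reads off $M_{21}=0$ and $q_{11}=0$ (since $Q=(1,0)$), so Proposition \ref{xxprop3.6}(c) gives the same conclusion.

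Writing $B = k_{P}[y_1,y_2][t_1;\sigma_1,\delta_1][t_2;\sigma_2,\delta_2]$ and setting $A' := k_{P}[y_1,y_2][t_1;\sigma_1,\delta_1]$, the intermediate algebra $A'$ is Artin-Schelter regular of global dimension three, since an Ore extension of a connected graded Artin-Schelter regular algebra is Artin-Schelter regular with global dimension increased by one. Lemma \ref{xxlem5.2}(c) then gives that $A'$ is strongly noetherian, Auslander regular, and Cohen-Macaulay. Applying Lemma \ref{xxlem5.2}(a,b) to the outer Ore extension $B = A'[t_2;\sigma_2,\delta_2]$ transports strongly noetherian, Auslander, and Cohen-Macaulay to $B$; combined with the fact that $B$ is Artin-Schelter regular by Theorem \ref{xxthm5.1}, this upgrades Auslander to Auslander regular.

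The proof is essentially bookkeeping once Propositions \ref{xxprop3.5} and \ref{xxprop3.6} have been applied. The matrix checks are immediate from the data tabulated in Section \ref{xxsec4}, and the preservation properties required are already packaged in Lemma \ref{xxlem5.2}, so no substantive obstacle is expected; the only mild subtlety is remembering that Proposition \ref{xxprop3.6} produces the iterated Ore structure over $k_{P}[y_1,y_2]$ (via the matrix $M$) rather than over $k_{Q}[x_1,x_2]$ (via $\Sigma$).
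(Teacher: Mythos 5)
Your proposal is correct and follows essentially the same route as the paper: the paper's proof likewise observes that for each of these algebras $M_{12}=0$ (or, for $\VV$, $M_{21}=0$ with $q_{11}=0$), so the algebra is an iterated Ore extension of $k_{P}[y_1,y_2]$, i.e.\ an Ore extension of a regular algebra of dimension three, and then invokes Lemma \ref{xxlem5.2} (with regularity supplied by Theorem \ref{xxthm5.1}). Your explicit two-step bookkeeping with the intermediate dimension-three algebra is just a slightly more detailed write-up of the same argument.
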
 

\begin{proof} In each of these cases, we have either
$M_{12}=0$ or $M_{21}=0$. So every algebra can be
written as an Ore extension of a regular algebra of 
dimension three. 
The assertion follows from  Lemma \ref{xxlem5.2}.
\end{proof}

The following is proved in \cite{ZZ}.

\begin{lemma}
\cite[Proposition 0.5 and Section 4]{ZZ}
\label{xxlem5.4} The algebra $\RR$ is strongly 
noetherian, Auslander regular and Cohen-Macaulay. 
\end{lemma}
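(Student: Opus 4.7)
The plan is to separate the claim into two parts: Artin--Schelter regularity, which is immediate from Theorem \ref{xxthm5.1}, and the three homological properties (strongly noetherian, Auslander, Cohen--Macaulay), which require extra work because $\RR$ is \emph{not} an iterated Ore extension (both $M_{12}$ and $M_{21}$ are nonzero), so neither Lemma \ref{xxlem5.2} nor the argument of Proposition \ref{xxprop5.3} applies. The approach I would take is the filtration-and-lifting technique underlying \cite[Section 4]{ZZ}: produce a connected graded ${\mathbb N}$-filtration on $\RR$ whose associated graded ring $\gr \RR$ is tractable (ideally a skew polynomial ring $k_{p_{ij}}[x_1,x_2,y_1,y_2]$, or at worst an iterated Ore extension of such), and then transfer the properties back to $\RR$.

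Concretely, I would proceed as follows. Writing out the six defining relations of $\RR$ (with $h=1$),
\begin{align*}
x_2x_1 &= -x_1x_2, & y_2y_1 &= -y_1y_2,\\
y_1x_1 &= x_1y_1+x_2y_1+x_1y_2, & y_1x_2 &= x_1y_2,\\
y_2x_1 &= x_2y_1, & y_2x_2 &= -x_2y_1-x_1y_2+x_2y_2,
\end{align*}
one assigns an auxiliary weight vector (e.g.\ $\deg x_1=(1,0)$, $\deg x_2=(1,\epsilon)$, $\deg y_1=(1,\eta_1)$, $\deg y_2=(1,\eta_2)$ for suitable $\epsilon,\eta_i$) refining the standard grading, in such a way that within each mixing relation exactly one monomial on the right is of maximal weight. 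The associated graded algebra then has the skew-polynomial form, and one verifies $\gr \RR \cong k_{P'}[z_1,z_2,z_3,z_4]$ for an appropriate parameter set $P'$. Having $\gr \RR$ in hand, strong noetherianness lifts by \cite[Proposition 4.9]{ASZ}, and the Auslander and Cohen--Macaulay properties lift by \cite[Corollary 6.8]{YZ} combined with \cite[Theorem 5.1]{YZ}. Artin--Schelter regularity has already been secured by Theorem \ref{xxthm5.1}.

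The hard part is finding the refining grading — equivalently, establishing a PBW basis compatible with the weights — because the mixing relations for $\RR$ contain four monomials of the same total degree, so an injudicious weight choice leaves $\gr \RR$ still with mixed relations and gives no simplification. One must choose weights so that the three ``extra'' terms $x_2y_1,\ x_1y_2,\ -x_2y_1-x_1y_2$ in the $y_ix_j$ relations all become strictly subleading compared with the diagonal term. If such a weighting cannot be arranged, an alternative is to exploit the determinant automorphism $\det\sigma$ (which sends $(x_1,x_2)\mapsto (x_2,-x_1)$, of order four) to construct $\RR$ as an ascending union of finite modules over the fixed subring $\RR^{\langle \det\sigma\rangle}$, or to apply \cite[Proposition 0.5]{ZZ} directly, which in the case of $\RR$ reduces all three properties to the corresponding properties for the base algebra $k_{-1}[x_1,x_2]$; the latter is a quantum plane and hence trivially strongly noetherian, Auslander regular, and Cohen--Macaulay by Lemma \ref{xxlem5.2}(c,d).
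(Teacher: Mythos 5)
There is a genuine gap. Your primary route (choose a refining weight so that $\gr \RR$ becomes a skew polynomial ring and then lift via \cite{ASZ} and \cite{YZ}) cannot be carried out for this algebra. If all six relations are to stay homogeneous for a refined weight, the relation $y_1x_1=x_1y_1+x_2y_1+x_1y_2$ forces $w(x_2)=w(x_1)$ and $w(y_2)=w(y_1)$, so the filtration is the trivial one and $\gr\RR\cong\RR$ — no simplification. If instead some weights are strictly unequal, then one of the purely mixed relations $y_1x_2=x_1y_2$ or $y_2x_1=x_2y_1$ becomes inhomogeneous and its leading part is a single monomial, so the candidate associated graded ring contains a relation of the form $\bar y_1\bar x_2=0$ (or $\bar y_2\bar x_1=0$). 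Such a degenerate algebra is not a quantum affine space, is not a domain, and (as with $k\langle u,v\rangle/(vu)$) is in general not even noetherian, so there is nothing good to lift back to $\RR$; moreover one would still have to verify a PBW-type condition to know that these leading terms even present $\gr\RR$. You sensed this difficulty yourself, but the proposal leaves it unresolved, and the two fallbacks do not repair it: the fixed-ring idea via $\det\sigma$ is not developed (no finiteness of $\RR$ over $\RR^{\langle\det\sigma\rangle}$ is established, and there is no reason the fixed ring is more tractable), while the appeal to \cite[Proposition 0.5]{ZZ} mischaracterizes that result as a general transfer theorem reducing the three properties of a double extension to those of the base $k_{-1}[x_1,x_2]$. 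No such general transfer is available — if it were, Propositions \ref{xxprop5.11}--\ref{xxprop5.16} and the whole case-by-case analysis of Section \ref{xxsec5} would be superfluous, and the paper states explicitly that no uniform method is known.

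For comparison, the paper's own ``proof'' of Lemma \ref{xxlem5.4} is exactly the citation \cite[Proposition 0.5 and Section 4]{ZZ}: the algebra $\RR$ is the specific example worked out in detail in Section 4 of \cite{ZZ}, where the three properties are verified directly for it (in the same spirit as the normalizing-element arguments used for the other algebras of the $\LIST$ in Section \ref{xxsec5}). So the legitimate versions of your argument are either to quote that specific result, or to imitate the technique of Propositions \ref{xxprop5.12}--\ref{xxprop5.16}: pass to the Veronese-type subalgebra via Lemma \ref{xxlem5.9}, exhibit explicit normalizing elements built from $x_1,x_2,y_1,y_2$ (your correctly computed relations, e.g. $y_1x_2=x_1y_2$ and $y_2x_1=x_2y_1$, are the right starting point for such computations), and then conclude with Lemmas \ref{xxlem5.6} and \ref{xxlem5.7}. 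As written, neither your main argument nor your fallbacks constitutes a proof.
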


It is clear that $B$ is strongly noetherian, Auslander 
regular and Cohen-Macaulay if and only if the opposite
ring $B^{op}$ is. If two double extensions are 
$\Sigma$-M-dual, then they are opposite to each other
up to equivalences (see Proposition \ref{xxprop3.7}
and Definition \ref{xxdefn4.3}). Therefore we have
the following.

\begin{lemma}
\label{xxlem5.5}
If algebras $A$ and $B$ are $\Sigma$-M-dual,
then $A$ is strongly noetherian, Auslander 
(or Auslander regular) and Cohen-Macaulay if and only 
if $B$ is.
\end{lemma}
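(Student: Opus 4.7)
The plan is to reduce to three elementary preservation statements: each of strong noetherianness, the Auslander (respectively Auslander regular) property, and the Cohen-Macaulay property is preserved under (i) passage to the opposite ring, (ii) graded twists in the sense of \cite{Zh1}, and (iii) graded algebra isomorphism. Granting these, the lemma follows by a short chain. By Proposition \ref{xxprop3.7}, the opposite algebra $A^{op}$ is again a double extension with DE-data $\{M_A, Q^\circ, P^\circ\}$, and the hypothesis that $A$ and $B$ are $\Sigma$-$M$-dual (Definition \ref{xxdefn4.3}) says precisely that this triple is equivalent to the DE-data of $B$ in the sense of Definition \ref{xxdefn3.3}. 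By Lemma \ref{xxlem3.2}(b), such an equivalence exhibits $A^{op}$ as isomorphic to a graded twist of $B$. Running the chain in either direction yields the biconditional.

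Step (iii) is tautological. For step (i), I would record that each of the three properties is symmetric in left and right by its very definition: strong noetherianness requires $A \otimes_k S$ to be noetherian for every commutative noetherian $S$, and this is symmetric in $A$ and $A^{op}$ via the identification $(A \otimes_k S)^{op} = A^{op} \otimes_k S$ (valid because $S$ is commutative); the Auslander condition on a dualizing complex, as recalled at the start of Section \ref{xxsec5}, explicitly contains its right-sided analogue as clause (b); and the Cohen-Macaulay equation $j_R(M) + \GKdim M = d$ is imposed on all finitely generated left and right modules. The balanced dualizing complex of $A^{op}$ is the natural opposite of that of $A$, so the Auslander and Cohen-Macaulay properties transfer. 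Combined with Theorem \ref{xxthm5.1}, this also handles Auslander regularity.

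For step (ii), I would invoke from \cite{Zh1} the fact that the category of graded right modules over $A$ is equivalent to that over any graded twist $A^{\gamma}$ via an exact, $\GKdim$-preserving equivalence. This equivalence transports noetherianness, $\Ext$-groups, and the balanced dualizing complex, and hence transports the Auslander and Cohen-Macaulay properties. The one point that needs a touch of care is strong noetherianness under a twist, which reduces to the compatibility $A^{\gamma} \otimes_k S \cong (A \otimes_k S)^{\gamma}$ for every commutative noetherian $S$; this holds because the twisting automorphism $\gamma$ acts trivially on the second tensor factor, and then the twist-invariance of the noetherian property applied fibrewise over $S$ closes the argument. I expect this compatibility of Zhang twist with base change to be the main (and only) non-formal ingredient, but it is routine.
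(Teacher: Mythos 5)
Your proposal is correct and follows essentially the same route as the paper: by Proposition \ref{xxprop3.7} and Definition \ref{xxdefn4.3}, $\Sigma$-$M$-duality makes $A^{op}$ a graded twist of (an algebra isomorphic to) $B$, and then one transfers the properties across opposite rings, graded twists (Lemma \ref{xxlem5.8}), and isomorphisms. The only difference is that you spell out the left-right symmetry and the twist/base-change compatibilities that the paper dismisses as clear or well-known, which is harmless.
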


So we can pair together $\Sigma$-M-dual algebras:
$(\EE,\JJ)$, $(\FF,\II)$, $(\NN,\PP)$, $(\TT,\UU)$
and $(\WW,\ZZ)$. Other algebras $\Sigma$-M-selfdual:
$\BB,\CC,\MM,\OO,\RR, \SSS$. (See Lemma \ref{xxlem5.4}
for the algebra $\RR$). Basically it reduces to ten 
algebras to work on.

The following three lemma are well-known.

\begin{lemma}
\label{xxlem5.6} 
Let $A$ be a connected graded 
algebra ane let $t$ be a homogeneous normal
element of $A$ (not necessarily a nonzerodivisor).
\begin{enumerate}
\item
\cite[Proposition 4.9]{ASZ}
$A$ is noetherian (respectively, strongly noetherian)
if and only if $A/(t)$ is.
\item
\cite[Theorem 5.1]{YZ}
Suppose $A$ is noetherian. Then $A$ is Auslander
(respectively, strongly noetherian, Cohen-Macaulay) 
if and only if $A/(t)$ is.
\end{enumerate}
\end{lemma}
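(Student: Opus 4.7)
The plan is to invoke the cited results directly, since both assertions are labeled as well-known and have explicit references in \cite{ASZ} and \cite{YZ}. Nevertheless, it is worth outlining how one would verify them in this setting, paying attention to the fact that $t$ is allowed to be a zerodivisor.

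For part (a), the ``only if'' direction is immediate: $A/(t)$ is a quotient of $A$, and strong noetherianity descends to quotients because for any commutative noetherian ring $S$ we have $(A/(t))\otimes S \cong (A\otimes S)/(t\otimes 1)$, which is a quotient of the noetherian ring $A\otimes S$. For the ``if'' direction, the plan is to exploit the $(t)$-adic filtration $A \supseteq tA \supseteq t^2A \supseteq \cdots$; each $t^nA$ is a two-sided ideal because $t$ is normal, so the associated graded $\bigoplus_{n\geq 0} t^nA/t^{n+1}A$ is a graded module over $A/(t)$. One shows that a chain of left (or two-sided) ideals in $A$ induces, via this filtration, a chain of graded $A/(t)$-submodules in the associated graded ring, and noetherianity of $A/(t)$ forces such chains to stabilize. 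This is an Artin--Rees type argument and is exactly what \cite[Proposition 4.9]{ASZ} formalizes, also covering the parametrized version for strong noetherianity.

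For part (b), assuming $A$ is noetherian, one invokes \cite[Theorem 5.1]{YZ}: the balanced dualizing complex of $A/(t)$ is obtained from that of $A$ by a base change (roughly, $R_{A/(t)} \simeq \RHom_A(A/(t), R_A)$ with an appropriate shift), and both the Auslander condition and the Cohen--Macaulay condition are preserved under this passage. In the other direction, building the balanced dualizing complex of $A$ from that of $A/(t)$ uses the filtration $\{t^nA\}$ and the fact that each successive quotient is an $A/(t)$-module, which is precisely the setup of \cite[Theorem 5.1]{YZ}.

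The main obstacle, such as it is, is notational: when $t$ is a zerodivisor one cannot use the clean short exact sequence $0 \to A(-\deg t) \to A \to A/(t) \to 0$, but must instead work with $0 \to A/\lann(t)(-\deg t) \to A \to A/(t) \to 0$ and its right-sided analogue. One has to check that both \cite[Proposition 4.9]{ASZ} and \cite[Theorem 5.1]{YZ} are stated for normal elements without any nonzerodivisor hypothesis; this is indeed the case, so the application to our setting is direct and no further argument beyond citing these results is required.
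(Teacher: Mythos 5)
Your proposal is correct and matches the paper exactly: the paper gives no argument for Lemma \ref{xxlem5.6} beyond citing \cite[Proposition 4.9]{ASZ} for part (a) and \cite[Theorem 5.1]{YZ} for part (b), both of which are indeed stated for homogeneous normal elements with no nonzerodivisor hypothesis, so the direct invocation you propose is all that is needed. Your supplementary sketches (the $(t)$-adic filtration argument and the base-change description of the balanced dualizing complex) are only heuristic and a bit loose in places, but since they are offered as background to the citations rather than as the proof itself, this does not affect the correctness of the proposal.
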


\begin{lemma}
\label{xxlem5.7} 
\cite[Proposition 3.9]{YZ}
Let $A$ and $B$ be a connected graded algebras. 
\begin{enumerate}
\item
Suppose $A\subset B$ and $B_A$ and $_AB$ are finite. 
If $A$ is strongly noetherian, Auslander and 
Cohen-Macaulay, then so is $B$.
\item
Suppose $B$ is a factor ring $A/I$. If $A$ is 
strongly noetherian, Auslander and 
Cohen-Macaulay, then so is $B$.
\end{enumerate}
\end{lemma}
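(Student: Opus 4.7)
The plan is to handle both parts simultaneously by constructing a balanced dualizing complex for $B$ out of the one for $A$, and then transporting the Auslander and Cohen--Macaulay properties across. Write $R_A$ for the balanced dualizing complex of $A$ (which exists by hypothesis) and set $R_B := \RHom_A(B,R_A)$ with its natural $B$-bimodule structure.

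For the strongly noetherian conclusion the argument is routine: for any commutative noetherian $k$-algebra $S$, the ring $A\otimes S$ is noetherian by hypothesis, and $B\otimes S$ is either a finite extension on each side (case (a)) or a quotient (case (b)) of $A\otimes S$; either way it is noetherian. Specializing to $S=k$ gives noetherianness of $B$ itself.

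Next I would verify that $R_B$ is a balanced dualizing complex over $B$. Finite injective dimension is preserved because in case (a) $B$ is finitely generated over $A$ on both sides, so $\RHom_A(B,-)$ has finite amplitude, while in case (b) $B = A/I$ admits a presentation by a bounded complex of finitely generated projective $A$-modules. The duality isomorphism $M \simeq \RHom_B(\RHom_B(M,R_B),R_B)$ for finitely generated $M$ reduces, via the tensor--hom adjunction $\RHom_B(M,\RHom_A(B,R_A)) \cong \RHom_A(M,R_A)$, to the corresponding duality over $A$; the balancedness condition on local cohomology is inherited in the same fashion.

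Once $R_B$ is known to be a balanced dualizing complex, the Auslander and Cohen--Macaulay properties pass formally via the identity $\Ext^i_B(M,R_B) \cong \Ext^i_A(M,R_A)$ of right $A$-modules, valid for every finitely generated left $B$-module $M$. This equates the grades $j_{R_B}(M) = j_{R_A}(M)$, and since $\GKdim M$ is the same whether $M$ is viewed over $A$ or over $B$ (because $B$ is finite, respectively cyclic, over $A$), the Cohen--Macaulay equality $j(M)+\GKdim M = d$ transfers. For the Auslander condition, any right $B$-submodule $N \subset \Ext^q_B(M,R_B)$ is in particular a right $A$-submodule of $\Ext^q_A(M,R_A)$, so has grade at least $q$ over $A$ and hence also over $B$. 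The main obstacle is precisely the middle step: rigorously verifying that $\RHom_A(B,R_A)$ really is a balanced dualizing complex in the connected graded noncommutative setting, including the bimodule and local-finiteness subtleties; once that is in hand, everything else is formal bookkeeping with the adjunction.
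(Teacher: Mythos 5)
The paper offers no argument of its own here: the lemma is simply quoted from \cite[Proposition 3.9]{YZ}, and the proof there follows exactly the route you outline --- both (a) and (b) are instances of a finite homomorphism $A\to B$, and one shows that $R_B:=\RHom_A(B,R_A)$ is the balanced dualizing complex of $B$ and transfers grades through the adjunction $\RHom_B(M,\RHom_A(B,R_A))\cong \RHom_A(M,R_A)$. So your strategy is the intended one, but as written it is an outline of the cited result rather than a proof: the step you defer (``verifying that $\RHom_A(B,R_A)$ really is a balanced dualizing complex'') is the entire content of the citation, and it includes the two-sided identification $\RHom_A(B,R_A)\simeq\RHom_{A^{op}}(B,R_A)$ as complexes of $B$-bimodules. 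You use this silently when you pass from ``grade at least $q$ over $A$'' to ``grade at least $q$ over $B$'' for a right $B$-submodule $N\subset\Ext^q_B(M,R_B)$: that step needs $\Ext^i_{B^{op}}(N,R_B)\cong\Ext^i_{A^{op}}(N,R_A)$, i.e.\ the right-handed adjunction applied to the \emph{same} complex $R_B$, which is only available once the bimodule symmetry is established.

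One concrete justification in your sketch is false as stated: in case (b) the quotient $B=A/I$ need not admit a bounded resolution by finitely generated projective $A$-modules (take $B=k$ over any non-regular connected graded $A$). Fortunately that claim is not needed: boundedness and finite injective dimension of $R_B$ over $B$ follow from the adjunction together with the finite injective dimension of $R_A$ over $A$, exactly as in case (a). The remaining bookkeeping is fine --- the strongly noetherian transfer via $B\otimes S$ finite over (or a quotient of) $A\otimes S$, the equality of $\GKdim$ computed over $A$ and over $B$, and hence the Cohen--Macaulay equality $j(M)+\GKdim M=d$ with the same constant $d$.
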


\begin{lemma}
\label{xxlem5.8} 
Let $B$ be a graded twist of $A$ in the
sense of \cite{Zh1}. Then $A$ is strongly noetherian, 
Auslander (or Auslander regular) and 
Cohen-Macaulay if and only if $B$ is.
\end{lemma}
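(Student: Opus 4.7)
The plan is to deduce everything from Zhang's fundamental results in \cite{Zh1} on twisting systems, which give an equivalence between $\GrMod(A)$ and $\GrMod(B)$ whenever $B=A^{\tau}$ is a graded twist by a twisting system $\tau=\{\tau_n\}$. Since this equivalence preserves Hilbert series (in particular $\GKdim M$), sends finitely generated modules to finitely generated modules, and identifies graded Ext groups up to an internal twisting action, most of the work is already done; the proposition is really a matter of assembling these facts.

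First I would handle the noetherian and strongly noetherian properties. The equivalence of graded module categories gives a bijection between graded left (resp.\ right) ideals of $A$ and of $B$, so $A$ is graded noetherian iff $B$ is; since both are connected graded, this is equivalent to being noetherian. For strongly noetherian, the key observation is that the twisting system $\tau$ on $A$ extends to a twisting system on $A\otimes S$ by acting as $\tau_n\otimes\id_S$ on the $n$-th graded component, and one checks directly from the multiplication formula $a\ast b=a\,\tau_{|a|}(b)$ that $(A\otimes S)^{\tau}\cong B\otimes S$ as graded $S$-algebras. Applying the noetherian equivalence to the pair $(A\otimes S,\,B\otimes S)$, which is again a twist pair, then yields strong noetherianness.

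Next, for the Auslander and Cohen-Macaulay properties, I would invoke the fact that the twist equivalence identifies $\Ext^i_A(M,N)$ with $\Ext^i_B(M^{\tau},N^{\tau})$ as graded vector spaces (up to an appropriate internal shift by $\tau$). In particular, the balanced dualizing complex $R_A$ of $A$ twists to the balanced dualizing complex $R_B$ of $B$: if $R_A$ exists then so does $R_B$, with $R_B=(R_A)^{\tau}$. Since $\GKdim$ is computed from the Hilbert series and Hilbert series are preserved by twisting, $\GKdim_A M=\GKdim_B M^{\tau}$. The grade $j_{R_A}(M)=\inf\{i:\Ext^i_A(M,R_A)\neq 0\}$ therefore equals $j_{R_B}(M^{\tau})$. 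Consequently the defining inequalities of the Auslander property and the equality $j(M)+\GKdim M=d$ defining the Cohen-Macaulay property transfer verbatim between $A$ and $B$.

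Finally, the Auslander regular case follows because Artin-Schelter regularity is itself preserved under graded twists (Zhang shows $A$ is AS regular iff $B$ is, with the same global dimension, as the minimal free resolution of $k$ transports across the twist). Combined with the Auslander property, this gives Auslander regularity on both sides. The main conceptual obstacle is simply being precise about how the balanced dualizing complex transforms under a twist and why grades are preserved; once that is pinned down (and it is essentially contained in \cite{Zh1} together with the general machinery of \cite{YZ}), the rest is formal.
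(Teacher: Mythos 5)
The paper offers no proof of this lemma at all: it is grouped with Lemmas \ref{xxlem5.6} and \ref{xxlem5.7} under the remark that ``the following three lemmas are well-known,'' so there is no argument of the authors' to compare yours against. Your proposal supplies the standard argument one would expect behind that citation, and it is correct in outline: noetherianness transfers because the twist functor of \cite{Zh1} preserves lattices of graded submodules (plus the classical fact that a ${\mathbb Z}$-graded ring with ACC on graded ideals is noetherian); strong noetherianness follows from your observation that $\tau_n\otimes\id_S$ is a twisting system on $A\otimes S$ with $(A\otimes S)^{\tau}\cong A^{\tau}\otimes S$, which is exactly the right trick; and the Auslander/Cohen--Macaulay transfer comes from matching dualizing complexes, GK-dimension (Hilbert series are twist-invariant) and grades. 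The one place where you should be more explicit is the step you yourself flag: the Auslander condition involves the \emph{right} $A$-module structure on $\Ext^q_A(M,R_A)$ and grades computed on the opposite side, so you need that $B^{op}$ is again a graded twist of $A^{op}$ (with the ``opposite'' twisting system), and you need a reason why the balanced dualizing complex of the twist is the twist of the balanced dualizing complex. The cleanest route for the latter is not to manipulate bimodule twists directly but to use the local-cohomology characterization: existence of $R_A$ is equivalent to the $\chi$-condition plus finite local cohomological dimension, and by local duality $\Ext^q_A(M,R_A)$ is the Matlis dual of $H^{q'}_{\mathfrak m}(M)$ for the appropriate index; since the twist equivalence preserves the torsion theory, it preserves $\Gamma_{\mathfrak m}$, hence all of these invariants, and the Auslander and Cohen--Macaulay conditions transfer verbatim. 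With that point pinned down (together with the preservation of AS-regularity and global dimension proved in \cite{Zh1} for the Auslander regular case), your argument is complete and is, in effect, the proof the paper leaves implicit.
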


The usefulness of Lemma \ref{xxlem5.8} is that if
two algebras $A$ and $B$ are twist-equivalent in the sense of 
Definition \ref{xxdefn3.3}, then $A$ is strongly noetherian, 
Auslander (or Auslander regular) and 
Cohen-Macaulay if and only if $B$ is. In particular,
when we prove a double extension is strongly noetherian, 
Auslander (or Auslander regular) and Cohen-Macaulay,
we may assume that $h=1$, which we will do in the rest
of this section.

Let $A$ be a graded ring and $n$ be a positive integer.
The $n$th {\it Veronese subring} of $A$ is defined to be
$$A^{(n)}=\oplus_{i\in {\mathbb Z}} A_{in}.$$

For later discussion we will use the following special
case of Lemma \ref{xxlem5.7}(a).

\begin{lemma}
\label{xxlem5.9} 
Let $k_Q[x_1,x_2]^{(2)}$ be the 2nd Veronese subring 
of $k_Q[x_1,x_2]$. Then the algebra $(k_Q[x_1,x_2])_P
[y_1,y_2;\sigma]$ has the noetherian property if and 
only if the subalgebra $(k_Q[x_1,x_2]^{(2)})_P
[y_1,y_2;\sigma]$ does. Same for strongly noetherian,
Auslander, Cohen-Macaulay properties.
\end{lemma}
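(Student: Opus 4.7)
The plan is to identify $B':=(k_Q[x_1,x_2]^{(2)})_P[y_1,y_2;\sigma]$ concretely as a subring of $B:=(k_Q[x_1,x_2])_P[y_1,y_2;\sigma]$ by means of a $\mathbb{Z}/2$-grading on $B$. Writing $A=k_Q[x_1,x_2]$, the graded homomorphism $\sigma$ preserves degree and hence restricts to a homomorphism $A^{(2)}\to M_2(A^{(2)})$, so $B'$ is well-defined as a double extension and embeds in $B$ as the subalgebra generated by $A^{(2)}$ and $y_1,y_2$. Assigning each $x_i$ the $\mathbb{Z}/2$-degree $\bar 1$ and each $y_i$ the $\mathbb{Z}/2$-degree $\bar 0$ makes the relations \eqref{NRx} and \eqref{NRy} homogeneous of even degree and the mixing relations \eqref{MR11}--\eqref{MR22} homogeneous of odd degree, so $B=B_{\bar 0}\oplus B_{\bar 1}$ is $\mathbb{Z}/2$-graded with $B_{\bar 0}=B'$, and both summands are $B'$-sub-bimodules.

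Since $A=A^{(2)}+x_1 A^{(2)}+x_2 A^{(2)}=A^{(2)}+A^{(2)}x_1+A^{(2)}x_2$ and $B$ is left and right free over $A$ on the monomial basis $\{y_1^{a}y_2^{b}\}_{a,b\geq 0}$, we have $B=B'+x_1 B'+x_2 B'=B'+B'x_1+B'x_2$, so $B$ is finitely generated as a $B'$-module on both sides. The direction ``$B'$ has the property $\Rightarrow$ $B$ has the property'' is then immediate from Lemma \ref{xxlem5.7}(a) for all four properties simultaneously.

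For the reverse direction, I would exploit that the projection onto the even $\mathbb{Z}/2$-component is a $B'$-bimodule retraction $\pi\colon B\twoheadrightarrow B'$. For any left ideal $J$ of $B'$, the equality $BJ\cap B'=J$ holds because $BJ=J\oplus B_{\bar 1}J$ respects the $\mathbb{Z}/2$-grading, so ascending chains of left ideals in $B'$ inject into chains in $B$; the same argument applied to $B\otimes S$ (which remains $\mathbb{Z}/2$-graded) for any commutative noetherian $S$ yields both the noetherian and strongly noetherian descent. For the Auslander and Cohen-Macaulay properties I would descend the balanced dualizing complex $R_B$: since $B'\subset B$ is a finite extension of connected graded noetherian algebras together with a $B'$-bimodule retraction, the object $\RHom_B(B',R_B)$ computes a dualizing complex for $B'$ whose Auslander and Cohen-Macaulay properties pull back from those of $R_B$. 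This last step is the main obstacle: the (strongly) noetherian halves follow from an elementary $\mathbb{Z}/2$-graded ideal argument once the bimodule splitting is in place, but descending the homological conditions requires careful manipulation of dualizing complexes using the bimodule retraction, rather than only the module-finite inclusion furnished by Lemma \ref{xxlem5.7}(a).
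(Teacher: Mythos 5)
Your handling of the direction the paper actually relies on is the same as the paper's. The paper gives no argument for Lemma \ref{xxlem5.9} beyond presenting it as a special case of Lemma \ref{xxlem5.7}(a), and that is precisely your first two paragraphs: $\sigma$ is graded, hence restricts to the Veronese subring, $B'$ sits inside $B$, and $B=B'+B'x_1+B'x_2=B'+x_1B'+x_2B'$ is a finite left and right $B'$-module, so the strongly noetherian, Auslander and Cohen--Macaulay properties ascend from $B'$ to $B$. All subsequent uses of the lemma (Lemma \ref{xxlem5.10}, Propositions \ref{xxprop5.12}--\ref{xxprop5.16}) go only in this direction, so this is the content the paper needs. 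One small slip: the right $A$-basis of $B$ furnished by Definition \ref{xxdefn1.3}(biii) is $\{y_2^{n_1}y_1^{n_2}\}$, not $\{y_1^{n_1}y_2^{n_2}\}$; since $p_{12}\neq 0$ this does not affect the finiteness claim.

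For the converse (``only if'') direction, which the paper states but never proves or uses, your $\ZZ/2$-grading with $x_i$ odd and $y_i$ even, the identification $B_{\bar 0}=B'$, and the contraction identity $BJ\cap B'=J$ (applied also to $B\otimes S$) do give a correct, elementary proof of the noetherian and strongly noetherian halves -- this is more than the paper records. The genuine gap is the Auslander/Cohen--Macaulay half: the object $\RHom_B(B',R_B)$ is not defined, because $B'$ is a subring of $B$ and a $B'$-bimodule direct summand, not a $B$-module; the functor $\RHom_B(-,R_B)$ produces dualizing complexes for quotients or finite algebras \emph{over} $B$ (the setting of Lemma \ref{xxlem5.7}), not for subrings. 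Descending Auslander and Cohen--Macaulay from $B$ to the summand $B'$ needs a different mechanism -- for instance, using that every finitely generated $B'$-module $M$ is a $B'$-direct summand of the restriction of $B\otimes_{B'}M$ (because $B=B'\oplus B_{\bar 1}$ as $B'$-bimodules), together with an existence statement for the balanced dualizing complex of $B'$, to compare grades over $B'$ and over $B$ -- and you correctly flag this step as unresolved. So your proposal fully covers what the paper itself justifies and uses, but as written it does not yet establish the lemma in the literal ``if and only if'' form.
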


The algebra $(k_Q[x_1,x_2]^{(2)})_P[y_1,y_2;\sigma]$
is a double extension of $k_Q[x_1,x_2]^{(2)}$ where
$\sigma$ is the restriction of $\sigma$ on the 
2nd Veronese subring. The next lemma is 
particularly useful for the algebras such as $\BB$.

\begin{lemma}
\label{xxlem5.10} 
If $\Sigma_{11}=\Sigma_{22}=0$, then 
the double extension is strongly noetherian, 
Auslander and Cohen-Macaulay.
\end{lemma}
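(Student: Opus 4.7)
The plan is to reduce to an iterated Ore extension over the $2$nd Veronese $A^{(2)}\subseteq A=k_Q[x_1,x_2]$. The hypothesis $\Sigma_{11}=\Sigma_{22}=0$ says each $\sigma(x_i)\in M_2(A)$ is off-diagonal; since $\sigma$ is an algebra homomorphism and a product of two off-diagonal $2\times 2$ matrices is diagonal, $\sigma(x_ix_j)=\sigma(x_i)\sigma(x_j)$ is diagonal, and more generally $\sigma(A_{2n})\subset M_2(A)$ consists of diagonal matrices for every $n\ge 0$. Consequently the restriction $\bar\sigma:=\sigma|_{A^{(2)}}\colon A^{(2)}\to M_2(A^{(2)})$ has $\bar\sigma_{12}=\bar\sigma_{21}=0$, and its diagonal entries $\bar\sigma_{11},\bar\sigma_{22}$ are algebra automorphisms of $A^{(2)}$. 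Applied with $r\in A^{(2)}$, the constraint \eqref{R3.2} (which holds because $\tilde B$ below is a genuine double extension) collapses to $p_{12}\bar\sigma_{22}\bar\sigma_{11}(r)=p_{12}\bar\sigma_{11}\bar\sigma_{22}(r)$ and therefore forces $\bar\sigma_{11}\bar\sigma_{22}=\bar\sigma_{22}\bar\sigma_{11}$.

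Form the trimmed double extension $\tilde B:=(A^{(2)})_P[y_1,y_2;\bar\sigma]$. By Lemma \ref{xxlem5.9}, $B$ is strongly noetherian, Auslander, and Cohen-Macaulay if and only if $\tilde B$ is. Because $\bar\sigma_{12}=0$, the argument in the proof of Proposition \ref{xxprop3.5}(a) applies verbatim with $A^{(2)}$ in place of $k_Q[x_1,x_2]$: first, $y_1r=\bar\sigma_{11}(r)y_1$ for $r\in A^{(2)}$ makes $A^{(2)}[y_1;\bar\sigma_{11}]$ an Ore extension of $A^{(2)}$, and then $y_2r=\bar\sigma_{22}(r)y_2$ together with $y_2y_1=p_{12}y_1y_2+p_{11}y_1^2$ makes $\tilde B$ an Ore extension of $A^{(2)}[y_1;\bar\sigma_{11}]$, the commutation $\bar\sigma_{11}\bar\sigma_{22}=\bar\sigma_{22}\bar\sigma_{11}$ being exactly what is needed for the second extension to be well-defined.

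It therefore suffices to show $A^{(2)}$ itself is strongly noetherian, Auslander, and Cohen-Macaulay; the three properties then propagate up the Ore extension tower via Lemma \ref{xxlem5.2}(a),(b) to $\tilde B$, and back to $B$ by Lemma \ref{xxlem5.9}. The parent $A=k_Q[x_1,x_2]$ is an iterated Ore extension of the polynomial ring $k[x_1]$, so Lemma \ref{xxlem5.2}(a),(b),(d) gives the three properties for $A$. The inclusion $A^{(2)}\subseteq A$ exhibits $A$ as a finitely generated left and right $A^{(2)}$-module (generated, e.g., by $\{1,x_1,x_2\}$). The strongly noetherian property descends via a noncommutative Eakin-Nagata argument applied to each base change $A^{(2)}\otimes S\subseteq A\otimes S$, and the Auslander and Cohen-Macaulay properties descend via restriction of the balanced dualizing complex of $A$ along this finite extension, following Yekutieli-Zhang.

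The main technical obstacle is this final descent step, especially for the Auslander and Cohen-Macaulay properties, where the dualizing-complex machinery must be invoked; everything else is a clean assembly of Lemma \ref{xxlem5.9}, Proposition \ref{xxprop3.5}(a), and Lemma \ref{xxlem5.2}.
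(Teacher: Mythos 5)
Your argument is correct in substance, but it routes through the middle step differently from the paper. The paper also reduces to $D:=(k_Q[x_1,x_2]^{(2)})_P[y_1,y_2;\sigma]$ via Lemma \ref{xxlem5.9}; however, instead of diagonalizing $\sigma$ it notes that $y_1a=\sigma_{12}(a)y_2$ and $y_2a=\sigma_{21}(a)y_1$ for $a\in kx_1+kx_2$, hence $y_1ab=\sigma_{12}(a)\sigma_{21}(b)y_1$ and $y_2ab=\sigma_{21}(a)\sigma_{12}(b)y_2$, so that $y_1,y_2$ are \emph{normal} elements of $D$; since $D/(y_1,y_2)\cong k_Q[x_1,x_2]^{(2)}$ has the three properties (taken as well known), Lemma \ref{xxlem5.6} lifts them to $D$ and Lemma \ref{xxlem5.9} carries them back to $B$. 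Your observation that $\sigma$ restricted to the even Veronese is diagonal is literally the same computation (a product of off-diagonal $2\times 2$ matrices is diagonal), but you exploit it by exhibiting $D$ as an iterated Ore extension of $A^{(2)}$ (Proposition \ref{xxprop3.5}(a)) and climbing up with Lemma \ref{xxlem5.2}, rather than by factoring out normal elements; either exploitation works, and yours has the small advantage of making the automorphisms explicit.

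Two soft spots deserve attention. First, when $p_{11}\neq 0$ the second Ore step needs more than $\bar\sigma_{11}\bar\sigma_{22}=\bar\sigma_{22}\bar\sigma_{11}$: compatibility of the map $y_1\mapsto p_{11}y_1^2$ with $y_1r=\bar\sigma_{11}(r)y_1$ forces $\bar\sigma_{11}=\bar\sigma_{22}$ on $A^{(2)}$. This does hold --- apply \eqref{R3.1} to even $r$, where every $\sigma_{12}$- and $\sigma_{21}$-term vanishes --- but you should either record it or simply cite Proposition \ref{xxprop3.5}(a) applied to the double extension $(A^{(2)})_P[y_1,y_2;\bar\sigma]$ as a black box, which also covers the claim that $\bar\sigma_{11}$ is an automorphism. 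Second, your treatment of the base case $A^{(2)}$ is the weakest link: downward descent of the Auslander and Cohen--Macaulay properties along the finite extension $A^{(2)}\subset A$ is not among the paper's tools (Lemma \ref{xxlem5.7}(a) goes from the subring up, not down), and the general Eakin--Nagata/dualizing-complex descent you invoke would itself need a precise statement and proof. It is also unnecessary: the properties of $k_Q[x_1,x_2]^{(2)}$ are exactly what the paper declares well known, and they can be obtained inside the paper's framework, e.g.\ by writing this Veronese subring as a three-dimensional iterated Ore extension modulo a normal regular element and applying Lemmas \ref{xxlem5.2} and \ref{xxlem5.6}. With those two repairs your proof is complete.
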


\begin{proof} In this case $\sigma_{11}=0=\sigma_{22}$.
For every $a\in kx_1+kx_2$ we have
$$y_1a=\sigma_{12}(a)y_2\quad
\text{and}\quad y_2a=\sigma_{21}(a)y_1.$$
Hence for every $a,b\in kx_1+kx_2$, 
$$y_1ab=\sigma_{12}(a)\sigma_{21}(b)y_1
\quad{\text{and}}\quad
y_2ab=\sigma_{21}(a)\sigma_{12}(b)y_2.$$
Hence $y_1$ and $y_2$ are normal elements in 
$(k_Q[x_1,x_2])^{(2)}_P[y_1,y_2;\sigma]$.

It is well-known that $(k_Q[x_1,x_2])^{(2)}$
is strongly noetherian, Auslander and 
Cohen-Macaulay. By Lemma \ref{xxlem5.6}(a), so 
is $(k_Q[x_1,x_2])^{(2)}_P[y_1,y_2;\sigma]$. The 
assertion follows from Lemma \ref{xxlem5.9}.
\end{proof}

Here is a consequence of Lemma \ref{xxlem5.10}.

\begin{proposition}
\label{xxprop5.11} 
The algebras $\BB$, $\EE$
$\JJ$, $\NN$ and $\PP$ are strongly noetherian, 
Auslander and Cohen-Macaulay.
\end{proposition}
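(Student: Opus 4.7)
The strategy is to reduce everything to Lemma \ref{xxlem5.10} by exploiting the duality observations already recorded in Section~4. Namely, by Lemma \ref{xxlem5.5} the algebras $\JJ$ and $\PP$ inherit the three properties from their $\Sigma$-$M$-duals $\EE$ and $\NN$ respectively, so it suffices to handle just $\BB$, $\EE$, and $\NN$. Since Lemma \ref{xxlem5.8} makes these properties invariant under twisting, I normalize $h=1$ throughout.

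The first and easiest step treats $\BB$ and $\EE$. A glance at the $\Sigma$-matrices displayed in Subcases 4.2.2 and 4.2.3 shows that in both cases the two diagonal blocks $\Sigma_{11}$ and $\Sigma_{22}$ are the zero $2\times2$ matrix. Consequently Lemma \ref{xxlem5.10} applies verbatim and each of $\BB$ and $\EE$ is strongly noetherian, Auslander, and Cohen-Macaulay.

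The remaining (and only slightly trickier) case is $\NN$, where $\Sigma_{11}$ and $\Sigma_{22}$ are nonzero so Lemma \ref{xxlem5.10} cannot be applied directly. The key observation is that the companion matrix $M$ displayed in Subcase 4.3.3 has zero diagonal blocks $M_{11}=M_{22}=0$, while the off-diagonal blocks
\[
M_{12}=\begin{pmatrix}-g&f\\ f&-g\end{pmatrix},\qquad
M_{21}=\begin{pmatrix}g&f\\ f&g\end{pmatrix}
\]
are both invertible, because the standing assumption for $\NN$ is $f^2\ne g^2$. The plan is therefore to switch perspective via Proposition \ref{xxprop3.6}(a): the same algebra $\NN$ is a trimmed double extension $(k_P[y_1,y_2])_Q[x_1,x_2;\alpha]$ whose defining matrix is $M^{-1}$. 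Since $M$ is block-antidiagonal with invertible blocks, $M^{-1}$ is also block-antidiagonal, i.e.\ its diagonal blocks vanish. In this reversed description the hypothesis of Lemma \ref{xxlem5.10} is satisfied, and the lemma yields the conclusion for $\NN$.

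The only genuine subtlety is noticing that Lemma \ref{xxlem5.10} is sensitive to which pair of variables is regarded as the ground ring versus the extending variables; this is exactly what Proposition \ref{xxprop3.6}(a) is designed to legitimize. Given the structural tools already developed in Sections 3--5 (the $\Sigma$/$M$ reformulation, duality, twist invariance, and Lemma \ref{xxlem5.10}), no further computation is needed, so no step presents a real obstacle beyond reading off the block structures from the matrices in Section~4.
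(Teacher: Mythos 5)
Your proof is correct, but for the pair $(\NN,\PP)$ it takes a genuinely different route from the paper. The paper observes that $\PP$ (not just $\BB$ and $\EE$) already has $\Sigma_{11}=\Sigma_{22}=0$, applies Lemma \ref{xxlem5.10} directly to $\BB$, $\EE$ and $\PP$, and then obtains $\JJ$ and $\NN$ by $\Sigma$-$M$-duality via Lemma \ref{xxlem5.5}. You instead treat $\NN$ head-on: since its $M$-matrix is block-antidiagonal with blocks of determinant $\pm(g^2-f^2)\neq 0$, the reversed description of Proposition \ref{xxprop3.6}(a), whose defining matrix is $M^{-1}$, again has vanishing diagonal blocks, so Lemma \ref{xxlem5.10} applies after swapping the roles of the $x$'s and $y$'s (the relabeling is harmless because the ground ring in the reversed picture is again a quantum plane, here $k_{-1}[y_1,y_2]$, and the Veronese argument in the lemma's proof is insensitive to which pair is the base); you then transfer to $\PP$ by duality, which is legitimate since Lemma \ref{xxlem5.5} is an equivalence. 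Both arguments are sound; the paper's is marginally shorter because it exploits the zero diagonal blocks of $\PP$'s own $\Sigma$, while yours has the modest advantage of handling $\NN$ within the same algebra (no passage to the opposite ring), at the cost of the extra observation that the inverse of a block-antidiagonal matrix is block-antidiagonal.
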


\begin{proof} 
Lemma  \ref{xxlem5.10} is applied directly to the algebra 
$\BB$, $\EE$ and $\PP$. For the algebras $\JJ$ and
$\NN$, use $\Sigma$-$M$-dual property (see Lemma \ref{xxlem5.5})
and then apply Lemma \ref{xxlem5.10}.
\end{proof}

We will leave the algebra $\CC$ to the end and work on 
other algebras first.

\begin{proposition}
\label{xxprop5.12}
The algebras $\FF$ and $\II$ are strongly noetherian, 
Auslander and Cohen-Macaulay.
\end{proposition}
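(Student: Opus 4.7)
By Proposition 3.7 and Lemma 5.5, the $\Sigma$-$M$-duality of $\FF$ and $\II$ (noted at the start of Subcase 4.3.2) reduces the claim to a single algebra; I will work with $\FF$. Lemma 5.8 further allows me to normalize $h=1$, since any twist-equivalent algebra shares the three properties.

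My plan is to exhibit a short sequence of homogeneous regular normal elements in $\FF$ (possibly after passing to the second Veronese subring, which is permissible by Lemma 5.9) whose successive quotients are either iterated Ore extensions of regular algebras of dimension at most three or commutative/PI rings. To these one applies Lemma 5.2, and then Lemma 5.6 lifts the three properties step by step back to $\FF$ itself. Note that Lemma 5.10 is not directly applicable here, since in $\FF$ the blocks $\Sigma_{11}$ and $\Sigma_{22}$ are nonzero; likewise, Proposition 3.5 and Proposition 3.6(b,c) do not apply because both $\Sigma_{12}$ and $\Sigma_{21}$ (and correspondingly $M_{12}$ and $M_{21}$) are nonzero, so $\FF$ is not presented as an iterated Ore extension in either variable. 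Hence a genuine normal element must be produced.

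The search for such an element is guided by the observations that $Q = (-1,0)$ forces $x_1^2$ and $x_2^2$ to be central in $k_Q[x_1,x_2]$, while $p^2 = -1$ forces $y_1^2$, $y_2^2$, and $(y_1y_2)^2$ to be central in $k_P[y_1,y_2]$; together with the four mixing relations \eqref{MR11}--\eqref{MR22} determined by $\Sigma$, this tightly constrains which homogeneous elements of $\FF$ can be normal, and one expects a suitable linear combination of degree-$2$ monomials in $V\otimes V$, $V\otimes V'$, and $V'\otimes V'$ (with $V = kx_1+kx_2$ and $V' = ky_1+ky_2$) — or a degree-$4$ analogue inside the second Veronese — to do the job. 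Concretely, the candidate is sought as a common eigenvector of the operators ``commute with $x_1$,'' ``commute with $x_2$,'' ``commute with $y_1$,'' and ``commute with $y_2$'' on the finite-dimensional piece $B_2$ (or $B_4^{(2)}$); the presence of the scalar $p$ with $p^2=-1$ suggests the relevant eigenspaces pair anti-symmetric combinations in the $x$'s with corresponding anti-symmetric combinations in the $y$'s.

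The main obstacle is the explicit identification of the normal element: the $4\times 4$ matrix action of $(y_1,y_2)$ on $V$ encoded in $\Sigma$ is not immediately diagonal, so one must solve a small linear system to locate the common eigenvector. Once a candidate $z$ is found, verifying normality is a finite direct calculation against the six relations \eqref{MR11}--\eqref{MR22}, \eqref{NRx}, \eqref{NRy}; regularity of $z$ follows from $\FF$ being a regular (hence domain, by Theorem 5.1 together with \cite[Theorem 3.9]{ATV2}) algebra. Passing to $\FF/(z)$ then produces a connected graded algebra of smaller GK-dimension that — by construction of $z$ — falls into the scope of Lemma 5.2, closing the induction.
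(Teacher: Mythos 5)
Your reduction steps are fine (the $\Sigma$-$M$-duality via Proposition \ref{xxprop3.7} and Lemma \ref{xxlem5.5}, and the normalization $h=1$ via Lemma \ref{xxlem5.8} are exactly how the paper begins), but the rest of the proposal is a search plan rather than a proof: the substantive content of this proposition is precisely the explicit normal elements and the verification of their normality, and you never produce them. Saying that a candidate ``is sought as a common eigenvector'' and that ``one must solve a small linear system'' defers the only nontrivial step; without exhibiting $z$ and checking it against the six relations, nothing has been proved. The paper carries this out for $\II$ (equivalent to your choice of $\FF$): from the mixing relations one derives $y_1(x_1-x_2)=(-1-q)(x_1+x_2)y_1$, $y_2(x_1+x_2)=(1+q)(x_1-x_2)y_2$, etc., and then computes that $y_1y_2$ and $y_1^2+y_2^2$ each skew-commute with $x_1$ and $x_2$ with scalar $-(1+q)^2$; since they also (skew-)commute with $y_1,y_2$ (using $y_2y_1=-y_1y_2$), they are normal elements of the subalgebra $D:=(k_P[y_1,y_2]^{(2)})_Q[x_1,x_2]$.

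A second gap is in your exit strategy. You plan for the quotient by the normal element(s) to be an iterated Ore extension handled by Lemma \ref{xxlem5.2}, but that is not what happens here: one needs \emph{two} normal elements, and the quotient $D/(y_1y_2,\,y_1^2+y_2^2)$ is not analyzed as an Ore extension at all --- it is a finite module over $k_Q[x_1,x_2]$, and the conclusion is reached through Lemmas \ref{xxlem5.6}, \ref{xxlem5.7} and \ref{xxlem5.9} (finite-module transfer plus the Veronese reduction), not Lemma \ref{xxlem5.2}. Your insistence that the normal element be a regular element is also unnecessary: Lemma \ref{xxlem5.6} is stated for normal elements that need not be nonzerodivisors, which is important since elements such as $y_1y_2$ would otherwise require a separate regularity argument. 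So the overall philosophy (normal elements, Veronese, factor, lift) is the right one and matches the paper, but as written the proposal has a genuine hole where the computation should be, and the reduction it promises at the end is not the reduction that actually closes the argument.
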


\begin{proof} Since the algebra $\FF$ is $\Sigma$-$M$-dual
to the algebra $\II$, we only consider $\II$. 

The relations of the algebra $\II$ are
$$
\begin{aligned}
x_2x_1&=q x_1x_2\\
y_2y_1&=-y_1y_2\\
y_1x_1&=-qx_1y_1-qx_2y_1+x_1y_2-qx_2y_2\\
y_1x_2&=x_1y_1+x_2y_1+x_1y_2-qx_2y_2\\
y_2x_1&=x_1y_1+qx_2y_1+qx_1y_2-qx_2y_2\\
y_2x_2&=-x_1y_1-qx_2y_1+x_1y_2-x_2y_2
\end{aligned}$$
where $q^2=-1$. We have assumed $h=1$ by using
Lemma \ref{xxlem5.8}. 
Using these relations we obtain the 
following relations
$$
\begin{aligned}
y_1(x_1-x_2)&=(-1-q)(x_1+x_2)y_1\\
y_1(x_1+qx_2)&=(1+q)(x_1-qx_2)y_2\\
y_2(x_1+x_2)&=(1+q)(x_1-x_2)y_2\\
y_2(x_1-qx_2)&=(1+q)(x_1+qx_2)y_2
\end{aligned}
$$
Using the first and the third relations above
we have
$$
\begin{aligned}
y_1y_2(x_1-x_2)&=-y_2y_1(x_1-x_2)=(1+q)y_2(x_1+x_2)y_1\\
               &=(1+q)^2(x_1-x_2)y_2y_1
=-(1+q)^2(x_1-x_2)y_1y_2
\end{aligned}
$$
A similar computation shows that
$$y_1y_2(x_1+x_2)=-(1+q)^2(x_1+x_2)y_1y_2$$
Hence $y_1y_2$ is skew-commuting with $x_i$'s
with the scalar $-(1+q)^2$. So it is a normal
element in $D:=(k_P[y_1,y_2]^{(2)})_Q[x_1,x_2]$.

Using the original relations one also sees that
$$
\begin{aligned}
(y_1+qy_2)x_1&=(-1-q)x_2(y_1+qy_2)\\
(y_1-qy_2)x_2&=(1+q)x_1(y_1-qy_2)
\end{aligned}
$$
Then we have
$$(y_1+qy_2)(y_1-qy_2)x_2=(1+q)(y_1+qy_2)x_1
(y_1-qy_2)=-(1+q)^2x_2(y_1+qy_2)(y_1+qy_2)$$
or
$$(y_1^2+y^2-2qy_1y_2)x_2=-(1+q)^2
x_2(y_1^2+y_2^2-2qy_1y_2)$$
Using the relation 
$$y_1y_2x_2=-(1+q)^2 x_2y_1y_2$$
which was proved in the last paragraph,
we obtain
$$(y_1^2+y_2^2)x_2=-(1+q)^2 x_2(y_1^2+
y_2^2).$$
By symmetry,
$$(y_1^2+y_2^2)x_1=-(1+q)^2 x_1(y_1^2+
y_2^2).$$
Hence both $y_1y_2$ and $y_1^2+y_2^2$
are normal elements in $D$. The factor
ring $D/(y_1y_2,y_1^2+y_2^2)$ is a finite module 
over $k_Q[x_1,x_2]$. The assertion follows from 
Lemmas \ref{xxlem5.6}, \ref{xxlem5.7} and \ref{xxlem5.9}.
\end{proof}

\begin{proposition}
\label{xxprop5.13}
The algebras $\MM$ and $\OO$ are strongly noetherian, 
Auslander and Cohen-Macaulay.
\end{proposition}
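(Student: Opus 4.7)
The proof parallels that of Proposition \ref{xxprop5.12}: in each algebra we exhibit homogeneous normal elements whose iterated quotients terminate in a finite extension of a subring already known to be strongly noetherian, Auslander and Cohen-Macaulay, and then invoke Lemmas \ref{xxlem5.6}--\ref{xxlem5.9}.

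For algebra $\MM$, a direct computation using the six mixing relations yields
\[
(y_1^2-y_2^2)\,x_i\;=\;(f-1)\,x_i\,(y_1^2-y_2^2),\qquad i=1,2,
\]
while $y_1^2-y_2^2$ commutes with both $y_1$ and $y_2$ inside the $y$-subring $k_{-1}[y_1,y_2]$; hence $Y:=y_1^2-y_2^2$ is a homogeneous normal (and, since $\MM$ is a domain, regular) element. Setting $A:=y_1^2+y_2^2$ and $B:=y_1y_2$, one checks that $A$ and $B$ commute and that $A^2+4B^2=Y^2$ is an identity in $k_{-1}[y_1,y_2]$; consequently in $\bar B:=\MM/(Y)$ we have $A^2\equiv-4B^2$, which is used to exhibit a second homogeneous normal element of degree $4$ in $\bar B$. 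After one further quotient the remaining ring is a finite module over $k_{-1}[x_1,x_2]$, to which Lemma \ref{xxlem5.2}(d) and then Lemmas \ref{xxlem5.6}, \ref{xxlem5.7}, \ref{xxlem5.9} apply to yield all three properties for $\MM$.

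For algebra $\OO$, the analogous starting points are the identities
\[
y_2^2\,x_i\;=\;x_i\,(y_2^2-fy_1^2)\qquad\text{and}\qquad y_i\,x_1^2\;=\;(x_1^2+f^2x_2^2)\,y_i,
\]
which express the $\sigma$-semi-invariance of $y_2^2$ and of $x_1^2$ respectively. Combined with the skew-commutation relations $y_2y_1=-y_1y_2$ and $x_2x_1=-x_1x_2$ and iterated, these relations produce a homogeneous normal element of degree $4$ playing the role of $Y$ in the analysis of $\MM$; the reduction then proceeds by the same scheme, ending at a finite extension of $k_{-1}[x_1,x_2]$.

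The main obstacle is pinning down the secondary normal element for $\OO$ in closed form, since it lives in degree $\geq 4$ and the computation is somewhat delicate; once it is written out, checking that it is a nonzerodivisor (using that $\OO$ is a domain) and carrying out the reduction via Lemmas \ref{xxlem5.6}--\ref{xxlem5.9} is routine. Degenerate values of $f$ (e.g.\ $f=0$ for $\OO$, where $x_1$ is itself already a normal regular element and $\OO/(x_1)$ is an iterated Ore extension of a Koszul regular algebra of dimension three) require only minor separate treatment via Lemma \ref{xxlem5.2}.
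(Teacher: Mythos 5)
Your starting identity for $\MM$ is correct and is a nice observation: from the mixing relations $y_1x_1=x_2y_1+x_1y_2$, $y_1x_2=fx_1y_1-x_2y_2$, $y_2x_1=x_1y_1-x_2y_2$, $y_2x_2=-x_2y_1-fx_1y_2$ one does get $(y_1^2-y_2^2)x_i=(f-1)x_i(y_1^2-y_2^2)$, so $y_1^2-y_2^2$ is a normal element of $\MM$ itself. This is a genuinely different entry point from the paper, which for $\MM$ instead finds the normal element $fx_1^2-x_2^2$ inside the Veronese extension $(k_Q[x_1,x_2]^{(2)})_P[y_1,y_2;\sigma]$ and then checks that $y_1\pm iy_2$ skew-commute with $X_1=x_2^2$, $X_2=x_1x_2$ modulo it. However, from that point on your argument for $\MM$ is not a proof: the ``second homogeneous normal element of degree $4$'' in $\MM/(y_1^2-y_2^2)$ is never written down, let alone verified to be normal, and the verification is not automatic --- for instance $y_1y_2x_1=x_2(y_1^2+y_2^2)-(1+f)x_1y_1y_2$ in $\MM$, so the obvious candidates built from $y_1y_2$ and $y_1^2+y_2^2$ are not visibly normal modulo $y_1^2-y_2^2$; the identity $A^2+4B^2=Y^2$ by itself produces no normal element. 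Likewise the claim that one further quotient is a finite module over $k_{-1}[x_1,x_2]$ is asserted, not argued.

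The $\OO$ half is in worse shape: both of your ``starting point'' identities are false. The paper's own formulas \eqref{E5.11.1} and \eqref{E5.11.4} read $y_1x_1^2=(x_1^2+f^2x_2^2)y_1+2fx_1x_2y_2$ and $y_2x_1^2=-2fx_1x_2y_1+(x_1^2+f^2x_2^2)y_2$, so $y_ix_1^2\neq(x_1^2+f^2x_2^2)y_i$ unless $f=0$; and a direct computation with the relations of $\OO$ gives $y_2^2x_1=fx_1y_1^2-2fx_2y_1y_2+x_1y_2^2$, which is not $x_1(y_2^2-fy_1^2)$. Since you then concede that ``pinning down the secondary normal element for $\OO$'' is the main obstacle and leave it undone, the $\OO$ case is simply unproven, and the proposed route rests on incorrect relations. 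For comparison, the paper's proof of Proposition \ref{xxprop5.13} proceeds by showing $x_1^2-fx_2^2$ is normal, passing to $D:=(k_Q[x_1,x_2]^{(2)})_P[y_1,y_2;\sigma]/(x_1^2-fx_2^2)$, and checking that $y_1\pm iy_2$ skew-commute with $X_1=x_1^2$, $X_2=x_1x_2$ there, so that $X_1,X_2$ and the degree-two elements of $k_P[y_1,y_2]$ generate a subalgebra $C$ by normal elements with $D$ finite over $C$; Lemmas \ref{xxlem5.6}, \ref{xxlem5.7} and \ref{xxlem5.9} then finish. An explicit chain of verified normal elements of this kind is exactly what your sketch is missing for both algebras.
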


\begin{proof} We first consider the algebra $\OO$, then 
sketch for the algebra $\MM$. 

The mixing relations of the algebra $\OO$ are
$$
\begin{aligned}
y_1x_1&=x_1y_1+fx_2y_2\\
y_1x_2&=-x_2y_1+x_1y_2\\
y_2x_1&=fx_2y_1-x_1y_2\\
y_2x_2&=x_1y_1+x_2y_2.
\end{aligned}
$$
Using these we obtain

\begin{align}
\label{E5.11.1}
y_1x_1^2&=(x_1y_1+fx_2y_2)x_1\tag{E5.11.1}\\
&=x_1(x_1y_1+fx_2y_2)+fx_2(fx_2y_1-x_1y_2)\notag\\
&=(x_1^2+f^2x_2^2)y_1+2fx_1x_2y_2.\notag
\end{align}
Similarly we have the following
\begin{align}
\label{E5.11.2}
y_1x_2^2&=(x_1^2+x_2^2)y_1+2x_1x_2y_2,
\tag{E5.11.2}\\
\label{E5.11.3}
y_1x_1x_2&=(-1-f)x_1x_2y_1+(x_1^2+fx_2^2)y_2,
\tag{E5.11.3}\\
\label{E5.11.4}
y_2x_1^2&=-2fx_1x_2y_1+(x_1^2+f^2x_2^2)y_2,
\tag{E5.11.4}\\
\label{E5.11.5}
y_2x_2^2&=-2x_1x_2y_1+(x_1^2+x_2^2)y_2,
\tag{E5.11.5}\\
\label{E5.11.6}
y_2x_1x_2&=(-x_1^2-fx_2^2)y_1+(-1-f)x_1x_2y_2.
\tag{E5.11.6}
\end{align}
A linear combination of \eqref{E5.11.1} and \eqref{E5.11.2}
gives rise to
$$y_1(x_1-fx_2^2)=(1-f)(x_1^2-fx_2^2)y_1$$
and a linear combination of \eqref{E5.11.4} and \eqref{E5.11.5}
gives rise to
$$y_2(x_1-fx_2^2)=(1-f)(x_1-fx_2^2)y_2.$$ 
Therefore $x_1^2-fx_2^2$ is a normal element in the algebra
$\OO$. By Lemma \ref{xxlem5.9} we only need to show
that $D':=k_Q[x_1,x_2]^{(2)}_P[y_1,y_2]$ has  the properties
stated in Proposition \ref{xxprop5.13}. By Lemma 
\ref{xxlem5.6}(b) we only need to show that the factor 
ring $D:=D'/(x_1^2-fx_2^2)$ has the desired properties.
Now let us introduce some new variables: $X_1=x_1^2$ and 
$X_2=x_1x_2$. Then we have $X_1X_2=X_2X_1$, $y_1y_2=-y_2y_1$; 
and, after identifying $fx_2^2$ with $X_1$ in the
algebra $D$, \eqref{E5.11.2}-\eqref{E5.11.6} imply 
$$\begin{aligned}
y_1 X_1&=(1+f)X_1 y_1+2f X_2 y_2,\\
y_1 X_2&=(-1-f) X_2 y_1+2 X_1 y_2,\\
y_2 X_1&=-2f X_2 y_1   +(1+f) X_1 y_2,\\
y_2 X_2&=-2 X_1 y_1+(-1-f) X_2 y_2.
\end{aligned}
$$
Use these relations we see that 
$$(y_1+iy_2)X_1=((1+f)X_1-2ifX_2)(y_1+iy_2),$$
and
$$(y_1+iy_2)X_2=(-2iX_1-(1+f)X_2)(y_1+iy_2).$$
So $y_1+iy_2$ is skew-commuting with $X_i$'s.
Similarly, $y_1-iy_2$ is skew-commuting
with $X_i$s. Thus $(y_1-iy_2)^2$,
$(y_1+iy_2)(y_1-iy_2)$ and $(y_1-iy_2)(y_1+iy_2)$
are all skew-commuting with $X_i$s.
Note that $\{(y_1-iy_2)^2,(y_1+iy_2)(y_1-iy_2),
(y_1-iy_2)(y_1+iy_2)\}$ is a $k$-linear basis
for $k_P[y_1,y_2]_2$. Since $y_1y_2=-y_2y_1$, 
the subalgebra of $C\subset D$ generated by 
$X_1, X_2$ and the degree two elements in 
$k_P[y_1,y_2]$ are in fact generated by five 
normal elements in $C$. By Lemma \ref{xxlem5.6} 
$C$ has the desired properties. 
Since $B$  is a finite module over $C$,
$B$ has the desired properties. This finishes
the case $\OO$. 

The proof for Case $\MM$ is similar. The details are
omitted since we can use the proof in the case
of the algebra $\OO$. Let us only give a few key
points. We work 
inside the ring $D':=k_Q[x_1,x_2]^{(2)}_P[y_1,y_2]$.
First we show that $fx_1^2-x_2^2$ is normal. 
Modulo $fx_1^2-x_2$ in $D'$, elements $y_1+iy_2$
and $y_1-iy_2$ are skew-commuting with
$X_1:=x_2^2$ and $X_2=x_1x_2$. Therefore the algebra $\MM$ is
strongly noetherian, Auslander and Cohen-Macaulay. 
\end{proof}

Near the end of the proof of Proposition \ref{xxprop5.13},
we use the fact $y_1\pm iy_2$ are skew-commuting with
$X_i$. This ideas can be used for the algebra $\SSS$.
We say an element $x$ is skew commutative
with $y_1$ and $y_2$ in side a ring $D$ if $x(ky_1+ky_2)=
(ky_1+ky_2)x$ holds in $D$. 

\begin{lemma}
\label{xxlem5.14}
Let $B$ be an algebra $(k_{Q}[x_1,x_2])_{P}[y_1,y_2;\sigma]$
with $Q=(-1,0)$. If $x_1-x_2$ and $x_1+x_2$ are
skew commutating with $y_i$'s, then $B$ is strongly 
noetherian, Auslander and Cohen-Macaulay.
\end{lemma}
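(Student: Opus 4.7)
The plan is to reduce $B$ to a quantum polynomial ring in $y_1,y_2$ by successively modding out normal elements supplied by the $x$-side. By Lemma~\ref{xxlem5.9} it suffices to prove the stated properties for the subalgebra
$C := (k_Q[x_1,x_2]^{(2)})_P[y_1,y_2;\sigma]$,
where $A^{(2)}$ denotes the second Veronese of $A := k_Q[x_1,x_2]$. The identity $x_2x_1 = -x_1x_2$ immediately gives that $A^{(2)}$ is a commutative graded ring.

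Setting $u = x_1+x_2$ and $v = x_1-x_2$, one computes
\[
u^2 = v^2 = x_1^2 + x_2^2,\quad uv = x_1^2 - 2x_1x_2 - x_2^2,\quad vu = x_1^2 + 2x_1x_2 - x_2^2,
\]
and (in characteristic not $2$) these three elements form a basis of $A_2 = A^{(2)}_1$. I would then observe that skew-commutation with $ky_1+ky_2$ is closed under multiplication: if $a$ and $b$ skew-commute with matrices $\Lambda(a),\Lambda(b)\in M_2(k)$, then
\[
(ab)y_i = a\sum_j \Lambda(b)_{ij}y_j b = \sum_{j,k}\Lambda(b)_{ij}\Lambda(a)_{jk}\,y_k\,(ab),
\]
so $ab$ skew-commutes with matrix $\Lambda(b)\Lambda(a)$. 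Applied to the hypothesis on $u$ and $v$, each of $u^2, uv, vu$ skew-commutes with $ky_1+ky_2$ in $C$. Since $A^{(2)}$ is commutative, each of these also commutes with $A^{(2)}$, hence each is normal in $C$. The required invertibility of the $\Lambda$-matrices follows from the fact that $B$ is regular by Theorem~\ref{xxthm5.1}, hence a domain, so multiplication by the nonzero degree-one elements $u$ and $v$ acts bijectively on $ky_1+ky_2$.

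The two-sided ideal $(u^2,uv,vu)$ of $C$ equals $A^{(2)}_{\geq 1}\cdot C$, since the three generators span $A^{(2)}_1$ and $A^{(2)}$ is generated in that degree. The corresponding quotient is canonically isomorphic to $k_P[y_1,y_2]$, which as an iterated Ore extension of $k$ is strongly noetherian, Auslander regular and Cohen--Macaulay by Lemma~\ref{xxlem5.2}. Three successive applications of Lemma~\ref{xxlem5.6}, one per normal element $u^2, uv, vu$ (with each remaining generator staying normal in the intermediate quotients, since the defining skew-automorphisms fix the subring $A^{(2)}$), lift these properties back to $C$, and a final appeal to Lemma~\ref{xxlem5.9} transfers them to $B$. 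The main delicate point is verifying invertibility of the $\Lambda$-matrices so that the normality of $u^2, uv, vu$ is genuinely two-sided; once this is in hand, the rest proceeds by the same quotient-by-normal-element machinery used in Proposition~\ref{xxprop5.13}.
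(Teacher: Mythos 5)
Your proposal is correct and follows essentially the same route as the paper: pass to the second Veronese double extension via Lemma \ref{xxlem5.9}, observe that $(x_1+x_2)^2$, $(x_1-x_2)(x_1+x_2)$ and $(x_1+x_2)(x_1-x_2)$ span $k_Q[x_1,x_2]_2$ and are normalizing there because skew-commutation with $ky_1+ky_2$ is preserved under products, note the quotient by them is $k_P[y_1,y_2]$, and lift the properties back using Lemmas \ref{xxlem5.6}, \ref{xxlem5.7} and \ref{xxlem5.9}. Your extra matrix-invertibility discussion is harmless but redundant, since the paper's definition of skew-commuting is already the two-sided equality $x(ky_1+ky_2)=(ky_1+ky_2)x$.
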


\begin{proof} Let 
$$\begin{aligned}
a_1&=(x_1+x_2)^2=x_1^2+x_2^2, \\
a_2&=(x_1-x_2)(x_1+x_2)=x_1^2+2x_1x_2-x_2^2\\
a_3&=(x_1+x_2)(x_1-x_2)=x_1^2-2x_1x_2-x_2^2.
\end{aligned}
$$
Hence $\{a_1,a_2,a_3\}$ is a $k$-linear basis
of $k_Q[x_1,x_2]_2$. By hypotheses, $x_1-x_2$ and 
$x_1+x_2$ are skew-commuting with $y_1,y_2$. Hence 
$a_1,a_2,a_3$ are normalizing elements in $D:=
(k_Q[x_1,x_2]^{(2)})_P[y_1,y_2;\sigma]$.
Clearly, the factor ring $D/(a_1,a_2,a_3)$ is 
isomorphic to $k_P[y_1,y_2]$, which is
strongly noetherian, Auslander and
Cohen-Macaulay. The assertion follows
from Lemmas \ref{xxlem5.6},  \ref{xxlem5.7} and \ref{xxlem5.9}.
\end{proof}

\begin{proposition}
\label{xxprop5.15} The algebras $\SSS$, $\TT$, $\UU$, $\WW$ and
$\ZZ$ is strongly noetherian, Auslander and
Cohen-Macaulay.
\end{proposition}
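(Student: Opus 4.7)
The plan is to split the five algebras into the groups $\{\SSS,\TT,\UU\}$ and $\{\WW,\ZZ\}$ and treat each group with a different tool.

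For $\SSS$ and $\TT$ I will invoke Lemma \ref{xxlem5.14}. In each case a short computation from the entries of the $\Sigma$-matrix yields identities of the form $y_j(x_1\pm x_2)=\lambda_{j,\pm}(x_1\pm x_2)y_{\pi_\pm(j)}$ for suitable scalars $\lambda_{j,\pm}\in k$ and permutations $\pi_\pm$ of $\{1,2\}$. Hence both $x_1-x_2$ and $x_1+x_2$ skew-commute with $y_1,y_2$ in the sense of Lemma \ref{xxlem5.14}, and the lemma produces the three desired properties. For $\UU$, which is $\Sigma$-$M$-dual to $\TT$ (recorded directly after the display of Algebra $\UU$), Lemma \ref{xxlem5.5} transports the conclusion.

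For $\WW$ and $\ZZ$, which are $\Sigma$-$M$-dual to each other (noted at the end of Subcase 4.4.4), it suffices to handle $\WW$. Here I mimic the two-stage reduction used for $\OO$ and $\MM$ in Proposition \ref{xxprop5.13}: work inside $D' := (k_Q[x_1,x_2]^{(2)})_P[y_1,y_2;\sigma]$, which is permissible by Lemma \ref{xxlem5.9}, and exhibit enough normal elements to reduce to a finite module over a commutative polynomial-type ring. The first candidate is $z_0 := y_1^2 + y_2^2$. Since $y_1 y_2 = -y_2 y_1$, $z_0$ is central in the $y$-subalgebra, and a direct computation with the mixing relations of $\WW$ gives $z_0 x_i = (f+1) x_i z_0$ for $i=1,2$, which is a genuine normalising relation because $f\neq -1$. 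Passing to $D := D'/(z_0)$, where $y_2^2 = -y_1^2$, I then verify that $z_1 := y_1^2 + y_1 y_2$ and $z_1' := y_1^2 - y_1 y_2$ are each normal in $D$: explicitly, $z_1 x_1 = ((f-1)x_1 + 2f x_2) z_1$ and $z_1 x_2 = ((f-1)x_2 - 2 x_1) z_1$, with the analogous identities for $z_1'$ obtained by flipping signs on the $x_2$- and $x_1$-coefficients, respectively. Because $z_1 \pm z_1'$ span $ky_1^2 + ky_1 y_2$, the further quotient $D/(z_1,z_1')$ kills every $y$-monomial of degree at least two, so it is finite as a module over the commutative ring $k[x_1,x_2]^{(2)}$, which is strongly noetherian, Auslander, and Cohen-Macaulay. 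Lemmas \ref{xxlem5.6} and \ref{xxlem5.7} then propagate these three properties back up through $D$ and $D'$, and Lemma \ref{xxlem5.9} delivers them for $\WW$.

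The crucial difficulty is locating the right normal elements in $\WW$: no single $x$-degree-one element is normal, and no isolated $y$-degree-two element of $D'$ is normal either. One must first produce $z_0$ to collapse the three-dimensional space of degree-two $y$-monomials down to a two-dimensional one, and only then does the induced right action of $x_1,x_2$ on that two-dimensional space diagonalise into the eigenlines spanned by $z_1$ and $z_1'$. By contrast, the verifications for $\SSS$ and $\TT$, together with the dualities for $\UU$ and $\ZZ$, are routine once the symmetric structure of the $\Sigma$-matrices is observed.
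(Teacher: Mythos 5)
Your treatment of $\SSS$, $\TT$ and $\UU$ is correct and is exactly the paper's argument (Lemma \ref{xxlem5.14} for $\SSS$ and $\TT$, then Lemma \ref{xxlem5.5} for $\UU$). The gap is in your direct argument for $\WW$. The element $z_0=y_1^2+y_2^2$ is indeed normal (your relation $z_0x_i=(f+1)x_iz_0$ checks out, and $z_0$ is central in $k_{-1}[y_1,y_2]$), but $z_1=y_1^2+y_1y_2$ and $z_1'=y_1^2-y_1y_2$ are \emph{not} normal in $D=D'/(z_0)$: you only verified how they pass the $x$'s, while normality also requires $z_1D=Dz_1$ against $y_1,y_2$, and this fails. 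Using $y_2y_1=-y_1y_2$ and $y_2^2\equiv-y_1^2$ one computes $z_1y_1=y_1^3-y_1^2y_2$ and $z_1y_2=-z_1y_1$, whereas $y_1z_1=y_2z_1=y_1^3+y_1^2y_2$; in the bidegree-$(0,3)$ component of $D$, which is two-dimensional with basis $y_1^3,y_1^2y_2$, the lines $z_1(ky_1+ky_2)=k(y_1^3-y_1^2y_2)$ and $(ky_1+ky_2)z_1=k(y_1^3+y_1^2y_2)$ are distinct (char $k\neq 2$), so $Dz_1\neq z_1D$. In fact left multiplication by the $y$'s carries $z_1$ into multiples of $z_1'$ and vice versa, so neither is normal individually, Lemma \ref{xxlem5.6} cannot be iterated as you propose, and no other choice in that bidegree helps: the only elements of $ky_1^2+ky_1y_2$ skew-commuting correctly with both $x_1$ and $x_2$ modulo $z_0$ are the multiples of $z_1$ and $z_1'$. (A smaller slip: your displayed relations $z_1x_i=(\cdots)z_1$ involve $x_1,x_2$, which do not lie in the Veronese-type subalgebra $D'$.)

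Two ways to close the gap. The paper's route is to prove the statement for $\ZZ$ first, which has $Q=(-1,0)$: its mixing relations give $y_1(x_1\pm x_2)=(x_1\pm x_2)(y_1\pm y_2)$ and $y_2(x_1\pm x_2)=(x_1\pm x_2)(\pm fy_1-y_2)$, so $x_1\pm x_2$ skew-commute with $ky_1+ky_2$ and Lemma \ref{xxlem5.14} applies; then $\WW$ follows by $\Sigma$-$M$-duality via Lemma \ref{xxlem5.5}. Note that you cannot apply Lemma \ref{xxlem5.14} to $\WW$ itself, since there $Q=(1,0)$. Alternatively, to stay with $\WW$, imitate the paper's treatment of the algebra $\OO$ in Proposition \ref{xxprop5.13}: the elements $u=y_1+y_2$ and $v=y_1-y_2$ skew-commute with $kx_1+kx_2$ (indeed $ux_1=(x_1+fx_2)u$, $ux_2=(x_1-x_2)u$, $vx_1=(-x_1+fx_2)v$, $vx_2=(x_1+x_2)v$, with invertible coefficient matrices since $f\neq-1$), so $u^2=z_0$, $uv$ and $vu$ are all normal in the subalgebra $C$ generated by $x_1,x_2$ and $k_{-1}[y_1,y_2]_2$ (whose even $y$-part is commutative); $C/(u^2,uv,vu)$ is a finite module over $k_Q[x_1,x_2]$ and $B$ is finite over $C$, so Lemmas \ref{xxlem5.6} and \ref{xxlem5.7} finish. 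Your $z_1,z_1'$ are, modulo $z_0$, scalar multiples of $vu$ and $uv$, which is precisely why they behave well against the $x$'s but are swapped, rather than preserved, by the $y$'s.
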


\begin{proof} First we consider the algebra $\TT$. The 
four mixing relations of the algebra $\TT$ are
$$\begin{aligned}
y_1x_1&=-x_1y_1+x_2y_1+x_1y_2+x_2y_2\\
y_1x_2&=x_1y_1-x_2y_1+x_1y_2+x_2y_2\\
y_2x_1&=x_1y_1+x_2y_1+x_1y_2-x_2y_2\\
y_2x_2&=x_1y_1+x_2y_1-x_1y_2+x_2y_2.
\end{aligned}$$
Using these we obtain that
$$y_1(x_1+x_2)=2(x_1+x_2)y_2
\quad{\text{ and}}\quad
y_2(x_1+x_2)=2(x_1+x_2)y_1.$$
Hence $x_1+x_2$ is skew-commuting with $y_i$s.
Similarly, $x_1-x_2$ is skew-commuting $y_i$s. 
The assertion for the algebra $\TT$ 
follows from Lemma \ref{xxlem5.14}.

Since the algebra $\UU$ is $\Sigma$-$M$-dual to
the algebra $\TT$. The assertion for the algebra 
$\UU$ follows from Lemma \ref{xxlem5.5}.

The proof for the algebra $\SSS$ is very similar to 
the proof for the algebra $\TT$. So the details are
omitted.

For the algebra $\ZZ$, we use Lemma \ref{xxlem5.14}
again. Note that $Q=(-1,0)$. The mixing relations 
of this algebra imply that
$$
\begin{aligned}
y_1(x_1+x_2)&=(x_1+x_2)(y_1+y_2)\\
y_2(x_1+x_2)&=(x_1+x_2)(fy_1-y_2)\\
y_1(x_1-x_2)&=(x_1-x_2)(y_1-y_2)\\
y_2(x_1-x_2)&=(x_1-x_2)(-fy_1-y_2).
\end{aligned}
$$
These relations show that $x_1-x_2$ and
$x_1+x_2$ are skew-commuting with $y_i$.
The assertion follows from Lemma \ref{xxlem5.14}.

Since the algebra $\WW$ is $\Sigma$-$M$-dual to
the algebra $\ZZ$. The assertion for $\WW$ follows 
from Lemma \ref{xxlem5.5}.
\end{proof}

The last case to deal with is the algebra $\CC$, 
which is slightly more complicated.

\begin{proposition}
\label{xxprop5.16} 
The algebra $\CC$ is strongly noetherian, Auslander and
Cohen-Macaulay.
\end{proposition}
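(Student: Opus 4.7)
The plan is to follow the same general pattern used for Propositions~\ref{xxprop5.12} and \ref{xxprop5.13}: identify a regular sequence of homogeneous normal elements (possibly in a Veronese subring $\CC^{(n)}$), pass to the quotient, show that the resulting algebra is a finite module over a well-understood ring (commutative or skew-polynomial), and then invoke Lemmas~\ref{xxlem5.6}, \ref{xxlem5.7}, \ref{xxlem5.9}, together with Lemma~\ref{xxlem5.8} to reduce to the case $h=1$.

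To look for candidates, I would first expand the mixing relations \eqref{MR11}--\eqref{MR22} for algebra $\CC$ using $p^2 + p + 1 = 0$ and $p^3 = 1$. A direct computation gives the clean identity
\[
 y_1(x_1 - x_2) = (p-1)(x_1 - p^2 x_2)\, y_1,
\]
so $y_1$ cyclically rotates the three linear forms $x_1 - x_2$, $x_1 - p x_2$, $x_1 - p^2 x_2$ (up to passing between $y_1$ and $y_2$). This strongly suggests that the natural normal objects live in degree $3$ and are built from the cyclic ``norm'' $(x_1 - x_2)(x_1 - p x_2)(x_1 - p^2 x_2)$ of $x_1$ under the $\mathbb{Z}/3$-action, together with its $y$-counterpart (which exists by the $\Sigma$-$M$-selfduality of $\CC$). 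Since $x_1^3, x_2^3, y_1^3, y_2^3$ already commute among themselves in the respective subalgebras $k_Q[x_1,x_2]$ and $k_P[y_1,y_2]$, working inside the third Veronese $\CC^{(3)}$ (or, at worst, $\CC^{(6)}$) should make the normality computation combinatorially cleaner.

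Concretely, my plan is:
(i) Prove that a suitable cubic $f \in \CC_3$ (built symmetrically from $\{x_1 - p^j x_2\}_{j=0,1,2}$) is normal in $\CC$ by using the rotation identity above together with the analogous identity for $y_2$ and the relations $y_2y_1 = p y_1 y_2$, $x_2 x_1 = p x_1 x_2$.
(ii) Use $\Sigma$-$M$-selfduality (Proposition~\ref{xxprop3.7}, Definition~\ref{xxdefn4.3}) to obtain a second normal element $g \in \CC_3$ built from $\{y_1 - p^j y_2\}$.
(iii) Show $\{f,g\}$ is a regular sequence, and that $\CC/(f,g)$ is a finite module over a commutative (or skew-commutative) noetherian ring generated by a few normal elements in higher Veronese degree; these last elements can be exhibited by the same method as in the proof of Proposition~\ref{xxprop5.13} after passing to the appropriate Veronese. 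Then Lemmas~\ref{xxlem5.6}--\ref{xxlem5.9} close the argument.

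The main obstacle is step (i): verifying normality of $f$. The noncommutativity $x_2 x_1 = p x_1 x_2$ and the fact that the rotation identity for $y_1$ sends the linear form back into a $y_1$-term while the analogous identity for $y_2$ introduces an off-diagonal $y_1$-term (as in the computation $y_2(x_1 - x_2) = -3p^2 x_2 y_1 + (p-1)(x_1 - x_2) y_2$) mean that $f$ is not normal until one reaches exactly the right symmetric combination in the third Veronese. This is a finite but tedious combinatorial identity check, facilitated by the relations $1+p+p^2 = 0$ and $p^3 = 1$, and ultimately by the diagonal form of $\det\sigma$ recorded for $\CC$ in Subsection~\ref{xxsubsec4.1}, which controls how degree-two $y$-monomials skew-commute with cubics in $x_1,x_2$.
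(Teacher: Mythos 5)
Your general strategy (degree-three normal elements plus Lemmas \ref{xxlem5.6}--\ref{xxlem5.9}, with Lemma \ref{xxlem5.8} to set $h=1$) is the same one the paper uses, and the rotation identities you isolated are exactly the paper's starting point. Step (i) is in fact achievable, but only for the correct ordering/combination: one has
$(x_1-x_2)(x_1-p^2x_2)(x_1-px_2)=(x_1-p^2x_2)(x_1-px_2)(x_1-x_2)=x_1^3-x_2^3$,
and a direct check with the mixing relations gives $y_i(x_1^3-x_2^3)=(p-1)^3(x_1^3-x_2^3)y_i$ for $i=1,2$, so $x_1^3-x_2^3$ is normal in $\CC$ (no Veronese subring is needed for this); the ordering you wrote expands instead to $x_1^3-3p^2x_1^2x_2+3x_1x_2^2-x_2^3$, which is not the right element. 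Likewise the $y$-side element is $y_1^3-y_2^3$, which the paper verifies directly via $(y_1-y_2)^3=(y_1-py_2)^3=(y_1-p^2y_2)^3=y_1^3-y_2^3$, so appealing to $\Sigma$-$M$-selfduality in step (ii) is fine but not needed. Also, Lemma \ref{xxlem5.6} does not require a regular sequence, so that part of step (iii) is superfluous.

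The genuine gap is the rest of step (iii). After factoring out the two cubic norms $f=x_1^3-x_2^3$ and $g=y_1^3-y_2^3$, the quotient $\CC/(f,g)$ still has Gelfand-Kirillov dimension $2$: modulo $g$ alone the subalgebra $k_P[y_1,y_2]/(y_1^3-y_2^3)$ is infinite-dimensional, so $\CC/(f,g)$ is finite neither over $k_Q[x_1,x_2]$ nor over $k_P[y_1,y_2]$, and Lemma \ref{xxlem5.7} cannot yet be invoked. You acknowledge that further normal elements are needed but offer no candidate and no argument beyond an analogy with Proposition \ref{xxprop5.13}, whose mechanism (a normal quadric in the $x$'s followed by skew-commuting linear $y$-forms in a Veronese quotient) does not transfer in any evident way. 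This is precisely where the paper does its real work: it shows that $y_1^2y_2$ skew-commutes with $x_1,x_2$ (with scalar $(p-1)^2(1-p^2)$, using $y_2y_1^2=p^2y_1^2y_2$) and with $y_1,y_2$, hence is normal, and then factors out the two $y$-elements $y_1^2y_2$ and $y_1^3-y_2^3$; this kills the $y$-part to a finite-dimensional algebra, so the quotient is a finite module over (an image of) $k_Q[x_1,x_2]$ and Lemmas \ref{xxlem5.6}, \ref{xxlem5.7} and \ref{xxlem5.9} finish the proof. Your plan can be completed along the same lines, but only after supplying such an additional normal element (e.g.\ the image of $y_1^2y_2$, or its $x$-counterpart), which is the key missing ingredient; note also that Lemma \ref{xxlem5.9} as stated concerns only the second Veronese of the $x$-subalgebra, so the appeal to ``the appropriate Veronese'' would need a (routine) variant via Lemma \ref{xxlem5.7}(a).
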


\begin{proof}
First we list the relations of the algebra $\CC$ as
follows:
$$y_2y_1=py_1y_2\quad\text{and}\quad
x_2x_1=px_1x_2$$
where $p^2+p+1=0$ (or $p^3=1$ and $p\neq 1$); and
$$
\begin{aligned}
y_1x_1&=-x_1y_1+p^2 x_2y_1+x_1y_2-px_2y_2\\
y_1x_2&=-px_1y_1+  x_2y_1+x_1y_2-px_2y_2\\
y_2x_1&=-px_1y_1-2p^2 x_2y_1+px_1y_2-px_2y_2\\
y_2x_2&=-px_1y_1+p^2 x_2y_1+x_1y_2-x_2y_2.
\end{aligned}
$$
Using these relations we obtain the following
$$
\begin{aligned}
y_1(x_1-x_2)&=(p-1)(x_1-p^2x_2)y_1\\
y_1(x_1-p^2x_2)&=(1-p^2)(x_1-px_2)y_2\\
y_2(x_1-px_2)&=(p^2-p)(x_1-x_2)y_1.
\end{aligned}
$$
These three relations are used in the following computations:
$$
\begin{aligned}
y_1^2y_2 (x_1-px_2)&=(p^2-p)y_1^2(x_1-x_2)y_1\\
&=(p^2-p)(p-1)y_1(x_1-p^2x_2)y_1^2\\
&=(p^2-p)(p-1)(1-p^2)(x_1-px_2)y_2y_1^2\\
&=(p-1)^2(1-p^2)(x_1-px_2)y_1^2y_2.
\end{aligned}
$$
The same three relations also imply 
$$y_2y_1^2(x_1-x_2)=(p-1)^2(1-p^2)(x_1-x_2)
y_2y_1^2.$$
Since $y_2y_1^2=p^2y_1^2y_2$, $y_1^2y_2$ is 
skew-commuting with $x_1$ and $x_2$ with scalar 
$(p-1)^2(1-p^2)$. Since $y_1^2y_2$ is skew-commuting
with $y_1$ and $y_2$, it is a normalizing element in
the algebra $\CC$. 

Next we will find another normalizing element in
degree 3.  Using the four mixing relations we obtain three 
other relations:
$$
\begin{aligned}
(y_1-y_2)x_2&=(1-p^2)x_2(y_1-py_2)\\
(y_1-py_2)x_2&=(p^2-p)x_1(y_1-p^2y_2)\\
(y_1-p^2y_2)x_1&=(p-1)x_2(y_1-y_2).
\end{aligned}
$$
The first relations of these three shows that
$$(y_1-y_2)^3x_2=(1-p^2)^3x_2(y_1-py_2)^3.$$
It is easy to show that
$$(y_1-y_2)^3=(y_1-py_2)^3=(y_1-p^2y_2)^3=y_1^3-y_2^3.$$
Thus $y_1^3-y_2^3$ is skew-commuting with $x_2$
with scalar $(1-p^2)^3$. Using all three relations
we obtain that
$$\begin{aligned}
(y_1-py_2) & (y_1-y_2)(y_1-p^2y_2)x_1 \qquad\qquad\qquad\qquad \\
&=(p-1)(y_1-py_2)(y_1-y_2)x_2(y_1-y_2)\\
&=(p-1)(1-p^2)(y_1-py_2)x_2(y_1-py_2)(y_1-y_2)\\
&=(p-1)(1-p^2)(p^2-p)x_1(y_1-p^2y_2)(y_1-py_2)(y_1-y_2).
\end{aligned}
$$
An easy computation shows that
$$(y_1-py_2)(y_1-y_2)(y_1-p^2y_2)=(y_1-p^2y_2)(y_1-py_2)(y_1-y_2)=
y_1^3-y_2^3.$$
Therefore $y_1^3-y_2^3$ is skew-commuting with
$x_1$. Since $y_1^3-y_2^3$ is commuting with $y_i$'s.
We conclude that $y_1^3-y_2^3$ is a normalizing  
element in the algebra $\CC$. After factoring out 
both elements $y_1^2y_2$ and $y_1^3-y_2^3$ in $\CC$,
the factor ring is finite module over $k_Q[x_1,x_2]$.
By Lemmas \ref{xxlem5.6} ,\ref{xxlem5.7} and \ref{xxlem5.9}, the
algebra $\CC$ is strongly noetherian, Auslander and
Cohen-Macaulay.
\end{proof}

Combining these propositions we have

\begin{theorem}
\label{xxthm5.17}
The algebras $\AAA$ to $\ZZ$ are strongly noetherian, Auslander and
Cohen-Macaulay.
\end{theorem}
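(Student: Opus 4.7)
The plan is to assemble Theorem \ref{xxthm5.17} by combining the case-by-case results (Propositions \ref{xxprop5.3}, \ref{xxprop5.11}, \ref{xxprop5.12}, \ref{xxprop5.13}, \ref{xxprop5.15}, \ref{xxprop5.16}) together with Lemma \ref{xxlem5.4}. Since all 26 algebras in the $\LIST$ have already been distributed across these preceding statements, the proof itself should simply be a one-line citation: the algebras $\AAA$, $\DD$, $\GG$, $\HH$, $\KK$, $\LL$, $\QQ$, $\VV$, $\XX$, $\YY$ are covered by Proposition \ref{xxprop5.3}; the algebra $\RR$ by Lemma \ref{xxlem5.4}; the algebras $\BB$, $\EE$, $\JJ$, $\NN$, $\PP$ by Proposition \ref{xxprop5.11}; the pair $\FF$, $\II$ by Proposition \ref{xxprop5.12}; the pair $\MM$, $\OO$ by Proposition \ref{xxprop5.13}; the algebras $\SSS$, $\TT$, $\UU$, $\WW$, $\ZZ$ by Proposition \ref{xxprop5.15}; and the algebra $\CC$ by Proposition \ref{xxprop5.16}. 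A quick tally confirms that $10+1+5+2+2+5+1=26$, so nothing is missed.

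Conceptually, what is being assembled is the recurring principle behind the entire section: find a chain of homogeneous normal (or normalizing) elements inside some convenient Veronese subring so that the factor algebra becomes small enough to be handled directly, then propagate the desired properties back up via Lemmas \ref{xxlem5.6}, \ref{xxlem5.7}, \ref{xxlem5.8}, and \ref{xxlem5.9}. For algebras that are iterated Ore extensions of three-dimensional regular algebras (those with $M_{12}=0$ or $M_{21}=0$), the conclusion comes immediately from Lemma \ref{xxlem5.2}. For the remaining ones, the search for normal elements is organized either by hypothesis on $\Sigma$ (as in Lemma \ref{xxlem5.10}, which applies when $\Sigma_{11}=\Sigma_{22}=0$) or by the skew-commutation trick of Lemma \ref{xxlem5.14} (using that $x_1\pm x_2$ normalize $ky_1+ky_2$). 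The $\Sigma$-$M$-duality pairing (Lemma \ref{xxlem5.5}) cuts the casework roughly in half by identifying $(\EE,\JJ)$, $(\FF,\II)$, $(\NN,\PP)$, $(\TT,\UU)$, $(\WW,\ZZ)$.

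Since no new argument is required beyond quoting the preceding statements, there is no genuine obstacle left for this final theorem; the hard work has already been carried out in the sequence of propositions. The only bookkeeping hazard is to make sure the enumeration of the 26 families is exhaustive and non-overlapping when citations are marshalled. In particular, one should double-check that every algebra in the classification of Section 4 has been assigned to exactly one of the above results, and that the Artin-Schelter regularity part of the conclusion (implicit in ``Auslander regular'') is supplied by Theorem \ref{xxthm5.1} rather than being re-derived case by case.

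\begin{proof}[Proof of Theorem \ref{xxthm5.17}]
Artin-Schelter regularity of each algebra in the $\LIST$ is immediate from Theorem \ref{xxthm5.1}. The strongly noetherian, Auslander, and Cohen-Macaulay properties are established by combining Propositions \ref{xxprop5.3}, \ref{xxprop5.11}, \ref{xxprop5.12}, \ref{xxprop5.13}, \ref{xxprop5.15}, and \ref{xxprop5.16}, together with Lemma \ref{xxlem5.4}, which together cover the 26 families $\AAA,\BB,\ldots,\ZZ$.
\end{proof}
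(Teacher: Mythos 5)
Your proof is correct and is exactly the paper's argument: the paper introduces Theorem \ref{xxthm5.17} with the phrase ``Combining these propositions we have,'' i.e.\ it is precisely the assembly of Propositions \ref{xxprop5.3}, \ref{xxprop5.11}, \ref{xxprop5.12}, \ref{xxprop5.13}, \ref{xxprop5.15}, \ref{xxprop5.16} and Lemma \ref{xxlem5.4}, and your tally of the 26 families is accurate. (Your aside about Artin-Schelter regularity via Theorem \ref{xxthm5.1} is harmless but not needed, since the theorem asserts only the strongly noetherian, Auslander and Cohen-Macaulay properties.)
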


We are (almost) ready to prove Theorem \ref{xxthm0.1}. We refer
to \cite[Section 6]{YZ} for the definitions related to 
noetherian filtrations.

\begin{lemma}
\label{xxlem5.18}
Let $A$ be a filtered algebra such that the associated graded
ring $\gr A$ is strongly noetherian, Auslander and
Cohen-Macaulay. Then so is $A$.
\end{lemma}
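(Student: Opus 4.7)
The plan is to treat the three properties—strongly noetherian, Auslander, and Cohen-Macaulay—separately, exploiting the general principle that each of these lifts from the associated graded ring to the filtered ring through a sufficiently nice noetherian filtration.

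First, for the strongly noetherian property, I would argue directly. Let $S$ be any commutative noetherian $k$-algebra. The filtration $\{F_n A\}$ on $A$ induces a natural filtration $\{F_n A \otimes_k S\}$ on $A \otimes_k S$, and the associated graded ring of this filtration is $(\gr A) \otimes_k S$. Since $\gr A$ is strongly noetherian, $(\gr A) \otimes_k S$ is noetherian, and hence $A \otimes_k S$ is noetherian by the standard fact that a filtered ring whose associated graded is noetherian is itself noetherian (see, e.g., \cite[Theorem 1.6.9]{MR}). Thus $A$ is strongly noetherian.

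Second, for the Auslander and Cohen-Macaulay properties, I would invoke the transfer theorem for balanced dualizing complexes developed in \cite[Section 6]{YZ}. The argument there proceeds by passing through the Rees algebra $\widetilde{A} = \bigoplus_n F_n A$, which specializes simultaneously to $\gr A$ (modulo the central regular element identifying the Rees degree) and to $A$ itself (by inverting that element). An Auslander (respectively Cohen-Macaulay) balanced dualizing complex on $\gr A$ first lifts to one on $\widetilde{A}$ and then descends to one on $A$, with the relevant grade inequalities preserved under both the flat base change to $\widetilde{A}$ and the specialization. This is essentially the content of the main filtration-transfer results of that section.

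The main obstacle is bookkeeping rather than substance: one must check that the filtration at hand satisfies the technical hypotheses required by \cite[Section 6]{YZ}—exhaustiveness, separatedness, and being a noetherian filtration in their sense—so that the Rees algebra is well-behaved and the dualizing complexes transfer cleanly between $\gr A$, $\widetilde{A}$, and $A$. In every application of this lemma in the paper the filtration will be a standard connected-graded filtration with finitely generated $\gr A$, so these hypotheses are automatic, and the lemma reduces to a direct invocation of the Yekutieli--Zhang machinery together with the elementary strongly-noetherian verification above.
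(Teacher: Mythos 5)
Your proposal is correct and takes essentially the same route as the paper: the paper simply cites \cite[Proposition 4.10]{ASZ} for the strongly noetherian part (whose proof is precisely the direct filtration argument you give, inducing a filtration on $A\otimes S$ with associated graded $(\gr A)\otimes S$) and \cite[Corollary 6.8]{YZ} for the Auslander and Cohen-Macaulay properties, which is exactly the Rees-algebra transfer machinery you describe. The only difference is that you unpack the first citation into its standard proof, and the filtration hypotheses you rightly flag are automatic in the paper's application, where the filtration comes from \cite[Lemma 4.4]{ZZ}.
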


\begin{proof} By \cite[Proposition 4.10]{ASZ}, $A$ is strong 
noetherian. The rest follows from \cite[Corollary 6.8]{YZ}.
\end{proof}

\begin{proof}[Proof of Theorem \ref{xxthm0.1}]
The regularity follows from Theorem \ref{xxthm5.1}.

(c) This is Proposition \ref{xxprop4.4}.

(a) Let $B$ be a double  extension $A_{P}[y_1,y_2;\sigma,\delta,
\tau]$ where $A=k_{Q}[x_1,x_2]$. If $B$ is an iterated Ore 
extension of $A$, then the assertion follows from 
Lemma \ref{xxlem5.2}. Now we assume $B$ is not an iterated
Ore extension of $A$. By \cite[Lemma 4.4]{ZZ}, $B$ has a
filtration such that $\gr B$ is the trimmed double
extension. By Lemma \ref{xxlem5.18}, it suffices to show 
that the trimmed Ore extension is strongly noetherian, 
Auslander and Cohen-Macaulay. By part (c),
since $B$ is not an iterated Ore extension of $A$, 
the trimmed double extension is one of the algebras $\AAA$ 
to $\ZZ$. Therefore the assertion follows from Theorem 
\ref{xxthm5.17}.

(b) This is \cite[Proposition 1.4]{LPWZ}.
\end{proof}

Recall that a regular algebra $B$ is called a normal extension if 
there is a non-zero-divisor $x$ of degree 1 such that $B/(x)$ 
is Artin-Schelter regular. Many of the algebras in the $\LIST$ 
are not isomorphic to either Ore extensions or normal extension 
of regular algebras of dimension three. This can be 
proved by using the method in the proof of 
\cite[Lemmas 4.9 and 4.10]{ZZ}. 

\section*{Acknowledgments}

James J. Zhang is supported by the National Science 
Foundation of USA and the Royalty Research Fund of the
University of Washington.

\providecommand{\bysame}{\leavevmode\hbox
to3em{\hrulefill}\thinspace}
\providecommand{\MR}{\relax\ifhmode\unskip\space\fi MR
}
\providecommand{\MRhref}[2]{%
 
\href{http://www.ams.org/mathscinet-getitem?mr=#1}{#2}
}
\providecommand{\href}[2]{#2}

\end{document}